\newtheorem{thm}{Theorem}
\newtheorem{lem}{Lemma}
\newtheorem{cor}{Corollary}
\newtheorem{prop}{Proposition}
\newtheorem{rem}{Remark}
\newtheorem{defi}{Definition}
\newcommand{\eps}{\varepsilon}
\newcommand{\R}{\mathbb{R}}
\newcommand{\C}{\mathbb{C}}
\newcommand{\N}{\mathbb{N}}
\newcommand{\Z}{\mathbb{Z}}
\newcommand{\fu}{\mathfrak{u}}
\renewcommand{\Re}{{\rm Re}\,}
\renewcommand{\Im}{{\rm Im}\,}
\renewcommand{\div}{{\rm div}\,}
\newcommand{\Supp}{{\rm Supp}\,}
\newcommand{\Int}{\displaystyle \int}
\def\dq{\Delta_q}
\def\tilde{\widetilde}
\def\hat{\widehat}
\begin{document}
\title[On the linear wave regime of the Gross-Pitaevskii equation]
{On the linear wave regime of the Gross-Pitaevskii equation}

\author[F. Bethuel]{Fabrice B\'ethuel}
\address[F. Bethuel]
{ Laboratoire J.-L. Lions UMR 7598, Universit{\'e} Pierre et Marie Curie,
175 rue du Chevaleret, 75013 Paris, France}
\email{bethuel@ann.jussieu.fr}
\author[R. Danchin]{Rapha\"el Danchin}
\address[R. Danchin]
{Universit\'e Paris-Est, LAMA, UMR 8050,
 61 avenue du G\'en\'eral de Gaulle,
94010 Cr\'eteil Cedex, France.}
\email{danchin@univ-paris12.fr}
\author[D. Smets]{Didier Smets}
\address[D. Smets]
{Laboratoire J.-L. Lions UMR 7598, Universit{\'e} Pierre et Marie Curie,
175 rue du Chevaleret, 75013 Paris, France}
\email{smets@ann.jussieu.fr}

\date\today
\begin{abstract}
We study a long wave-length asymptotics for the Gross-Pitaevskii equation corresponding to perturbation of a constant state of modulus one.  We exhibit lower bounds on the first occurence of possible zeros (vortices) and compare the solutions with the corresponding solutions to the linear wave equation or variants. The results rely on the use of the Madelung transform, which yields the hydrodynamical form of the Gross-Pitaevskii equation,  as well as  of an augmented system.     

\end{abstract}

\maketitle

\section{Introduction}\label{sect:intro}

The dynamics of the Gross-Pitaevskii equation
$$
i\frac{\partial \Psi}{\partial t} + \Delta \Psi = \Psi (|\Psi|^2-1)  
\leqno{(GP)}
$$
on $\R^N\times \R,$ for $N\geq 1$, with non-trivial limit conditions at infinity, 
exhibits a remarkable variety of special solutions and regimes.  The purpose
of this paper is to investigate one of these regimes, namely   perturbations
of constant maps of modulus one, which are obvious stationary solutions, in a
long-wave asymptotics.  In particular, we restrict ourselves to solutions
$\Psi$ which do not vanish, so that we may write $$ \Psi = \rho
\exp(i\varphi). $$ In the variables $(\rho,\varphi)$, $(GP)$ is turned into
the system  $$ \left\{
\begin{array}{l}
\displaystyle
\partial_t \rho + 2 \nabla \varphi\cdot\nabla \rho + \rho \Delta \varphi = 0,
\\[5pt]
\displaystyle
\rho \partial_t \varphi + \rho |\nabla \varphi|^2 - \Delta \rho =
 \rho (1-\rho^2).
\end{array}
\right.
$$
Setting $u=2\nabla \varphi$ leads to the 
hydrodynamical form of $(GP)$
\begin{equation}\label{eq:hydro}
\left\{
\begin{array}{l}
\displaystyle
\partial_t \rho^2 + {\rm div}(\rho^2 u) = 0,
\\
\displaystyle
\partial_t u + u\cdot\nabla u + 2 \nabla \rho^2 = 2\nabla \bigl(
\frac{\Delta \rho}{\rho}\bigr).
\end{array}
\right.
\end{equation}
If one neglects the right-hand side of the second equation, which is often referred to as the quantum pressure, system \eqref{eq:hydro} is similar to the Euler equation for a compressible fluid, with pressure law $p(\rho) = 2\rho^2$. In particular, the speed of sound waves near the constant solution 
$\Psi = 1$, that is $\rho = 1$ and $u = 0$, is given by
$$c_s = \sqrt{2}.$$

In order to specify the nature of our perturbation as well as of our long-wave
asymptotics we introduce a small parameter $\eps>0$ and set
\begin{equation}\label{scale}
\left\{
\begin{array}{l}
\displaystyle
\rho^2(x,t) = 1 + \frac{\eps}{\sqrt{2}}a_\eps(\eps x,\eps t),
\\
\displaystyle
u(x,t) = \eps u_\eps(\eps x, \eps t),
\end{array}
\right.
\end{equation}
so that system \eqref{eq:hydro} translates into 
\begin{equation}\label{eq:dynaslow}
\left\{
\begin{array}{l}
\displaystyle
\partial_t a_\eps + \sqrt{2}\,{\rm div}\,u_\eps = -\eps{\rm div}(a_\eps
u_\eps), \\
\displaystyle
\partial_t u_\eps + \sqrt{2}\,\nabla a_\eps = \eps \left(-u_\eps\cdot\nabla
u_\eps + 2 \nabla \biggl(
\frac{\Delta\sqrt{\sqrt 2+\eps a_\eps}}
{\sqrt{\sqrt 2+\eps a_\eps}}\biggr)\right).
\end{array}
\right.
\end{equation}
The l.h.s. of this system corresponds to the linear wave operator with speed
$\sqrt{2},$ whereas the r.h.s. contains terms of higher order derivatives, which
correspond to the dispersive nature of the Schr\"odinger equation (with infinite
speed of propagation).

Our first main result provides a lower bound
for the first occurrence of a zero of $\Psi.$
\begin{thm}\label{thm:un}
	Let $s> 1+\frac{N}{2}.$ There exists $C\equiv C(s,N)$ such that for any initial
	datum $(a^0_\eps,u^0_\eps)$ verifying 
	$(a^0_\eps,u^0_\eps) \in
	H^{s+1}\times H^s$ and $C\eps \|(a^0_\eps,u^0_\eps)\|_{H^{s+1}\times H^s} \leq 1$ there exists 
	$$
	T_\eps \geq \frac{1}{C\eps \|(a^0_\eps,u^0_\eps)\|_{H^{s+1}\times H^s}}
	$$
	such that system \eqref{eq:dynaslow} as a unique solution $(a_\eps,u_\eps) \in
	\mathcal{C}^0([0,T_\eps];H^{s+1}\times H^s)$ satisfying
	$$
	\|(a_\eps(\cdot,t),u_\eps(\cdot,t))\|_{H^{s+1}\times H^s}\leq C
	\|(a^0_\eps,u^0_\eps)\|_{H^{s+1}\times H^s}  \quad \text{and}\quad
	\frac12\leq\rho\left(\cdot\:,\frac{t}{\eps}\right) \leq 2
	$$
	whenever $t\in [0,T_\eps].$ 
\end{thm}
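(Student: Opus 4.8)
The plan is to recast \eqref{eq:dynaslow} as a quasilinear system to which a standard symmetric--hyperbolic energy method applies, the only difficulty being the third order ``quantum pressure'' term on the right-hand side of the second equation. As the abstract indicates, this difficulty is resolved by passing to an \emph{augmented system} in which that term no longer causes a loss of derivatives; the whole estimate then hinges on one algebraic fact, namely that the enlarged principal operator is skew-symmetric for a suitable energy.

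\smallskip
Concretely, I would introduce the extra unknown $\fv_\eps:=\nabla a_\eps/(\sqrt2+\eps a_\eps)$, a normalised gradient of the density fluctuation. Differentiating the first equation of \eqref{eq:dynaslow} produces an evolution equation for $\fv_\eps$ of the form $\partial_t\fv_\eps+\nabla\div u_\eps=\eps\,Q_3$, while the quantum pressure, rewritten in terms of $\fv_\eps$, becomes $\eps^2\nabla\div\fv_\eps$ up to lower order terms; here $Q_3$, and likewise the right-hand sides $Q_1,Q_2$ of the $a_\eps$- and $u_\eps$-equations, are at most quadratic in $(a_\eps,u_\eps,\fv_\eps)$ and their first derivatives. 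One thus gets a closed system $\partial_tU+\mathcal L U=\eps\,Q(U,\nabla U)$ for $U=(a_\eps,u_\eps,\fv_\eps)$, whose principal part $\mathcal L$ splits into the skew-symmetric wave block coupling $a_\eps$ and $u_\eps$ at speed $\sqrt2$, and a block coupling $u_\eps$ and $\fv_\eps$ through $\pm\nabla\div$. The crucial point — this is precisely the Schr\"odinger structure of $(GP)$ hidden by the Madelung transform, the complex field $\fv_\eps+iu_\eps$ solving a Schr\"odinger-type equation — is that this second block is \emph{also} skew-symmetric for an appropriate (explicit, $\eps$-dependent) weight, so that the contributions costing extra derivatives cancel pairwise in the energy balance. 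Since $s>1+\frac N2$, $H^s$ is an algebra embedded in $W^{1,\infty}$; hence applying $\partial^\beta$ for $|\beta|\le s$, pairing in $L^2$ with $\partial^\beta U$ and controlling the commutators by Moser--Kato--Ponce estimates gives, for the corresponding (weighted) energy $\mathcal E_s(t)$,
$$
\frac{d}{dt}\,\mathcal E_s(t)\le C\eps\,\mathcal E_s(t)^{3/2}.
$$

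\smallskip
Setting $y:=\mathcal E_s^{1/2}$, with $y(0)$ comparable to $\|(a^0_\eps,u^0_\eps)\|_{H^{s+1}\times H^s}$ (the unknown $\fv_\eps\sim\nabla a_\eps$ accounting for the extra derivative carried by $a_\eps$), the inequality $y'\le\tfrac C2\eps y^2$ yields $y(t)\le 2y(0)$ as long as $t\le(C\eps\,y(0))^{-1}$, which gives the announced $T_\eps$ together with the bound on $\|(a_\eps(t),u_\eps(t))\|_{H^{s+1}\times H^s}$. The smallness hypothesis $C\eps\|(a^0_\eps,u^0_\eps)\|_{H^{s+1}\times H^s}\le1$ then ensures, via $H^s\hookrightarrow L^\infty$, that $|\tfrac\eps{\sqrt2}a_\eps|\le\tfrac12$ on $[0,T_\eps]$, which keeps the coefficient $\sqrt2+\eps a_\eps$ away from zero throughout and gives $\tfrac12\le\rho\le2$. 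Turning these a priori estimates into an actual solution is routine (Friedrichs regularisation and compactness); uniqueness follows from the same energy method at the $L^2$ (or $H^{s-1}$) level for the difference of two solutions; and it remains to check that the augmented system is equivalent to \eqref{eq:dynaslow}, i.e. that if $\fv^0_\eps$ is taken consistently with $a^0_\eps$ then $\fv_\eps$ stays irrotational and both $\fv_\eps$ and $\nabla a_\eps/(\sqrt2+\eps a_\eps)$ solve the same equation with the same datum, so that the $(a_\eps,u_\eps)$-component of the augmented solution does solve \eqref{eq:dynaslow}.

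\smallskip
The main obstacle is the derivative loss in the quantum pressure, which is of size $\sim\eps^2\nabla\Delta a_\eps$: its smallness in $\eps$ is of no help, since on a time interval of length $\sim\eps^{-1}$ no fixed-regularity energy estimate can absorb two extra derivatives, so the passage to the augmented system is a necessity rather than a convenience. The delicate point is then to identify the weighted energy for which the $u_\eps$--$\fv_\eps$ coupling is skew-symmetric and which still controls $\|a_\eps\|_{H^{s+1}}$ — this is subtle because $\nabla\log\rho^2$ is one order of $\eps$ smaller than $u$ in the long-wave scaling, so the correct weight is not the naive one.
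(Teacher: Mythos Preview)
Your approach is correct and coincides with the paper's: after a parabolic time rescaling $t\mapsto t/\eps$ (which removes the $\eps$ from the energy inequality), the paper introduces the complex variable $z=v+iw$ with $w=-\nabla\log\rho^2=-\eps\fv_\eps$, so that the augmented system reads $\partial_t z+\tfrac{\sqrt2}{\eps}\nabla b=i\Delta z-\nabla(\tfrac{z\cdot z}{2})$ and the dangerous second-order term is skew-symmetric for the $\rho^2$-weighted $L^2$ inner product---exactly the cancellation you describe, the $\eps$-dependent weight on $\fv_\eps$ arising simply from $|w|^2=\eps^2|\fv_\eps|^2$. The complex packaging makes the key identity $\langle D^kz,D^k(i\Delta z)\rangle_{\R^{2N}}=0$ immediate, but the content is the same.
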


\begin{rem}\label{rem:coeur}
	i) 
	From the ansatz \eqref{scale}, the time scale of system \eqref{eq:dynaslow} is accelerated by a factor $\eps$ with respect to the time scale of system $\mathrm (GP).$  In terms of the Gross-Pitaevskii equation, the lower bound $T_\eps$ given in Theorem \ref{thm:un} translates therefore into the bound
	$$
	\mathcal{T}_\eps = \eps^{-1} T_\eps \geq 
	\frac{1}{C\eps^2 \|(a^0_\eps,u^0_\eps)\|_{H^{s+1}\times H^s}}.
	$$
	ii)
	A typical initial datum that Theorem \ref{thm:un} allows to handle is
	$$
	\Psi^0(x) = \sqrt{1+\frac{\eps}{\sqrt{2}}a^0(\eps x)} \exp(i\varphi^0(\eps x)),
	$$
	where $u^0\equiv 2\nabla \varphi^0$ and $a^0$ do not depend on $\eps$ and belong to $H^{s+1}\times H^s.$ This corresponds to pertubations of the constant map $1$ of order $\eps$ for the modulus and of wave-length of order $\eps^{-1}.$ In this case, we obtain  the lower bound $T_\eps \geq \frac{c}{\eps},$ that is $ \mathcal{T}_\eps \geq \frac{c}{\eps^2}.$ 
\end{rem}

As a byproduct of Theorem \ref{thm:un}, treating the r.h.s 
of \eqref{eq:dynaslow} as a  perturbation,
 we deduce the following comparison estimate with loss of
three derivatives:

\begin{thm}\label{thm:deux}
	Let $s$, $a^0_\eps$ and $u^0_\eps$ be as in Theorem \ref{thm:un} and let
	$(\mathfrak a,\mathfrak u)$ denote the solution of the free wave equation
$$
\left\{
\begin{array}{l}
\displaystyle
\partial_t \mathfrak a + \sqrt{2}\, {\rm div}\,\mathfrak u = 0\\
\displaystyle
\partial_t \mathfrak u + \sqrt{2}\, \nabla \mathfrak a = 0,
\end{array}
\right.
$$ 
	with initial datum $(a^0_\eps,u^0_\eps).$ If $\eps\leq1$ then for $0\leq t \leq
	T_\eps$ we have
	$$
	\|(a_\eps,u_\eps)(t) - (\mathfrak a,\mathfrak u)(t)\|_{H^{s-2}} \leq C\left[ \eps t
	\|(a^0_\eps,u^0_\eps)\|_{H^{s+1}\times H^s}^2 + \eps^2 t
	\|(a^0_\eps,u^0_\eps)\|_{H^{s+1}\times H^s} \right]. 
	$$
\end{thm}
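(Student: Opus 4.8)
\emph{Strategy.} The plan is to treat the right-hand side of \eqref{eq:dynaslow} as a source term for the free wave equation and to propagate it through the (unitary) wave flow. Set $M:=\|(a^0_\eps,u^0_\eps)\|_{H^{s+1}\times H^s}$. Since the free wave flow is an isometry on every Sobolev space, $(\mathfrak a,\mathfrak u)$ is globally defined and as regular as its datum; hence $(b,v):=(a_\eps-\mathfrak a,u_\eps-\mathfrak u)$ is well defined on $[0,T_\eps]$, vanishes at $t=0$, and — subtracting the free wave system from \eqref{eq:dynaslow} — solves
$$\partial_t b+\sqrt2\,\div v=F_1,\qquad \partial_t v+\sqrt2\,\nabla b=F_2,$$
where $(F_1,F_2)$ is the right-hand side of \eqref{eq:dynaslow} evaluated along $(a_\eps,u_\eps)$. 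The wave operator here is skew-adjoint in $L^2$ and commutes with $(1-\Delta)^{(s-2)/2}$, so an $H^{s-2}$ energy estimate (equivalently, Duhamel's formula plus unitarity of the wave propagator on $H^{s-2}$) gives
$$\|(b,v)(t)\|_{H^{s-2}}\le\int_0^t\|(F_1,F_2)(\tau)\|_{H^{s-2}}\,d\tau,\qquad 0\le t\le T_\eps.$$

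\emph{Estimating the source.} First I would rewrite the quantum pressure to exhibit the extra power of $\eps$ it hides: with $R:=\sqrt{\sqrt2+\eps a_\eps}$ one computes $\Delta R/R=\tfrac{\eps\Delta a_\eps}{2(\sqrt2+\eps a_\eps)}-\tfrac{\eps^2|\nabla a_\eps|^2}{4(\sqrt2+\eps a_\eps)^2}$, so that $F_1=-\eps\,\div(a_\eps u_\eps)$ and
$$F_2=-\eps\,u_\eps\cdot\nabla u_\eps+\eps^2\nabla\Bigl(\frac{\Delta a_\eps}{\sqrt2+\eps a_\eps}\Bigr)-\frac{\eps^3}2\nabla\Bigl(\frac{|\nabla a_\eps|^2}{(\sqrt2+\eps a_\eps)^2}\Bigr).$$
Then I would bound each piece in $H^{s-2}$, using that $H^{s-1}$ is a Banach algebra (as $s-1>N/2$), the a priori bound $\|(a_\eps,u_\eps)(\tau)\|_{H^{s+1}\times H^s}\le CM$ from Theorem \ref{thm:un}, and Moser-type composition estimates for $z\mapsto(\sqrt2+z)^{-1}$ and $z\mapsto(\sqrt2+z)^{-2}$ applied to $z=\eps a_\eps$ — legitimate since $\eps a_\eps$ stays in a fixed compact subset of $(-\sqrt2,\infty)$ by the bound $\tfrac12\le\rho\le2$, and since, after subtracting the constant, the $H^s$-norms of $(\sqrt2+\eps a_\eps)^{-1}$ and $(\sqrt2+\eps a_\eps)^{-2}$ are $O(\eps M)$. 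This gives $\|F_1\|_{H^{s-2}}+\|\eps\,u_\eps\cdot\nabla u_\eps\|_{H^{s-2}}\le C\eps M^2$ and, using $C\eps M\le1$ for the last step,
$$\Bigl\|\eps^2\nabla\Bigl(\tfrac{\Delta a_\eps}{\sqrt2+\eps a_\eps}\Bigr)\Bigr\|_{H^{s-2}}+\Bigl\|\tfrac{\eps^3}2\nabla\Bigl(\tfrac{|\nabla a_\eps|^2}{(\sqrt2+\eps a_\eps)^2}\Bigr)\Bigr\|_{H^{s-2}}\le C\eps^2M+C\eps^3M^2\le C\eps^2M.$$

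\emph{Conclusion and the three-derivative loss.} Plugging $\|(F_1,F_2)(\tau)\|_{H^{s-2}}\le C\eps M^2+C\eps^2M$ into the integral inequality yields $\|(b,v)(t)\|_{H^{s-2}}\le C\eps tM^2+C\eps^2tM$ for $0\le t\le T_\eps$, which is the claim. The loss of exactly three derivatives is forced by the term $\eps^2\nabla(\Delta a_\eps/(\sqrt2+\eps a_\eps))$, which contains three derivatives of $a_\eps$ and therefore requires $a_\eps\in H^{s+1}$ to be measured in $H^{s-2}$; the hydrodynamic nonlinearities $\div(a_\eps u_\eps)$ and $u_\eps\cdot\nabla u_\eps$ cost only one derivative each and are harmless here.

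\emph{Main difficulty.} There is no deep obstruction: the scheme is just Duhamel plus tame product and composition estimates. The two points requiring care are (i) tracking the powers of $\eps$ so that the quantum-pressure contribution genuinely appears at order $\eps^2$ rather than $\eps$, and (ii) keeping all constants in the composition estimates for $(\sqrt2+\eps a_\eps)^{-1}$ and $(\sqrt2+\eps a_\eps)^{-2}$ independent of $\eps$ and of $\tau\in[0,T_\eps]$ — which is exactly where the uniform bounds $\tfrac12\le\rho\le2$ and $\|a_\eps(\tau)\|_{H^{s+1}}\le CM$ provided by Theorem \ref{thm:un} are used.
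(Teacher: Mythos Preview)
Your proof is correct and follows essentially the same route as the paper: subtract the free wave system, apply the standard $H^{s-2}$ energy estimate (equivalently Duhamel plus unitarity) to the difference, and bound the source terms using the algebra property of $H^{s-1}$, Moser-type composition estimates, and the uniform bound from Theorem~\ref{thm:un}. The only cosmetic difference is that the paper writes the sources in the form $\eps\,\div f^1_\eps$ and $\eps\,\nabla(g^1_\eps+2g^2_\eps)$ and estimates $f^1_\eps,g^1_\eps,g^2_\eps$ directly in $H^{s-1}$, whereas you expand the quantum pressure explicitly as $\tfrac{\eps\Delta a_\eps}{2(\sqrt2+\eps a_\eps)}-\tfrac{\eps^2|\nabla a_\eps|^2}{4(\sqrt2+\eps a_\eps)^2}$ before estimating; the two decompositions are equivalent and lead to the same bounds.
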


In Theorem \ref{thm:un}, the fact that $(a^0_\eps,u^0_\eps) \in	H^{s+1}\times
H^s$ with $s\geq 0$ implies in particular that the Ginzburg-Landau energy
$E(\Psi^0)$ of the
corresponding  function $\Psi^0$ is finite, where
$$
E(\Psi) = \int_{\R^N} e(\Psi) = \int_{\R^N}\bigl[\frac{1}{2}|\nabla \Psi|^2 +
\frac{1}{4}(1-|\Psi|^2)^2\bigr]
$$  
is the Hamiltonian for $(GP).$ 

Notice that according to \cite{PG}, the Cauchy problem for $(GP)$ is
globally well-posed in the energy space, in dimension $N=2,\: 3.$ 
On the other
hand, by means of a basic energy method, it may be easily seen that
 $(GP)$ is locally well-posed in $1+H^s$ in any dimension
provided $s>\frac{N}{2}.$ In addition, in both cases, the Ginzburg-Landau energy 
$E(\Psi)$ remains  conserved  during the evolution.

\smallbreak 
In dimension $N\geq2,$ in  order to handle longer time scales, one may take advantage
of the dispersive properties of
system \eqref{eq:dynaslow}. 
As a matter of fact, the linearization about $(0,0)$ of the system  \eqref{eq:dynaslow} does
not exactly  yield the wave operator, as appearing in Theorem \ref{thm:deux}, 
but rather the $\eps$-depending  operator
$$
L_\eps(a,u) = \left(\partial_t a + \sqrt{2}\,{\rm div} u, \partial_t u +
\sqrt{2}\,\nabla a - \sqrt{2}\, \eps^2 \nabla \Delta a\right),
$$
which possesses even better dispersive properties. 
Indeed, performing a Fourier transform with respect to the space variables, the above operator  rewrites for  
$\xi\in\R^N$  and $t \in \R$ as
$$
\hat{ L_\eps (  a, u )}(\xi, t)
= \begin{pmatrix}\partial_t\hat a(\xi,t)\\ \partial_t\hat u(\xi,t)  \end{pmatrix}
+i\begin{pmatrix} 0& \sqrt 2 \,\xi^T\\
\bigl(\sqrt 2+\sqrt2\,\eps^2|\xi|^2\bigr)\xi&0\end{pmatrix}
\begin{pmatrix} \hat a(\xi,t)\\\hat u(\xi,t)\end{pmatrix}.
$$
If we restrict our attention to potential solutions, that is solutions for which 
$u$ is a gradient,  then 
the eigenvalues associated to the above system are
$$
\lambda_\pm=\pm i\sqrt{2}|\xi|\sqrt{\eps^2|\xi|^2+1}.
$$
Therefore, we expect  $L_\eps$
to behave as the  linear wave operator
with velocity $\sqrt2$ 
for low frequencies $|\xi|\ll\eps^{-1}$ 
whereas for high frequencies $|\xi|\gg\eps^{-1},$
it should resemble  the linear Schr\"odinger equation with small  diffusion coefficient equal to 
$\sqrt2\,\eps.$
We thus expect to glean some additional smallness for the solution to the nonlinear equation 
\eqref{eq:dynaslow}
by resorting to the dispersive properties of
those two linear equations\footnote{Note however, that
 since no dispersion occurs for the wave equation in dimension $N=1,$
our method does not give any additional information on that case.}. 
This will enable us to improve the
lower bound for $T_\eps$ stated in Theorem \ref{thm:un} assuming 
 the dimension $N$
is larger than or equal to two. More precisely, we  prove the following statement. 
\begin{thm}\label{thm:trois}
	 Under the assumptions of Theorem \ref{thm:un} with
$s> 2 +\frac{N}{2}$ and $\eps\leq1,$  the time $T_\eps$ may be bounded from below by 
	$$
	\begin{array}{lll}
		\displaystyle
		\frac{c}{\eps^2 \|(a^0_\eps,u^0_\eps)\|_{H^{s+1}\times H^s}^2}
&  \text{if } N\geq 		4,\\[15pt]
		\displaystyle 
		\min\biggl(\frac{c}{\eps^{1+\alpha}
		\|(a^0_\eps,u^0_\eps)\|_{H^{s+1}\times H^s}^{1+\alpha}}, 
\frac{1}{\eps^3
		\|(a^0_\eps,u^0_\eps)\|_{H^{s+1}\times H^s}^2}\biggr) &
\text{if } N = 3\ \hbox{ and }\ 0<\alpha<1,\\[15pt] 		\displaystyle
	       \min\biggl(\frac{c}{\eps^{\frac43}
		\|(a^0_\eps,u^0_\eps)\|_{H^{s+1}\times H^s}^\frac{4}{3}},
		\frac{1}{\eps^{q+1}\|(a^0_\eps,u^0_\eps)\|^{q}
_{H^{s+1}\times H^s}}\biggr) & \text{if } N = 2\ \hbox{ and }\
		2>q>\frac{2}{s-2}.
	\end{array}
	$$

  The constant $c$ depends only on $s$
and also on $N$ if $N\geq4,$ $\alpha$ if $N=3$ and $q$ if $N=2.$
\end{thm}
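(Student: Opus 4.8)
The plan is to improve the lower bound of Theorem~\ref{thm:un} by a bootstrap argument in which dispersive (Strichartz-type) estimates for the linear group generated by $L_\eps$ are fed back into the energy method. Write $M:=\|(a^0_\eps,u^0_\eps)\|_{H^{s+1}\times H^s}$. Recall that the proof of Theorem~\ref{thm:un} provides, on the maximal interval of existence, an energy inequality $\|(a_\eps,u_\eps)(t)\|_{H^{s+1}\times H^s}\le 2M$ valid as long as $\eps\int_0^t\|\nabla(a_\eps,u_\eps)(\tau)\|_{L^\infty}\,d\tau$ stays below a fixed small constant $c_0$; since $s>2+\frac N2$ gives $H^s\hookrightarrow W^{1,\infty}$, the crude bound $\|\nabla(a_\eps,u_\eps)\|_{L^\infty}\lesssim M$ only reproduces $T_\eps\gtrsim(\eps M)^{-1}$, so the point is to control $\int_0^t\|\nabla(a_\eps,u_\eps)\|_{L^\infty}$ by a quantity growing strictly slower than $tM$. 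As a first step I would recast \eqref{eq:dynaslow} as an \emph{augmented} system $\partial_t\mathcal W+L_\eps\mathcal W=\eps\,\mathcal Q(\mathcal W)$ for $\mathcal W=(a_\eps,u_\eps,w_\eps)$, where $w_\eps$ is an auxiliary unknown of size $\eps$ (essentially $\eps$ times a gradient of $a_\eps$) chosen so that the linear part is precisely the operator $L_\eps$ of the introduction — whose symbol on gradient fields has eigenvalues $\lambda_\pm=\pm i\sqrt2|\xi|\sqrt{1+\eps^2|\xi|^2}$ — and so that $\mathcal Q$ is quadratic plus a remainder carrying an extra power of $\eps$, with at most one derivative at top order; in particular $\|\mathcal Q(\mathcal W)\|_{H^\sigma}\lesssim\|\mathcal W\|_{L^\infty}\|\mathcal W\|_{H^{\sigma+1}}$ for $0\le\sigma\le s-1$, and the above energy inequality still holds for $\mathcal W$.

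Next I would establish dispersive estimates for $e^{tL_\eps}$. The characteristic phase is $\omega_\eps(\xi)=\sqrt2\,|\xi|\sqrt{1+\eps^2|\xi|^2}$, and after a Littlewood--Paley decomposition two regimes appear. For frequencies $|\xi|\lesssim\eps^{-1}$ the phase is a perturbation of the wave phase $\sqrt2\,|\xi|$, with Hessian of rank $N-1$; this yields the wave dispersive decay $|t|^{-(N-1)/2}$, hence by a $TT^\ast$ argument the wave Strichartz estimates $\|e^{tL_\eps}P_j f\|_{L^p_tL^q_x}\lesssim 2^{j\gamma(p,q)}\|P_j f\|_{L^2}$ for pairs with $\frac1p+\frac{N-1}{2q}\le\frac{N-1}{4}$ and $\gamma(p,q)=\frac N2-\frac Nq-\frac1p$. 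For $|\xi|\gtrsim\eps^{-1}$ the phase is $\simeq\sqrt2\,\eps|\xi|^2$, which is elliptic; rescaling time by the factor $\eps$ turns it into the free Schr\"odinger phase, so one obtains the Schr\"odinger Strichartz estimates (exponents with $\frac2p+\frac Nq=\frac N2$) at the cost of a factor $\eps^{-1/p}$ together with the derivative loss carried by the high-frequency localization. Combining the two regimes, summing the dyadic pieces, and using $W^{N/q+\delta,q}\hookrightarrow L^\infty$ together with the slack in $s>2+\frac N2$, one gets for each admissible pair an estimate of the form $\|\nabla\mathcal W\|_{L^p_TL^\infty}\lesssim\kappa_\eps\bigl(\|\mathcal W(0)\|_{H^s}+\|\mathcal Q(\mathcal W)\|_{L^1_TH^{\sigma_0}}\bigr)$ for a suitable $\sigma_0\le s-1$, where $\kappa_\eps=1$ in the wave regime and $\kappa_\eps=\eps^{-1/p}$ in the Schr\"odinger regime.

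The bootstrap then runs as follows. On $[0,T]$ put $X_T:=\|\mathcal W\|_{L^\infty_TH^s}+\kappa_\eps^{-1}\|\nabla\mathcal W\|_{L^p_TL^\infty}$ (adding, in the wave case, the full Strichartz norm $\|\mathcal W\|_{L^p_TW^{N/q+\delta+1,q}}$). Writing Duhamel's formula, bounding the inhomogeneous term by the retarded Strichartz inequality together with the quadratic bound of the first step and H\"older in time, namely $\|\mathcal Q(\mathcal W)\|_{L^1_TH^{\sigma_0}}\lesssim\|\nabla\mathcal W\|_{L^p_TL^\infty}\,T^{1-1/p}\,\|\mathcal W\|_{L^\infty_TH^{\sigma_0+1}}$, one arrives at $X_T\lesssim M+\eps\,\kappa_\eps\,T^{1-1/p}X_T^2$, hence $X_T\lesssim M$ by continuity as long as $\eps\,\kappa_\eps\,T^{1-1/p}M\le c$; under the same smallness, $\eps\int_0^T\|\nabla\mathcal W\|_{L^\infty}\le\eps\,T^{1-1/p}\|\nabla\mathcal W\|_{L^p_TL^\infty}\lesssim\eps\,\kappa_\eps\,T^{1-1/p}M\le c_0$, so that the lifespan satisfies $T_\eps\gtrsim(\eps\,\kappa_\eps\,M)^{-p/(p-1)}$. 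It then remains to optimize over admissible pairs. For $N\ge4$ the wave pair $(p,q)=\bigl(2,\tfrac{2(N-1)}{N-3}\bigr)$ is admissible, $\kappa_\eps=1$ and $p/(p-1)=2$, giving $T_\eps\gtrsim(\eps M)^{-2}$. For $N=3$ one may let $p\downarrow2$ in the wave regime (the endpoint being excluded), which yields the bound $(\eps M)^{-1-\alpha}$ for every $\alpha<1$, while the Schr\"odinger regime with $p=2$ produces, after the $\eps^{-1/2}$ loss, the bound $\eps^{-3}M^{-2}$; the lifespan is at least the larger of the two. For $N=2$, letting $p\downarrow4$ in the wave regime gives $(\eps M)^{-4/3}$, whereas the Schr\"odinger regime — in which the admissible $L^p_tL^q_x$ pair together with the derivative count force the restriction $q>\tfrac2{s-2}$ and turn the $\eps$-loss into a factor $\eps^{(q+1)/q}M$ — gives the second term $\eps^{-q-1}M^{-q}$.

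I expect the main obstacle to lie in the dispersive step: proving the Strichartz estimates for $e^{tL_\eps}$ with explicit and essentially optimal dependence on $\eps$, uniformly across the transition zone $|\xi|\sim\eps^{-1}$, and in particular arranging the high-frequency (Schr\"odinger) part so as to minimize the combined derivative- and $\eps$-losses — which is exactly what forces the admissibility restriction $q>\frac2{s-2}$ when $N=2$ and pins down the precise $\eps$-exponents in the $N=2,3$ bounds. A more routine but still delicate point is the algebra of the first step: one must check that passing to the augmented unknowns genuinely turns the quantum pressure into a first-order semilinear contribution, so that the Duhamel estimate indeed closes at a Sobolev level $\le s$.
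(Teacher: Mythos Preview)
Your strategy --- augmented unknowns, Strichartz estimates for the group generated by $L_\eps$ split into a low-frequency wave regime and a high-frequency Schr\"odinger regime, then a bootstrap --- is exactly the paper's. But two points in your write-up are not right and would prevent the argument from closing with the stated exponents.

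\medskip
\textbf{(1) ``Larger of the two'' is backwards.} The wave and Schr\"odinger estimates are \emph{not} two independent proofs between which you may pick the better one. A generic solution carries both low and high frequencies, and the bootstrap quantity must contain \emph{both} pieces, e.g.\ (for $N=3$)
\[
X(t)=\|(b,z)\|_{L^\infty_t(H^s)}+c\,\eps^{-1/p}\|(b,v)_\ell\|_{L^p_t(C^{0,1})}
+c\,\eps^{-1}\|(\eps\nabla b,v)_h\|_{L^2_t(C^{0,1})}.
\]
The resulting inequality is $X\le 3X_0+C\bigl(\eps^{1/p}t^{1/p'}+\eps\sqrt t\bigr)X^2$, and closing requires \emph{each} bracketed term to be small; hence the lifespan is at least the \emph{minimum} of the two expressions in the theorem, not the maximum. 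This is also what the statement asserts.

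\medskip
\textbf{(2) The $\eps$-bookkeeping at high frequency.} Your formula $T_\eps\gtrsim(\eps\kappa_\eps M)^{-p/(p-1)}$ with $\kappa_\eps=\eps^{-1/p}$ gives, for $N=3$ and $p=2$, only $T_\eps\gtrsim\eps^{-1}M^{-2}$, not $\eps^{-3}M^{-2}$; so the accounting cannot be reduced to a time-rescaling loss. What is missing is that the natural high-frequency unknown is $(\eps\nabla b,v)_h$ (not $(b,v)_h$), and that the high-frequency projection of the source terms carries an \emph{additional} factor $\eps$: the quantum-pressure contribution $g_2=\sqrt2\,\eps\,\nabla\div(b\,\Im z)$ has it explicitly, while for $f$ and $g_1=-\nabla|v|^2$ one trades one derivative against a factor $\eps$ via
\[
\|\phi_h\|_{B^\sigma_{2,1}}\le C\eps^\alpha\|\phi_h\|_{B^{\sigma+\alpha}_{2,1}}.
\]
In the paper this yields $\|(\eps\nabla f,g)_h\|_{B^{N/2}_{2,1}}\le C\eps\,\|(b,z)\|_{L^\infty}\|(b,z)\|_{B^{N/2+2}_{2,1}}$ and, together with the Schr\"odinger Strichartz estimate (which in the $(b,v)$ time scale has \emph{no} $\eps$-loss), produces the coefficient $\eps\sqrt t$ in the bootstrap for $N\ge3$, hence the $\eps^{-3}M^{-2}$ constraint after undoing the parabolic rescaling. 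The same mechanism is what forces, for $N=2$, the pair to satisfy $s>4-2/p$ (i.e.\ $q=p'>2/(s-2)$) and yields the term $\eps^{-q-1}M^{-q}$. Your outline flags this as ``the main obstacle'' but does not implement it; without it the high-frequency part of the bootstrap does not close at the claimed rate.
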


\begin{rem}
	\label{rem:2coeurs}
	With an  initial datum as in Remark \ref{rem:coeur} ii), we obtain, as $\eps \to 0,$ $T_\eps \geq {c}{\eps^{-2}}$ if $N\geq 4,$ $T_\eps \geq {c}{\eps^{-(2^-)}}$ if $N\geq 3$, and $T_\eps \geq {c}{\eps^{-\frac{4}{3}}}$ if $N=2.$
\end{rem}

\begin{rem}
In dimension $1$ and $2$, the Gross-Pitaevskii equation is known to have travelling wave solutions $\psi(x,t)=W_\eps(x-c_\eps t)$ which are small amplitude and long wavelength perturbations of the constant $1.$ They are of the form
\begin{eqnarray*}
W_\eps(x) = 1 + \eps^2 w_\eps(\eps x)&\quad\text{in dimension 1},\\
\text{and}\\
W_\eps(x) = 1 + \eps^2 w_\eps(\eps x_1,\eps^2 x_2)&\quad\text{in dimension 2},
\end{eqnarray*}
where the speed $c_\eps$ is given by $c_\eps^2=2-\eps^2,$ and where $w_\eps$ remains bounded in strong norms as $\eps \to 0.$ For initial data of this form (but not necessarily the travelling waves), the corresponding $a_\eps^0$ and $u_\eps^0$ satisfy
$$
\|a_\eps^0,u_\eps^0\|_{H^{s+1}\times H^s} = \left\{ 
\begin{array}{ll}
	O(\eps)&\text{ if } N=1,\\
	O(\sqrt{\eps})&\text{ if }N=2.
\end{array}\right.
$$
If $N=1$, Theorem \ref{thm:un} shows that $\mathcal{T}_\eps \geq C\eps^{-3}$, and Theorem \ref{thm:trois} shows similarily that $\mathcal{T}_\eps \geq C\eps^{-3}$ when $N=2.$ In view of Theorem \ref{thm:deux}, the wave equation is a good approximation on time scales small with respect to $\eps^{-3}.$ For times of order $\eps^{-3},$ the wave equation is no longer a good approximation, as can be seen considering the travelling waves. Indeed since the speed of the travelling wave differs from the speed of sound $\sqrt{2}$ by an amout of order $\eps^2$, both solutions are shifted (in the variables for \eqref{eq:dynaslow}) by an amount of order 1 exactly after a time of order $\eps^{-2},$ which corresponds to a time of order $\eps^{-3}$ in the time variable of $(GP).$\\
For such timescales, one is lead to consider nonlinear approximations such as the KdV or the KP equations (see \cite{BGSS2,CR2}).
\end{rem}

\begin{rem}\label{rem:3coeurs}
	It may be worthwhile to compare these existence results with the corresponding ones for the irrotational compressible Euler equation with smooth compactly supported perturbations of size or order $\eps$ of a constant state. In that case, the corresponding $T_\eps$ is known to be $T_\eps = +\infty$ for $N\geq 4$, $T_\eps \geq \exp(\frac{c}{\eps})$ for $N=3$, $T_\eps \geq {c}{\eps^{-2}}$ for $N=2$ and $T_\eps \geq {c}{\eps}^{-1}$ for $N=1.$ (see e.g. \cite{Sid1,Sid2,Hor} following pioneering ideas by Klainerman \cite{Kla}).      
\end{rem}

On the larger time scale given by Theorem \ref{thm:trois}, equation
\eqref{eq:dynaslow} is better approximated by the linear equation
$L_\eps(a,u)=0$ than by the free wave equation.  More precisely, we have

\begin{thm}\label{thm:quatre}
Let $s>2+\frac{N}{2}$ and  $(a_\eps^0,u_\eps^0)$ be as in Theorem \ref{thm:trois}, let 
$(a_\eps,u_\eps)$ be the corresponding maximal solution of \eqref{eq:dynaslow}
and  $(\mathfrak{a}_{\eps},\mathfrak{u}_{\eps})$ be  the solution to the system
$$L_\eps(\mathfrak{a}_\eps,\mathfrak{u}_\eps)=0
  \quad\text{with initial datum }\  (a_\eps^0,u_\eps^0). 
$$
Let
$\alpha\in(0,\frac{1}{2})$ (satisfying also $\alpha>2-s/2$ if $N=2$).
 There exists a constant $C$ depending only on $s$,
$N$ and  possibly also on $\alpha$ if $N=2,3$ 
 such that for all
$t\in[0,T_\eps],$ the difference
$(\tilde a,\tilde u):=(a_\eps-\mathfrak{a}_{\eps},u_\eps-\mathfrak{u}_{\eps})$ satisfies
$$
\begin{array}{lll}
\|(\tilde a,\tilde u_\ell)(t)\|_{H^{s-1}} 
+\eps^{-1}\|\tilde u_h(t)\|_{H^{s-2}} \leq C\eps\sqrt{ t}\,
\|(a_\eps^0,u_\eps^0)\|^2_{H^{s+1}\times H^s}\ &\hbox{if}& \ N\geq4,\\[1ex]
\|(\tilde a,\tilde u_\ell)(t)\|_{H^{s-1}} 
+\eps^{-1}\|\tilde u_h(t)\|_{H^{s-2}} \leq
C\bigl(t^{1-\alpha}\eps+\eps^\frac{3}{2}\sqrt{t}\bigr)
 \|(a_\eps^0,u_\eps^0)\|^2_{H^{s+1}\times H^s}\ &\hbox{if}&\ N=3,\\[1ex]
\|(\tilde a,\tilde u_\ell)(t)\|_{H^{s-1}} 
+\eps^{-1}\|\tilde u_h(t)\|_{H^{s-2}} \leq C\bigl(\eps t^{\frac{3}{4}}+\eps^{2-\alpha} t^{1-\alpha}\bigr)
 \|(a_\eps^0,u_\eps^0)\|^2_{H^{s+1}\times H^s} \ &\hbox{if}&\ N=2.
\end{array}
$$   
\end{thm}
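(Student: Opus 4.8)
The plan is to write the evolution equation for the difference $(\tilde a,\tilde u)=(a_\eps-\mathfrak a_\eps,u_\eps-\mathfrak u_\eps)$ as a forced linear system $L_\eps(\tilde a,\tilde u)=(0,\eps F_\eps)$ with zero initial datum, where $F_\eps$ collects all the nonlinear terms on the right-hand side of \eqref{eq:dynaslow} (the transport term $-\div(a_\eps u_\eps)$ in the density equation, rescaled appropriately, and the terms $-u_\eps\cdot\nabla u_\eps+2\nabla(\Delta\sqrt{\cdot}/\sqrt{\cdot})$ in the velocity equation, after subtracting the linear piece $-\sqrt2\,\eps^2\nabla\Delta a_\eps$ that has been absorbed into $L_\eps$). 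By Theorem \ref{thm:un} and the improved lower bounds of Theorem \ref{thm:trois}, on $[0,T_\eps]$ the solution $(a_\eps,u_\eps)$ stays bounded in $H^{s+1}\times H^s$ by $C\|(a^0_\eps,u^0_\eps)\|_{H^{s+1}\times H^s}$ and $\rho$ stays between $1/2$ and $2$, so by the usual product and composition estimates in Sobolev spaces (valid since $s>2+N/2$) one controls $\|F_\eps(t)\|_{H^{s-1}}\lesssim\|(a^0_\eps,u^0_\eps)\|^2_{H^{s+1}\times H^s}$ (the highest-order term, coming from $\eps^2\nabla\Delta a_\eps$ beyond what is in $L_\eps$, still costs only two derivatives relative to $H^{s+1}$, hence lands in $H^{s-1}$; note the $\eps^2$ there combines with the overall $\eps$ to give an $\eps^3$ contribution, which is why the high-frequency norm is measured in $H^{s-2}$ with the prefactor $\eps^{-1}$).

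Next I would apply Duhamel's formula for the group $e^{tL_\eps}$ generated by the skew-adjoint operator $L_\eps$: $(\tilde a,\tilde u)(t)=\eps\int_0^t e^{(t-\tau)L_\eps}(0,F_\eps(\tau))\,d\tau$. The point is to use the dispersive/Strichartz estimates for $e^{tL_\eps}$ — the same ones underlying Theorem \ref{thm:trois} — which are better than a crude $L^\infty_t$ bound precisely because the symbol $\lambda_\pm=\pm i\sqrt2|\xi|\sqrt{1+\eps^2|\xi|^2}$ has non-degenerate Hessian in the relevant frequency range. Splitting $\tilde u=\tilde u_\ell+\tilde u_h$ into low frequencies ($|\xi|\lesssim\eps^{-1}$, wave-like) and high frequencies ($|\xi|\gtrsim\eps^{-1}$, Schr\"odinger-like), one uses on each piece the corresponding decay: for high frequencies the Schr\"odinger-type smoothing/dispersion with diffusion $\sqrt2\,\eps$ (accounting for the $\eps^{-1}$ weight and the loss to $H^{s-2}$), for low frequencies and for $\tilde a$ the wave-type dispersion in dimension $N$. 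Estimating $\|(\tilde a,\tilde u)(t)\|$ by $\eps$ times the time-integral of the dispersive norm of $e^{(t-\tau)L_\eps}(0,F_\eps(\tau))$, and bounding the latter by a $t$-power (from the dispersive decay rate, integrated) times $\|F_\eps(\tau)\|_{H^{s-1}}\lesssim\|(a^0_\eps,u^0_\eps)\|^2$, produces the three dimension-dependent right-hand sides: $\eps\sqrt t$ for $N\ge4$ (square-integrable-in-time decay), and the interpolated rates $\eps t^{1-\alpha}$ resp. $\eps t^{3/4}$ for $N=3,2$, with the extra $\eps^{3/2}\sqrt t$ resp. $\eps^{2-\alpha}t^{1-\alpha}$ terms arising from the genuinely high-order ($\eps^2$-weighted) part of $F_\eps$.

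The main obstacle is the low-dimensional case, especially $N=2$: there the wave dispersion decays only like $t^{-1/2}$, which is not time-integrable, so a naive Duhamel estimate diverges. One must instead combine the Strichartz space-time estimates for $e^{tL_\eps}$ with a careful frequency-dependent splitting, paying the $\eps$-powers and the derivative losses (the condition $\alpha>2-s/2$, equivalently $s>4-2\alpha$, is exactly what is needed to absorb the extra derivatives through Sobolev embedding when the dispersive exponent is weak) to close the estimate. Quantifying precisely how the cutoff frequency $\eps^{-1}$ interpolates between the wave regime and the Schr\"odinger regime, and optimizing the resulting exponents against the available decay, is the technical heart of the argument; the higher-dimensional cases $N\ge4$ are comparatively routine once the dispersive estimates for $L_\eps$ are in hand.
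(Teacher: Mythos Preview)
Your setup is right: the difference $(\tilde a,\tilde u)$ satisfies a forced linear system $L_\eps(\tilde a,\tilde u)=(f,g)$ with zero data and quadratic source. But the mechanism you describe for the time gain is wrong. You say the $\sqrt t$ (resp.\ $t^{3/4}$, $t^{1-\alpha}$) comes from ``the dispersive decay rate, integrated'' of $e^{(t-\tau)L_\eps}$ acting on $F_\eps(\tau)$. There is no such decay in the norm you target: the group generated by $L_\eps$ is unitary on the natural energy norm, so $\|e^{(t-\tau)L_\eps}(0,F_\eps)\|_{H^{s-1}}\approx\|F_\eps\|_{H^{s-1}}$ with no $(t-\tau)$-factor. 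Combined with your pointwise bound $\|F_\eps(\tau)\|_{H^{s-1}}\lesssim\|(a^0_\eps,u^0_\eps)\|^2$, Duhamel plus unitarity yields only $\eps t\,\|(a^0_\eps,u^0_\eps)\|^2$ --- this is exactly the estimate of Theorem~\ref{thm:deux}, not Theorem~\ref{thm:quatre}. Your diagnosis that the $N=2$ difficulty is the non-integrability of the wave decay $t^{-1/2}$ is a symptom of the same misconception.

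The paper does \emph{not} use Strichartz on the propagator of the difference equation at all. It uses the plain energy estimate (in the parabolic time variable for $(b,v)$), which gives
\[
\|\tilde b(t)\|_{H^{s-1}}+\|\tilde v_\ell(t)\|_{H^{s-1}}+\eps^{-1}\|\tilde v_h(t)\|_{H^{s-2}}
\;\le\; C\int_0^t\|(b,z)(\tau)\|_{L^\infty}\,\|(b,z)(\tau)\|_{H^s}\,d\tau,
\]
crucially \emph{retaining} the factor $\|(b,z)\|_{L^\infty}$ in the tame estimate for the quadratic source rather than absorbing it by Sobolev. The dispersive gain then enters through the \emph{nonlinear solution}, not the linear propagator: the proof of Theorem~\ref{thm:trois} already established, for $N\ge4$, the Strichartz-type bound $\eps^{-1/2}\|(b,z)\|_{L^2_t(C^{0,1})}\le C\|(b_0,z_0)\|_{H^s}$ on $[0,T_\eps]$. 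Cauchy--Schwarz in time then gives $\int_0^t\|(b,z)\|_{L^\infty}\,d\tau\le\sqrt t\,\|(b,z)\|_{L^2_t(L^\infty)}\le C\eps^{1/2}\sqrt t\,\|(b_0,z_0)\|_{H^s}$, and reverting to the slow time yields the factor $\eps\sqrt t$. For $N=3$ and $N=2$ the same energy inequality is combined with the $L^p_t$ (resp.\ $L^4_t$) low-frequency and $L^2_t$ (resp.\ $L^p_t$) high-frequency Strichartz bounds on $(b,z)$ obtained in the proof of Theorem~\ref{thm:trois}; H\"older with the matching exponent produces the two terms in each case, and no separate argument for $N=2$ is needed.
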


Here, $\tilde u_\ell$ and $\tilde u_h$ denote respectively the low and high
frequency parts of $\tilde u$, the threshold between the two being set once more at $\eps^{-1}$ (see the exact definition in \eqref{eq:def} below).

\medskip

In the existing mathematical literature, the Gross-Pitaevskii equation is sometimes considered in its semi-classical form
\begin{equation}\label{eq:gpsemi}
i\eps\frac{\partial \Psi_\eps}{\partial t} + \eps^2\Delta \Psi_\eps = \Psi_\eps (|\Psi_\eps|^2-1) . 
\end{equation}
One can easily recover the original equation $(GP)$ by mean of the hyperbolic scaling
$$
\Psi_\eps(x,t) = \Psi (\frac x \eps, \frac t \eps). 
$$
In this setting, we have
$$
\left\{
\begin{array}{l}
\displaystyle
a_\eps = \frac{\sqrt{2}}{\eps}(|\Psi_\eps|^2-1),
\\[8pt]
\displaystyle
u_\eps = 2 \nabla\left( \arg(\Psi_\eps)\right).
\end{array}
\right.
$$
In \cite{CoSo}, equation \eqref{eq:gpsemi} is considered on a bounded simply connected 
domain $\Omega \subset \R^2$ with Dirichlet boundary condition and initial datum of modulus one (so that $a_\eps$ vanishes at time zero), independent of $\eps$  and bounded in $H^1(\Omega).$ It is proved that $\Psi_\eps$ converges weakly in $L^\infty(\R_+,H^1(\Omega))$ and strongly in $\mathcal{C}^0([0,T],L^2(\Omega))$ to    $\Psi_*$ of modulus one whose phase satisfies the linear wave equation with speed $\sqrt{2}$. This is consistent with our result.  It is stronger in the sense that it allows for rough data, but it is also weaker in the sense that it only provides weak convergence.       

Another regime for \eqref{eq:gpsemi}, corresponding to oscillating phases, has been investigated by Grenier in \cite{Gr}, and more recently by Alazard and Carles \cite{AC}, Lin and Zhang \cite{LZ}, Zhang \cite{Z} and Chiron and Rousset \cite{CR}.  

Finally, the Gross-Pitaevskii equation has also been widely considered in a parabolic type scaling, namely
$$
\Upsilon_\eps(x,t) = \Psi(\frac x \eps,\frac t {\eps^2}),
$$  
so that $(GP)$ is turned into
\begin{equation}\label{eq:habit}
i\frac{\partial {\Upsilon_\eps}}{\partial t} + \Delta \Upsilon_\eps = \frac{1}{\eps^2}\Upsilon_\eps (|\Upsilon_\eps|^2-1) ,
\end{equation}
whose Hamiltonian reads
$$
E_\eps(\Upsilon) = \int_{\R^N}\bigl[\frac{1}{2}|\nabla \Upsilon|^2 +
\frac{1}{4\eps^2}(1-|\Upsilon|^2)^2\bigr].
$$  
Equation \eqref{eq:habit} is mainly considered in the regime where vortices are present \cite{JeCo,Lin,JeSp,BJS} and the energy is essentially reduced to the vortex energy so that no energy is left for wave oscillations as considered here.
As long as $\Psi$ does not vanish, equation $(GP)$ and the system
\eqref{eq:dynaslow} are obviously equivalent.  Therefore, Theorem
\ref{thm:un} yields a lower bound on the first occurrence of a zero of
$\Psi$ and hence of a vortex.
It would be of high interest to combine the two approaches in order to understand the interaction between these two different regimes.     

\medskip

System \eqref{eq:hydro} also enters in the class of  capillary fluid equations 
 studied in \cite{BDD}, with capillary coefficient $K(\rho)=\frac{1}{\rho}.$  Indeed,  we have
 \begin{equation}\label{eq:rho}
\frac{\Delta \rho}{\rho} = K(\rho^2)\Delta \rho^2 + K'(\rho^2) |\nabla \rho^2|^2
\ \text{ with }\ K(s)=\frac{1}{s}\cdot
\end{equation}
Notice  that, if we
 consider more general nonlinearities for $(GP)$, of the form $\Psi F(\vert \Psi \vert^2)$, the pressure is   turned into $p(\rho)=2F(\rho^2)$, whereas the capillarity coefficient remains unchanged.

\medskip

We now come to the main ingredients in the proofs of Theorem \ref{thm:un},
\ref{thm:deux}, \ref{thm:trois} and \ref{thm:quatre}.  For expository purposes,
it is convenient to use the parabolic 
scaling so as to  remove as much as possible the $\eps-$dependence.
More precisely, we introduce the new unknowns
$$
\left\{
\begin{array}{l}
	\displaystyle
	b_\eps(x,t) = a_\eps(x,\frac{t}{\eps})\\[8pt]
	\displaystyle
	v_\eps(x,t) = u_\eps(x,\frac{t}{\eps})
\end{array}
\right.
$$
so that the lower bound that we want to exhibit in Theorem \ref{thm:un}
becomes of order $1, $ for initial data as in Remark \ref{rem:coeur}.
\smallbreak
Notice that we have the relation 
$$
\Upsilon_\eps=\rho_\eps e^{i\varphi_\eps}\quad\hbox{with }\
\rho_\eps^2:=1+\frac{\eps}{\sqrt2}b_\eps\ \hbox{ and }\ 
v_\eps=2\nabla\varphi_\eps,
$$
and  that 
$(b_\eps,v_\eps)$ satisfies the system
\begin{equation}\label{eq:dyna}
\left\{
\begin{array}{l}
\displaystyle
\partial_t b_\eps + \frac{\sqrt{2}}{\eps}{\rm div}v_\eps = -{\rm
div}(b_\eps v_\eps),
\\[10pt]
\displaystyle
\partial_t v_\eps + \frac{\sqrt{2}}{\eps} \nabla b_\eps = -v_\eps \cdot\nabla
v_\eps + 2\nabla \bigl(
\frac{\Delta \rho_\eps}{\rho_\eps}\bigr).
\end{array}
\right.
\end{equation}

In view of the form of system
\eqref{eq:dyna}, our aim is to transpose the classical energy estimates for
symmetrizable hyperbolic systems. Indeed, in the linear case,
 the singular terms
involving $\frac{\sqrt{2}}{\eps}$ are transparent due to the skewsymmetry,
 and
do not contribute to the final balance.  However for the full system, in the
computation of the energy estimates, the higher order derivatives are difficult
to control, both by themselves and by their interaction with the previously
mentioned singular terms.  A similar difficulty in a related context was
overcome by S. Benzoni-Gavage, the second author  and 
S. Descombes in \cite{BDD}.  The crucial point there, inspired by earlier works
by F. Coquel \cite{Coquel}, is to consider an augmented system, adding the
equation for $\nabla (\log \rho_\eps^2).$ This choice is in fact quite natural since 
one may write
$$
\Upsilon_\eps=\exp\biggl(\frac{i}2\bigl(2\varphi_\eps-i\log\rho^2_\eps\bigr)\biggr).
$$
Therefore, we consider the new $\C^N$-valued function\footnote{Whenever  it  does
not lead to a confusion,  we omit the subscript  $\eps$.}
\begin{equation}\label{eq:sixbis}
z = v + i w \equiv \nabla (2\varphi - i \log \rho^2).
\end{equation}
We obtain the following system for the functions $z$ and $b$
\begin{equation}\label{eq:seven}
\left\{
\begin{array}{l}
\displaystyle
\partial_t b + \frac{\sqrt{2}}{\eps}{\rm div}({\rm  Re} z)
= -{\rm div}(b\, {\rm Re}z),\\
\displaystyle
\partial_t z + \frac{\sqrt{2}}{\eps} \nabla b =  i\Delta z - \nabla\Bigl(\frac{z\cdot
z}{2}\Bigr). 
\end{array}
\right.
\end{equation}
Here, for $z, z' \in \C^N$, we write $z\cdot z'= \sum_{k=1}^N z_kz_k'$ where the
products within the sum are complex multiplications.  
We first observe that
$$
\frac{\nabla\Upsilon_\eps}\Upsilon_\eps=\frac i2\,z\quad\text{and}\quad
|\Upsilon_\eps|^2-1=\frac{\eps^2}2b. $$ 
Therefore
$$
E(\Upsilon_\eps) = \frac1 8\biggl(\|b\|_{L^2(\R^N)}^2 +
\|z\|_{L^2(\R^N;(1+\eps b /\sqrt{2})dx)}^2\biggr).
$$
The main ingredient in the proof of Theorem \ref{thm:un} is
the following weighted a priori energy estimate involving high-order space derivatives:
\begin{prop}\label{prop:1}
Let $s$ be a nonnegative
 integer and let $\Upsilon_\eps$ be a
solution to $\eqref{eq:habit}$ such that  $(b,z) \in
\mathcal{C}^1([0,T],H^{s+1}(\R^N))$
and $(Db,Dz)\in \mathcal{C}^0([0,T];L^\infty)$
 for some $T>0.$  Assume that
\begin{equation}\label{eq:novortex}
m:=\inf_{x,t}|\Upsilon_\eps(x,t)| >0.
\end{equation}
 Then there exists a constant $C$ depending only
on $s,$ $m,$ $N,$ such for any time $t\in[0,T]$ we have for all
integer $s'\in\{0,\cdots,s\},$
$$
\frac{d}{dt} \Gamma^{s'}(b,z) \leq C (1+\eps\|b\|_{L^\infty})\|(Db,Dz)\|_{L^\infty} \left(
\Gamma^{s'}(b,z)
+ E_\eps(\Upsilon_\eps)\right),$$
where 
$$
\Gamma^s(b,z) = \|D^s b\|^2_{L^2(\R^N)} + \|D^s z\|_{L^2(\R^N;(1+\eps
b/\sqrt{2})dx)}^2.
$$
\end{prop}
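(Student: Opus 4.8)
The plan is to run a weighted Friedrichs-type energy estimate directly on the augmented system \eqref{eq:seven}, localised at differentiation order $s'$. Fix a multi-index $\alpha$ with $|\alpha|=s'$, apply $D^\alpha$ to both equations of \eqref{eq:seven} and set $b_\alpha:=D^\alpha b$, $z_\alpha:=D^\alpha z$ and $\omega:=1+\eps b/\sqrt2$. Two structural facts will be used repeatedly: $\omega=|\Upsilon_\eps|^2$ (so $\omega\geq m^2>0$ and $w:=\Im z=-\nabla\log\omega$, that is $\frac\eps{\sqrt2}\nabla b=-\omega\,w$), and $z$, hence $z_\alpha$, is curl-free, so $\partial_j(z_\alpha)_k=\partial_k(z_\alpha)_j$. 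I would then differentiate in time
$$
\mathcal E_\alpha:=\tfrac12\|b_\alpha\|_{L^2}^2+\tfrac12\int_{\R^N}\omega\,|z_\alpha|^2\,dx,
$$
whose sum over $|\alpha|=s'$ is comparable to $\Gamma^{s'}(b,z)$, and sort the resulting terms into: the singular $\eps^{-1}$ terms, the contribution of the dispersive term $i\Delta z_\alpha$, the contribution of $\partial_t\omega$, and the contributions of the two quadratic sources $D^\alpha\div(b\,\Re z)$ and $D^\alpha\nabla(z\cdot z/2)$.

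The algebraic part is immediate: the $\eps^{-1}$ terms cancel by the skew-symmetry of the wave operator, exactly as in the free case, leaving the harmless leftover $-\int b\,\Re z_\alpha\cdot\nabla b_\alpha$ coming from the $\eps b$ part of the weight. The term $i\Delta z_\alpha$ contributes nothing against the flat part of $\omega$, since $\Re\bigl(i\int\overline{z_\alpha}\cdot\Delta z_\alpha\bigr)=\Re\bigl(-i\|\nabla z_\alpha\|_{L^2}^2\bigr)=0$, but against the $\eps b$ part it produces, after one integration by parts, the commutator $\frac\eps{\sqrt2}\int\nabla b\cdot\sum_k\Im\bigl(\overline{(z_\alpha)_k}\,\nabla(z_\alpha)_k\bigr)$, which is a priori of order $s'+1$ and cannot be absorbed. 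The heart of the matter is that this term is annihilated by a piece of the quadratic source in the $z$-equation. Writing $D^\alpha(z\cdot z/2)=z\cdot z_\alpha+L$ with $L$ of order $\leq s'-1$ in each factor, the only genuinely order-$(s'+1)$ contribution of that source is $-\int\omega\,\Re\bigl(\overline{z_\alpha}\cdot(z\cdot\nabla z_\alpha)\bigr)$; using the curl-freeness of $z_\alpha$ this splits as $-\tfrac12\int\omega\,\Re z\cdot\nabla|z_\alpha|^2+\int\omega\sum_k w_k\sum_j\Im\bigl(\overline{(z_\alpha)_j}\,\partial_k(z_\alpha)_j\bigr)$. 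Through $\frac\eps{\sqrt2}\nabla b=-\omega w$ the second summand is exactly the opposite of the dispersive commutator, while the first, once integrated by parts, combines with the $\partial_t\omega$ contributions coming from $\div(b\,\Re z)$ into a term of order $\leq s'$. In the same spirit, the top-order piece $\int b\,\nabla b_\alpha\cdot\Re z_\alpha$ produced by Leibniz in the $b$-equation cancels the leftover $-\int b\,\Re z_\alpha\cdot\nabla b_\alpha$ mentioned above, and the commutator $[D^\alpha,b]\Re z$ is brought back to order $s'$ by one further integration by parts. This chain of cancellations is precisely what the augmented formulation buys us, in the spirit of \cite{Coquel,BDD}: carrying $w=\Im z$ alongside $v=\Re z$ exposes the compensation between the dispersion and the capillary-type nonlinearity.

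Once all order-$(s'+1)$ terms are gone, every surviving term is a finite sum of integrals of products of derivatives of $b$ and $z$ of order $\leq s'$, with a prefactor controlled by $1+\eps\|b\|_{L^\infty}$ (coming from $\omega$ and $\nabla\omega=-\omega w$, together with the bound on $\|w\|_{L^\infty}$ in terms of $m$ and $\eps\|Db\|_{L^\infty}$). Each such integral is estimated by Hölder's inequality together with the Gagliardo--Nirenberg/Moser tame product inequalities, placing one low-order factor in $L^\infty$ — bounded by $\|(Db,Dz)\|_{L^\infty}$ — and the remaining factors in $L^2$, bounded by $\sqrt{\Gamma^{s'}(b,z)}$, or by $\sqrt{E_\eps(\Upsilon_\eps)}$ for those carrying too few derivatives to reach order $s'$. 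A final application of Young's inequality turns the mixed products into $\Gamma^{s'}+E_\eps$; summing over $|\alpha|=s'$ yields the stated differential inequality.

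The main obstacle is precisely the cancellation described above: recognising that the commutator of the dispersion $i\Delta z$ with the variable weight $\omega$ is killed, modulo order-$s'$ remainders, by a specific component of the nonlinearity $\nabla(z\cdot z/2)$, and verifying that this uses nothing beyond the gradient structure of $z$ and the identity $\omega=|\Upsilon_\eps|^2$. Everything else is bookkeeping and standard product estimates in Sobolev spaces.
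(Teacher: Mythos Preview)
Your proposal is correct and follows essentially the same route as the paper: the same weighted energy $\Gamma^{s'}$, the same three structural cancellations (skew-symmetry of the wave part, the vanishing of the unweighted $i\Delta z_\alpha$ contribution, and the ``remarkable compensation'' between the weighted dispersion commutator, the top-order piece of $\nabla(z\cdot z/2)$, and the $\partial_t\omega$ term via the continuity equation and the identity $\tfrac{\eps}{\sqrt2}\nabla b=-\omega\,\Im z$), followed by Gagliardo--Nirenberg tame estimates on the remainders. The only cosmetic differences are that the paper shows the third cancellation is \emph{exact} (not merely ``of order $\leq s'$'') and does not need to invoke $\|w\|_{L^\infty}$ separately in the prefactor, since after the cancellations only $\omega$ itself survives as a coefficient.
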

\begin{rem}
A generalization of the above proposition
to noninteger  
Sobolev exponents and Besov spaces is given in Section \ref{ss:besov}.
 Notice that  for the case $s'=0,$ we have, in view of the  conservation of energy, the identity
$$
\frac{d}{dt}\Gamma^0(b,z)=0.
$$
\end{rem}

The main idea of the proof of Proposition \ref{prop:1} is that, up
to lower order terms which may be bounded with no loss of derivatives
provided $Db$ and $Dz$ are in $L^\infty,$ the structure for the system satisfied by
$(D^kb,D^kz)$ is the same as that of system \eqref{eq:seven}.
 For the proof of Theorem \ref{thm:un}, we  perform a time integration in the estimate of 
 Proposition \ref{prop:1}  which yields
$$
\|(b,z)(t)\|_{H^s}\leq
C\|(b^0,z^0)\|_{H^s}\exp\biggl(\int_0^t\|(Db,Dz)\|_{L^\infty}\,d\tau\biggr)
$$
whenever $1+\eps b/\sqrt2$ remains bounded and bounded away from zero.
In other words, the $H^s$ norms of 
$(b,z)(t)$ may be bounded in terms of the $H^s$ norms of the initial data
provided  we have a control over $(Db,Dz)$ in $L^1([0,t];L^\infty).$
If $s>N/2+1,$ it follows from the Sobolev embedding that  
$\|(Db,Dz)\Vert_{L^\infty}$ may be bounded by $\|(b,z)\|_{H^s}$
so that the above inequality leads to an explicit 
differential inequality for $\|(b,z)(t)\|_{H^s}$ and it is 
then straightforward to close the estimate for times
of order $\|(b_0,z_0)\|_{H^s}^{-1}.$

The proof of Theorem \ref{thm:deux} is based on elementary energy estimates for the system satisfied by    $(a_\eps,u_\eps)-(\mathfrak a,\mathfrak u),$  the source term of which being controlled thanks  to Theorem \ref{thm:un}. 
\smallbreak
 As mentioned above, the proofs of  Theorem \ref{thm:trois} 
and Theorem \ref{thm:quatre}  rely on dispersive properties 
of the equation. More precisely, we provide in Proposition \ref{p:dispersive} some  
Strichartz type estimates (in the spirit of  the pioneering work 
by R. Strichartz in \cite{Str} and of the paper
by J. Ginibre and G. Velo \cite{GV}) 
tailored for the operator $L_\eps$.  
Let us emphasize that related estimates have been 
used by the second author in \cite{Dan} 
for the study of slightly compressible fluids and 
by S. Gustafson, K. Nakanishi and T.P. Tsai
in  \cite{nakanishidanslacolle} for the Gross-Pitaevskii equation. 
 These estimates allow to 
  improve the  control on  the term $\|(Db,Dz)\|_{L^\infty}$
appearing in the key inequality of Proposition \ref{prop:1}. Indeed, it
 turns out that in dimension $N\geq2,$ 
one gets an additional 
bound for $\eps^{-\frac1p}\|(Db,Dz)\|_{L^p([0,t];L^\infty)}$
for some $p\in[2,\infty[$ depending on the dimension. 
 

\section{Short time existence and well-posedness for $(GP)$}

This section is devoted 
to the proof of  local well-posedness 
for $(GP)$  with suitably smooth initial  data which 
bounded away from zero.  Since such data do not fit in the standard Sobolev space framework, 
we introduce, as in \cite{BS},  the class of maps
$$
\mathcal{V} = \left\{ U \in L^\infty(\R^N,\C), \nabla^k U \in L^2(\R^N),
\forall k\geq 2, \nabla |U| \in L^2(\R^N), (1-|U|^2)\in L^2(\R^N)\right\}. 
$$ 
A first short time existence result is given by
\begin{prop}\label{prop:BS}
i) Let $U \in \mathcal{V}$ and $s>{\rm Max}(1,N/2).$ The Cauchy problem for
$(GP)$ is locally well-posed in $U+H^s(\R^N).$ More precisely, given $R>0$
there exists a time $T(R)>0$ such that if $\|\Phi^0\|_{H^s}\leq R$ then there
exists a unique solution $t\mapsto \Psi(t)$ in
$\mathcal{C}^0([-T(R),T(R)];U+H^s(\R^N))$ 
satisfying the initial time condition
$$
\Psi(0) = U + \Phi^0.
$$   
ii) The flow map $\Phi^0 \mapsto \Phi:= \Psi-U$ is continuous 
from the ball $B(R)$ of
$H^s(\R^N)$ into $\mathcal{C}^0([-T(R),T(R)],U+H^s(\R^N)).$\\
iii) If $\Psi(0)\in U+H^{s+2}(\R^N),$ then $t\mapsto\Psi(t)$ belongs to
$\mathcal{C}^1([-T(R),T(R)];U+H^s(\R^N)).$\\
iv) If $E(\Psi(0))<+\infty,$ then
$$
\frac{d}{dt} E(\Psi(t)) = 0, \;\forall t \in (-T(R),T(R)).
$$
v) If $E(\Psi(0)) <+\infty,$ then
$$
\|\Psi(t)-\Psi(0)\|_{L^2(\R^N)} 
\leq C\exp(C|t|),\; \forall t \in (-T(R),T(R)),  
$$
where the constant $C$ depends only on $E(\Psi(0)).$
\end{prop}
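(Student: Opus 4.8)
The plan is to reduce the Cauchy problem for $(GP)$ around $U\in\mathcal V$ to a genuine Cauchy problem in the Sobolev space $H^s(\R^N)$ for the perturbation $\Phi:=\Psi-U$, and then to run a standard fixed-point/energy argument. Writing $\Psi=U+\Phi$, the equation $(GP)$ becomes
$$
i\partial_t\Phi+\Delta\Phi=\Delta U\ (\text{a fixed }L^2\text{ source, since }\nabla^2U\in L^2)\ +\ (U+\Phi)\bigl(|U+\Phi|^2-1\bigr),
$$
so that $\Phi$ solves a semilinear Schr\"odinger equation $i\partial_t\Phi+\Delta\Phi=F(\Phi)$ with a nonlinearity $F$ that is a polynomial (of degree $3$) in $\Phi,\bar\Phi$ with coefficients built from $U$; because $U\in L^\infty$, $\nabla U\in L^2_{\mathrm{loc}}$ with $\nabla^k U\in L^2$ for $k\ge2$, and $1-|U|^2\in L^2$, one checks that $F$ maps $H^s$ into $H^s$ for $s>N/2$ and is Lipschitz on balls, the key point being that $H^s$ is a Banach algebra for $s>N/2$ and that the ``bad'' low-regularity pieces of $U$ (namely $\Delta U$ and $(|U|^2-1)U$, the latter lying in $L^2$) enter only through terms that are already in $L^2\subset$ the target space. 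First I would establish this mapping/Lipschitz property carefully, distinguishing the terms $\Delta U$, the constant-in-$\Phi$ term $(|U|^2-1)U$, the linear-in-$\Phi$ terms, and the purely nonlinear terms, using $\|fg\|_{H^s}\lesssim\|f\|_{H^s}\|g\|_{H^s}$ together with $\|fg\|_{L^2}\lesssim\|f\|_{L^\infty}\|g\|_{L^2}$ for the mixed pieces.

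Next, for (i), I would set up the contraction mapping argument on $\mathcal C^0([-T,T];H^s)$ via Duhamel's formula $\Phi(t)=e^{it\Delta}\Phi^0-i\int_0^te^{i(t-\tau)\Delta}F(\Phi(\tau))\,d\tau$, using the unitarity of $e^{it\Delta}$ on $H^s$; since no smoothing from the semigroup is needed (we are above scaling), a short time $T(R)$ depending only on $R=\|\Phi^0\|_{H^s}$ and on the fixed data $U$ makes the map a contraction on a ball of $\mathcal C^0([-T,T];H^s)$, giving existence and uniqueness. Uniqueness in the full space $\mathcal C^0([-T,T];U+H^s)$ follows by a Gronwall estimate on the difference of two solutions. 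Part (ii), continuity of the flow, is then standard: estimate $\Phi_1-\Phi_2$ in $H^s$ in terms of $\Phi_1^0-\Phi_2^0$ using the Lipschitz bound on $F$ and Gronwall. For (iii), persistence of regularity, I would propagate the $H^{s+2}$ bound by the same fixed-point scheme one notch higher (noting $\nabla^kU\in L^2$ for all $k\ge2$ keeps $\Delta U\in H^s$, so $F$ still maps $H^{s+2}$ to $H^{s+2}$), which gives $\Phi\in\mathcal C^0([-T,T];H^{s+2})$, and then read off $\partial_t\Phi=i\Delta\Phi-iF(\Phi)\in\mathcal C^0([-T,T];H^s)$ directly from the equation.

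For (iv), conservation of energy, I would first prove it for smooth data (e.g.\ $\Psi(0)\in U+H^{s+2}$, using (iii) so that $\partial_t\Psi\in\mathcal C^0([-T,T];H^s)$ and the formal computation $\frac{d}{dt}E(\Psi)=\Re\int(-\Delta\Psi+\Psi(|\Psi|^2-1))\,\overline{\partial_t\Psi}=\Re\int(i\partial_t\Psi)\overline{\partial_t\Psi}=0$ is rigorous after checking integrability of each term using $\nabla U\in L^2$, $1-|U|^2\in L^2$), and then pass to general finite-energy data by the continuity statement (ii) together with density of $U+H^{s+2}$ in the finite-energy class and lower/upper semicontinuity of $E$. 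Finally, for (v), from $i\partial_t\Psi=-\Delta\Psi+\Psi(|\Psi|^2-1)$ one has $\|\partial_t\Psi\|_{L^2}\le\|\Delta\Psi\|_{L^2}+\||\Psi|^2-1\|_{L^2}\|\Psi\|_{L^\infty}+\|\Psi\|_{L^2\text{-type bound}}$; but $\Delta\Psi$ need not be in $L^2$, so instead I would pair the equation against $\Psi(t)-\Psi(0)$ and integrate by parts, bounding everything by the conserved energy $E(\Psi(0))$ (which controls $\|\nabla\Psi\|_{L^2}$ and $\|1-|\Psi|^2\|_{L^2}$), to get $\frac{d}{dt}\|\Psi(t)-\Psi(0)\|_{L^2}^2\le C(E)\bigl(1+\|\Psi(t)-\Psi(0)\|_{L^2}\bigr)$ and conclude by Gronwall. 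The main obstacle is the bookkeeping in the first step: showing that despite $U$ itself being far from $H^s$ (only $\nabla U|U|$, $\nabla^{\ge2}U$, and $1-|U|^2$ are controlled), the nonlinearity $F$ nonetheless stabilizes $H^s$ with locally Lipschitz bounds — this requires the right splitting of $F$ and careful use of the algebra property together with $L^\infty\cdot L^2$ estimates for the mixed terms; everything after that is routine semilinear Schr\"odinger theory.
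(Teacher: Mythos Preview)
Your proposal is correct and follows essentially the same approach as the paper: parts (i)--(iii) via classical semigroup theory for a locally Lipschitz nonlinearity in $H^s$ (the paper cites \cite{BS} Proposition~3 and \cite{CaHa} Section~4.3 for exactly this), part (iv) by proving conservation first for $U+H^{s+2}$ data and passing to the limit via the flow continuity, and part (v) by an $L^2$ energy/Gronwall argument (for which the paper cites \cite{BS} Lemma~3). You have simply unpacked what those references contain, and correctly identified the one nontrivial point, namely the bookkeeping showing that $F$ stabilizes $H^s$ despite $U\notin H^s$.
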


 The proof of Proposition
\ref{prop:BS} statements {\it i)} to {\it iii)}
 is similar to that of \cite{BS}
Proposition 3, and follows directly from classical semi-group theory with
locally lipschitz nonlinearities (see e.g. \cite{CaHa} Section 4.3).  For the
proof of {\it iv)}
 we invoke the conservation of energy for sufficiently regular
solutions (say in $U+H^{s+2}(\R^N)$) and then pass to the limit using
well-posedness in $U+H^s(\R^N)$.  This only requires $s>1.$ 
For the proof of {\it v)},
we refer to \cite{BS} Lemma 3.

\begin{rem}\label{rem:maximaltime}
In view of Proposition \ref{prop:BS}, if $s>1+N/2$ then for $\Psi^0$ in
$\mathcal{V} + H^{s+1}(\R^N)$ there exists a maximal time of existence
$T^s(\Psi^0)$ and a unique solution $u \in
\mathcal{C}([0,T^s(\Psi^0)),\Psi^0+H^{s+1}(\R^N))$ such that
$\Psi(0)=\Psi^0.$ Moreover, either
$$
T^s(\Psi^0)=+\infty \quad\text{or}\quad \limsup_{t\to T^s(\Psi^0)}
\|\Psi(t)-\Psi(0)\|_{H^{s+1}(\R^N)} = +\infty,
$$   
and the map $\Psi^0 \mapsto T^s(\Psi^0)$ is upper semi continuous for the
$H^{s+1}$ distance. 
\end{rem}

\section{Proof of Proposition \ref{prop:1}, and related results}

Setting $X\equiv (b,\Re z,\Im z)\in\R^{2N+1},$ system \eqref{eq:seven} may be recast in a more
abstract form as
\begin{equation}\label{eq:poireau}
\partial_t X  = \sum_{j=1}^N A_\eps^j \partial_j X + N_\eps(X)
\end{equation}
where the $(2N+1)-$matrices $A_\eps^j$ are symmetric, and represent the linear
one order terms of the r.h.s. of the system, whereas $N_\eps$ stands for the
nonlinear and second order terms.  The matrices $A_\eps^j$ are constant, and
contain terms which diverge as $\eps^{-1}.$ If the term $N_\eps$ were not
present in \eqref{eq:poireau}, then one would have a linear symmetric hyperbolic
system, and therefore conservation of all the $H^k$ norms of $X.$  Indeed, if
\begin{equation}\label{eq:poivron}
\partial_t Y  = \sum_{j=1}^N A_\eps^j \partial_j Y 
\end{equation}
then,
$$
\frac{1}{2}\frac d{dt} \|D^kY\|_{L^2}^2 = \int_{\R^N}
\langle D^k\partial_tY,D^kY\rangle =
\sum_{j=1}^N\int_{\R^N} \langle A_\eps^j \partial_j D^kY,D^kY\rangle,
$$
and,
 using  the symmetry of the matrices, 
\begin{equation}\label{eq:poivron1}
\sum_{j=1}^N\int_{\R^N} \langle A_\eps^j \partial_j D^kY,D^kY\rangle
=
\sum_{j=1}^N \int_{\R^N}\langle \partial_j D^kY,A_\eps^j D^kY\rangle
=-\sum_{j=1}^N \int_{\R^N}\langle  D^kY,A_\eps^j\partial_j D^kY\rangle.
\end{equation}
Therefore  $\|D^kY\|_{L^2}^2$ is time independent.
\smallbreak
Owing to the additional term $N_\eps(X),$
proving Sobolev estimates (or even energy estimates)
for \eqref{eq:poireau} is more involved. 
The reason why is that  the function $N_\eps$ 
contains terms of rather different nature
from the ``algebraic'' point of view:
\begin{itemize}
\item   semi-linear first order
terms, namely $-{\rm Re}z\nabla b - b{\rm div}({\rm Re}z),$ and
$-\nabla(\frac{z\cdot z}{2}),$ 
\item  the linear second order term $-i\Delta z.$
\end{itemize}
It is  not clear however that adding 
this latter terms to \eqref{eq:poivron} would not change 
the computation in
\eqref{eq:poivron1}. 
To deal with the  semi-linear first order terms,
we will have to introduce the quantity
 $\Gamma^s(b,z)$ which is different from
$\|D^sX\|_{L^2}$ since the $z$ part is weighted by the weight $1+
\frac{\eps}{\sqrt{2}}b.$ This weight
 plays somehow the role of a symmetrizer.  
To control  its  influence 
(in particular on the second order term), we
invoke the relation between the weight and $z$, namely
\begin{equation}\label{eq:pota}
-\nabla\Bigl(1+\frac{\eps}{\sqrt{2}}b\Bigr) = \Bigl(1+\frac{\eps}{\sqrt{2}}b\Bigr){\rm Im}z
\end{equation}
which, in some sense,  represents a gain of one derivative. 
 When $s$ is an integer, the
computation is a little more explicit.  Therefore we present that case
first.

\subsection{Proof when $s$ in an integer}\label{ss:integer}

In this paragraph, we assume that $s=k$ for some $k\in\N.$ 
Throughout, it is understood that 
for $z_1\in\C^N$ and $z_2\in\C^N$ the notation
$\langle z_1,z_2\rangle$ stands for the inner product 
in $\R^{2N}$ between the vectors $(\Re z_1,\Im z_1)$ and $(\Re z_2,\Im z_2).$
We first compute the time derivative of $\Gamma^k(b,z),$  namely we have
\begin{equation}\label{eq:boutin}\begin{split}
\frac{d}{dt} &\int_{\R^N} (1+\frac{\eps}{\sqrt{2}}b)\langle D^kz,D^kz\rangle +
\langle D^kb,D^kb\rangle\\
&= 2 \int_{\R^N} (1+\frac{\eps}{\sqrt{2}}b)\langle D^kz,D^k\partial_t z\rangle +
\langle D^kb,D^k\partial_t b\rangle  +  \int_{\R^N}
\frac{\eps}{\sqrt{2}}\partial_t b \langle D^kz,D^k z\rangle\\
&= I_1+I_2+I_3.
\end{split}\end{equation}
\noindent
{\bf Step 1:} Expansion of $I_1$ and $I_2$. 

In $I_1+I_2$, we replace $\partial_t z$ and
$\partial_t b$ by their values according to \eqref{eq:seven},  and expand the corresponding
expressions.  This yields
$$
I_1=2(I_{1,1}+ I_{1,2}+I_{1,3}+I_{1,4}+I_{1,5}) \qquad\text{and}\qquad I_2=2(I_{2,1}+ I_{2,2})
$$
where 
\begin{align*}
&I_{1,1} = \int_{\R^N} \langle D^k z, D^k(-\frac{\sqrt{2}}{\eps}\nabla b)\rangle,
&&I_{2,1} = \int_{\R^N} \langle D^k b, D^k(-\frac{\sqrt{2}}{\eps}{\rm div}({\rm
Re}z))\rangle,\\
&I_{1,2} = \int_{\R^N} b \langle D^k z, D^k(-\nabla b)\rangle,
&&I_{2,2} = \int_{\R^N} \langle D^k b, D^k(-{\rm div}(b{\rm
Re}z))\rangle.\\
&I_{1,3} = \int_{\R^N} \langle D^k z, D^k(i\Delta z)\rangle,\\
&I_{1,4} = \int_{\R^N} \frac{\eps}{\sqrt{2}} b \langle D^k z, D^k(i\Delta z)\rangle,\\
&I_{1,5} = \int_{\R^N} (1+\frac{\eps}{\sqrt{2}} b) \Big\langle D^k z,
D^k\big(-\nabla(\frac{z\cdot z}{2})\big)\Big\rangle,
\end{align*}

\noindent
{\bf Step 2:} Both $I_{1,3}$ and $I_{1,1}+I_{2,1}$ vanish. 

This is a consequence of the properties of the linear part of the equation as
explained before.  It follows by  integration by parts, and, for
$I_{1,1}+I_{2,1}$, from the fact that $b$ is real valued.

\noindent
{\bf Step 3:} Estimates for $I_{1,2}+I_{2,2}.$ 

Integrating by parts in $I_{2,2}$ then using Leibniz formula, we obtain
\begin{align*}
I_{1,2} + I_{2,2} &= \int_{\R^N} \langle D^k(\nabla b), D^k(b{\rm Re}z) -
bD^kz\rangle\\
&= \int_{\R^N} \langle D^k(\nabla b),D^kb\, {\rm Re}z\rangle  + \sum_{j=1}^{k-1}
\int_{\R^N} \langle D^k(\nabla b),D^jbD^{k-j} {\rm Re}z\rangle\\
&=  \int_{\R^N} \langle \nabla \frac{|D^kb|^2}{2}, {\rm Re}z\rangle -
\sum_{j=1}^{k-1}\int_{\R^N} \langle D^k b,{\rm div}(D^jbD^{k-j} {\rm Re}z)\rangle\\
&= - \int_{\R^N} \frac{|D^kb|^2}{2}\, {\rm div}({\rm Re}z) -
\sum_{j=1}^{k-1}\int_{\R^N} \langle D^k b,{\rm div}(D^jbD^{k-j} {\rm
Re}z)\rangle.
\end{align*}
For the first term, we write
$$
\left|\int_{\R^N} |D^kb|^2\, {\rm div}({\rm Re}z)
\right| \leq \|Dz\|_{L^\infty} \|b\|_{H^k}^2.
$$
In order to bound  the second term , one may rely on  
 Lemma \ref{lem:A.1} in the Appendix which yields, for
$j=1,\cdots,k-1,$ $$
\int_{\R^N}| \langle D^k b,{\rm div}(D^jbD^{k-j}| \leq C
\|(Db,Dz)\|_{L^\infty}\left( \|b\|_{H^k}^2 + 
\|z\|_{H^k}^2\right).
$$
Combining the two last inequalities we obtain
\begin{equation}\label{eq:boutin2}
|I_{1,2}+I_{2,2}| \leq C \|(Db,Dz)\|_{L^\infty}\left( \|b\|_{H^k}^2 +
\|z\|_{H^k}^2\right).
\end{equation}

\noindent
{\bf Step 4:} Estimates for $2I_{1,4}+2I_{1,5}+I_3.$ 

The sum of
 these three terms presents a remarkable compensation.  Indeed, integrating by
parts in $I_{1,4}$ we obtain
$$
I_{1,4} =- \int_{\R^N} \frac{\eps}{\sqrt{2}} \nabla b \langle D^k z, D^k(i\nabla
z)\rangle, 
$$ 
where we used the pointwise identity $\langle D^k(\nabla z),D^k (i\nabla
z)\rangle = 0.$ 

Using identity \eqref{eq:pota}, we are led to
$$
I_{1,4} = \int_{\R^N} (1+ \frac{\eps}{\sqrt{2}}b) \langle D^k z, D^k(i\nabla
z)\rangle {\rm Im}z. 
$$
Next, we turn to $I_{1,5}.$  First, expanding $\nabla(\frac{z\cdot z}{2}),$ we
get
\begin{align*}
I_{1,5} &= -\int_{\R^N} (1+\frac{\eps}{\sqrt{2}} b) \langle D^k z,
D^k(z\cdot \nabla z)\rangle\\
&= -\int_{\R^N} (1+\frac{\eps}{\sqrt{2}} b) \langle D^k z,
D^k(\nabla z)\cdot z\rangle - \sum_{j=0}^{k-1} \int_{\R^N} (1+\frac{\eps}{\sqrt{2}} b) \langle D^k z,
D^j(\nabla z)\cdot D^{k-j}z\rangle\\
&= I_{1,5}' + I_{1,5}''.
\end{align*}
Relying  once more on  Lemma \ref{lem:A.1} of the Appendix, we obtain for
$j=0,\cdots,k-1,$
\begin{equation}\label{eq:boutin3}
I_{1,5}'' \leq C \bigl(1+ \eps \|b\|_{L^\infty}\bigr) \|Dz\|_{L^\infty}
\|z\|_{H^k}^2.
\end{equation}
To estimate the first term $I_{1,5}'$, we use the algebraic identity
$$
\langle z_1,\zeta\,z_2\rangle = \langle z_1,z_2\rangle {\rm Re}\,\zeta + \langle
z_1, i z_2\rangle {\rm Im}\,\zeta\qquad \forall z_1,z_2 \in \mathbb{C}^N,\forall
\zeta \in \mathbb{C}. 
$$
This yields for all $j\in\{1,\cdots,N\},$
\begin{align*}
(1+\frac{\eps}{\sqrt{2}}b)
\langle D^k z,D^k(\partial_jz)\cdot z^j\rangle & = 
(1+\frac{\eps}{\sqrt{2}}b)\left[ \langle D^k z,D^k(\partial_jz)\rangle
 {\rm Re}\,z^j + 
\langle D^k z,D^k(i\partial_jz)\rangle {\rm Im}\,z^j\right]\\
&= (1+\frac{\eps}{\sqrt{2}}b)\left[ {\rm Re}\,z^j \partial_j (\frac{|D^k z|^2}{2}) +
\langle D^k z,D^k(i\partial_jz)\rangle {\rm Im}\,z^j\right]
\end{align*}
so that, integrating by parts in the first integral, 
\begin{align*}
2\bigl(I_{1,5}'+I_{1,4}\bigr)+I_3 &= - \int_{\R^N} (1+\frac{\eps}{\sqrt{2}}b)\Re z\cdot \nabla
|D^k z|^2+  \int_{\R^N}
\frac{\eps}{\sqrt{2}}\partial_t b \langle D^kz,D^k z\rangle \\ 
&= \int_{\R^N} {\rm div}\biggl(\Bigl(1+\frac{\eps}{\sqrt{2}}b\Bigr) \Re  z\biggr)\,|D^kz|^2
+  \int_{\R^N}\frac{\eps}{\sqrt{2}}\partial_t b\,|D^kz|^2.
\end{align*}
Since system \eqref{eq:seven} is satisfied, one can now conclude that 
\begin{equation}\label{eq:boutin4}
2I_{1,5}'+2I_{1,4}+I_3=0.
\end{equation}
\noindent
{\bf Step 5:} Proof of Proposition \ref{prop:1} completed when $s$ is an
integer.

Under condition \eqref{eq:novortex}, there exists a constant $C$ depending only on 
$k,$ $m$ and such that 
\begin{equation}\label{eq:victor0}
\|(b,z)\|_{H^k}^2\leq C\bigl(E_\eps(\Upsilon_\eps)+\Gamma^k(b,z)\bigr).
\end{equation}
Hence, combining 
\eqref{eq:boutin}, \eqref{eq:boutin2}, \eqref{eq:boutin3} and \eqref{eq:boutin4}
completes the proof.\qed


\subsection{Generalization of Proposition \ref{prop:1}}\label{ss:besov}

In this section, we extend  Proposition \ref{prop:1}
to the case of Sobolev spaces with noninteger exponents.
The proof that we propose is based on a Littlewood-Paley decomposition
and actually covers the case of Besov spaces $B^s_{2,r}$ as well. 

We first recall the notion of Littlewood-Paley decomposition. 
Let $(\chi,\varphi)$ being smooth compactly supported functions
such that 
\begin{enumerate}
\item $\chi$ is supported in $B(0,4/3)$, 
\item $\varphi$ is supported in the annulus $C(0,3/4,8/3)$,
\item $\forall\xi\in\R^N,\;
\chi(\xi)+\sum_{q\in\N}\varphi(2^{-q}\xi)=1$.
\end{enumerate} 
We denote\footnote{According to a classical convention,
$\psi(D)$ will  stand for the Fourier multiplier  of symbol
$\psi(\xi)$.}  $S_q:=\chi(2^{-q}{\rm D}),$
$\dq:=\varphi(2^{-q}{\rm D})$ for
$q\in\N,$ and $\Delta_{-1}:=S_0=\chi({\rm D}).$
We have 
$S_q=\sum_{p=-1}^{q-1}\Delta_p$ and 
$u=\sum_{q\geq-1}\dq u$
whenever $u$ is in ${\mathcal S}'(\R^N)$.  
Moreover, we have 
\begin{equation}\label{quasiorthogonality}
|p-q|>1\Longrightarrow
\dq\Delta_pu=0\quad\text{and}\quad|p-q|>4\Longrightarrow
\dq(S_{p-1}u\Delta_pv)=0.
\end{equation}
The Littlewood-Paley decomposition is defined by the identity
$$
u=\sum_{q\geq-1}\Delta_qu
$$
and makes sense for arbitrary  tempered distributions.
Furthermore, it is not difficult to check that $H^s(\R^N)$ coincides with 
the space of tempered distributions $u$ such that 
$$
\biggl(\sum_{q\geq-1}2^{2qs}\|\dq u\|_{L^2}^2\biggr)^{\frac12}<\infty
$$
and the left-hand side  of this inequality defines a norm on $H^s(\R^N)$ which 
is equivalent to the usual one. 
More generally,  one can  define the Besov space $B^s_{2,r}(\R^N)$
as the set of tempered distributions $u$ 
such that
$$
\|u\|_{B^s_{2,r}}:=\bigl\|2^{qs}\|\dq u\|_{L^2}\|_{\ell^r}<\infty.
$$
For $r=2$, we recover the usual Sobolev
space since $H^s(\R^N)= B^s_{2,2}(\R^N)$ with equivalent norms.
\medbreak
The remainder of this  section is devoted to the proof 
of the following proposition.
\begin{prop}\label{prop:1besov}
Let $s>0$ and $r\in[1,\infty].$
Assume that  $\Upsilon_\eps$ is a
solution to $\eqref{eq:habit}$ such that $(b,z) \in
\mathcal{C}^1([0,T];B^{s+1}_{2,r})\cap\mathcal{C}^0([0,T];W^{1,\infty})$ 
for some $T>0,$ and that
$m:=\inf_{x,t}|\Upsilon_\eps(x,t)| >0.$ 
 There exists a constant $K$ depending only
on $m,$ $s$ and $N$ such for any time $t\in[0,T]$ we have
\begin{equation}
\label{eq:besov1}
\frac d{dt}\int \Bigl(1+\frac{\eps}{\sqrt2}b\Bigr)2^{2qs}|\dq z|^2
+2^{2qs}|\dq b|^2\leq 
Kc_q(1+\eps\|b\|_{L^\infty})
\|(Db,Dz)\|_{L^\infty}
\|(Db,Dz)\|_{B^{s-1}_{2,r}}
\end{equation}
where the sequence $(c_q)_{q\geq-1}$
satisfies $\|(c_q)\|_{\ell^r}=1.$
\end{prop}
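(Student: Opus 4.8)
The plan is to transpose the energy computation of Subsection~\ref{ss:integer} to each dyadic block. Fix $q\geq-1$ and apply $\dq$ to system \eqref{eq:seven}. Since $\dq$ is a Fourier multiplier with a constant real even symbol, it commutes with the singular terms $\tfrac{\sqrt2}{\eps}\nabla$, $\tfrac{\sqrt2}{\eps}\div$, with the Laplacian $i\Delta$, and with complex conjugation (so $\Re\dq z=\dq\Re z$); hence $(\dq b,\dq z)$ satisfies exactly system \eqref{eq:seven}, with $(b,z)$ replaced by $(\dq b,\dq z)$ in the linear part, up to two commutator source terms: $-\div\bigl([\dq,b]\,\Re z\bigr)$ in the first equation, arising from $\dq\div(b\,\Re z)=\div(b\,\Re\dq z)+\div\bigl([\dq,b]\Re z\bigr)$, and in the second equation the remainder produced by writing $\dq\bigl(z\cdot\nabla z\bigr)=z\cdot\nabla\dq z+\sum_{j=1}^N[\dq,z_j]\partial_j z$. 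This is precisely the splitting that makes the integer-exponent cancellations reusable.

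Next I would compute $\tfrac{d}{dt}\!\int\bigl(1+\tfrac{\eps}{\sqrt2}b\bigr)|\dq z|^2+|\dq b|^2$ by running the five steps leading to \eqref{eq:boutin}--\eqref{eq:boutin4} with $D^k$ replaced by $\dq$. Steps~2 and~4 carry over verbatim, because they only exploit the algebraic structure of the \emph{linear}, constant-coefficient part of \eqref{eq:seven}, which $\dq$ leaves untouched: the singular $\tfrac1\eps$ contributions cancel by skew-symmetry and because $b$ is real; the term $\int\langle\dq z,i\Delta\dq z\rangle$ vanishes after one integration by parts; and the combination of the $\eps$-weighted Laplacian term $\tfrac{\eps}{\sqrt2}\!\int b\,\langle\dq z,i\Delta\dq z\rangle$, of the top-order part $-\!\int(1+\tfrac{\eps}{\sqrt2}b)\langle\dq z,(z\cdot\nabla)\dq z\rangle$ of the quadratic term, and of $\int\tfrac{\eps}{\sqrt2}(\partial_t b)\,|\dq z|^2$ cancels identically, using \eqref{eq:pota} and the first equation of \eqref{eq:seven} to write $\tfrac{\eps}{\sqrt2}\partial_t b=-\div\bigl((1+\tfrac{\eps}{\sqrt2}b)\Re z\bigr)$, exactly as in the derivation of \eqref{eq:boutin4}. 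None of these cancellations feels the frequency cut-off.

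What remains to control is, besides the two commutator source terms, the analogue of $I_{1,2}+I_{2,2}$ --- which, after integrations by parts and using $\Re\dq z=\dq\Re z$, reduces to $\int\nabla\dq b\cdot[\dq,b]\,\Re z$ --- and the analogue of $I_{1,5}''$, namely $-\!\int(1+\tfrac{\eps}{\sqrt2}b)\langle\dq z,\sum_j[\dq,z_j]\partial_j z\rangle$. Each of these is a pairing of $\dq b$ or $\dq z$ against a genuine commutator, and is estimated through the Littlewood--Paley commutator and product estimates --- the Besov counterpart of Lemma~\ref{lem:A.1}, proved by Bony's decomposition and Bernstein's inequalities. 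This is where the hypothesis $s>0$ enters, and each such estimate delivers a gain of exactly one derivative together with an $\ell^r(q)$-summable coefficient $c_q$; summing the contributions and multiplying by $2^{2qs}$ one bounds $\tfrac{d}{dt}\bigl(\int(1+\tfrac{\eps}{\sqrt2}b)2^{2qs}|\dq z|^2+2^{2qs}|\dq b|^2\bigr)$ by $c_q$ times $(1+\eps\|b\|_{L^\infty})\|(Db,Dz)\|_{L^\infty}$ times $\|(Db,Dz)\|_{B^{s-1}_{2,r}}$ (a factor $2^{qs}\|\dq(b,z)\|_{L^2}$ being absorbed by Cauchy--Schwarz into the normalised sequence). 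The weight $1+\tfrac{\eps}{\sqrt2}b$ never carries a derivative --- that is exactly the purpose of \eqref{eq:pota} in Step~4 --- so it only yields the multiplicative factor $1+\eps\|b\|_{L^\infty}$, while its two-sided bounds, which follow from the no-vortex assumption \eqref{eq:novortex} through $m$, are used only to compare the weighted quantities with plain $B^s_{2,r}$ norms.

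I expect the main obstacle to be the bookkeeping of that last step: one must check that \emph{every} remainder is controlled with a loss of only one derivative, so that $\|(Db,Dz)\|_{B^{s-1}_{2,r}}$ --- and not $\|(b,z)\|_{B^{s+1}_{2,r}}$ --- appears on the right, which forces the use of the commutator structure rather than of mere product laws and accounts for the restriction $s>0$; and one must coalesce the several $\ell^r$-summable sequences produced along the way into a single $(c_q)$ with $\|(c_q)\|_{\ell^r}=1$. The low-frequency block $q=-1$ has to be handled apart, but there $\Delta_{-1}=\chi({\rm D})$ is a smoothing low-pass filter, the singular terms still cancel by skew-symmetry, and crude product estimates suffice. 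Once \eqref{eq:besov1} is established, a summation in $\ell^r$ and a Gronwall argument close the estimate for $\|(b,z)\|_{B^s_{2,r}}$ as in the integer case.
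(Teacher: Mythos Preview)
Your outline is the paper's strategy: replace $D^k$ by $\dq$, recover all the structural cancellations of Subsection~\ref{ss:integer} (the skew-symmetry of the singular terms, the vanishing of $\int\langle\dq z,i\Delta\dq z\rangle$, and the exact cancellation $2{I'}_{1,5}^q+2I_{1,4}^q+I_3^q=0$ via \eqref{eq:pota} and the first equation of \eqref{eq:seven}), and bound the residual commutators by the Littlewood--Paley commutator lemma (Lemma~\ref{l:alamormoilnoeud} in the paper). Your treatment of the analogue of $I_{1,5}''$ is correct and matches the paper exactly.

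There is one place where your write-up is not yet sufficient. You say the analogue of $I_{1,2}+I_{2,2}$ ``reduces to $\int\nabla\dq b\cdot[\dq,b]\Re z$'' and then treat it as ``a pairing of $\dq b$ or $\dq z$ against a genuine commutator''. But the pairing is against $\nabla\dq b$, not $\dq b$: bounding it by $\|\nabla\dq b\|_{L^2}\|[\dq,b]\Re z\|_{L^2}$ and applying the commutator lemma with $a=b$, $f=\Re z$ gives
\[
c_q\,2^{-qs}\cdot 2^{q}\|\dq b\|_{L^2}\bigl(\|Db\|_{L^\infty}\|z\|_{B^{s-1}_{2,r}}+\|z\|_{L^\infty}\|Db\|_{B^{s-1}_{2,r}}\bigr),
\]
so that after multiplication by $2^{2qs}$ you carry a factor $2^{q(s+1)}\|\dq b\|_{L^2}$, which is one derivative over budget (it lives at the level of $\|b\|_{B^{s+1}_{2,r}}$, not of $\|Db\|_{B^{s-1}_{2,r}}$). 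The paper fixes this by integrating by parts once more, $\int\nabla\dq b\cdot[\dq,b]\Re z=-\int\dq b\,\div\bigl([\dq,b]\Re z\bigr)$, and then expanding the divergence to produce the two commutators $[b,\dq]\div\Re z$ and $[\Re z,\dq]\cdot\nabla b$ --- now with a derivative \emph{inside} --- plus the harmless terms $\int\dq b\,\nabla b\cdot\dq\Re z$ and $\tfrac12\int(\dq b)^2\div\Re z$. Applying Lemma~\ref{l:alamormoilnoeud} with $f=\div\Re z$ (resp.\ $f=\nabla b$) then gives precisely $c_q 2^{-qs}\|\dq b\|_{L^2}\,\|(Db,Dz)\|_{L^\infty}\|(Db,Dz)\|_{B^{s-1}_{2,r}}$, and your outline closes. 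With this extra integration by parts your proof coincides with the paper's.
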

\begin{rem}\label{r:besov}
Remark that if we assume that 
$$
|\Upsilon_\eps(x,t)|^{\pm1}\leq M\quad\hbox{for all }\ (x,t)\in\R^N\times[0,T],
$$
then a $\ell^r$ 
 summation and a time integration in \eqref{eq:besov1}
  implies that we have for some constant
  $K$ depending only on $M,$ $s$ and $N,$
 $$
\|(b,z)(t)\|_{B^s_{2,r}}
 \leq K \biggl(\|(b,z)(0)\|_{B^s_{2,r}}
 +\int_0^t \|(Db,Dz)(\tau)\|_{L^\infty}\|(b,z)(\tau)\|_{B^s_{2,r}}\,d\tau\biggr).
 $$
 In particular, taking $r=2$ yields
 $$\|(b,z)(t)\|_{H^s}
 \leq K\biggl( \|(b,z)(0)\|_{H^s}
 +\int_0^t \|(Db,Dz)(\tau)\|_{L^\infty}\|(b,z)(\tau)\|_{H^s}\,d\tau\biggr).
 $$ 
\end{rem}
\begin{proof}
The proof works follows almost the same lines as the case  in the Sobolev case 
with integer exponents:  the main point is  to replace
the differential operator $D^k$ by 
the  Littlewood-Paley operator $\dq.$
Throughout the  computation, several  commutators
will appear, which may be dealt with thanks
to Lemma \ref{l:alamormoilnoeud}. The starting point is the following computation 
\begin{equation*}\begin{split}
\frac{d}{dt} &\int_{\R^N} \biggl\{(1+\frac{\eps}{\sqrt{2}}b)\langle\dq z,\dq
z\rangle + |\dq b|^2\biggr\}\\
&= 2 \int_{\R^N} \biggl\{(1+\frac{\eps}{\sqrt{2}}b)\langle \dq z,\dq\partial_t
z\rangle + \dq b\,\dq\partial_t b  \biggr\}+  \int_{\R^N}
\frac{\eps}{\sqrt{2}}\partial_t b \langle\dq z,\dq z\rangle\\
&= I_1^q+I_2^q+I_3^q.
\end{split}\end{equation*}
As in Section \ref{ss:integer}, we split $I_1^q$ and $I_2^q$ into
$$
I_1^q=2(I_{1,1}^q+ I_{1,2}^q+I_{1,3}^q+I_{1,4}^q+I_{1,5}^q) \qquad
\text{and}\qquad I_2^q=2(I_{2,1}^q+ I_{2,2}^q)
$$
where 
\begin{align*}
&I_{1,1} ^q= \int_{\R^N} \langle \dq z, \dq(-\frac{\sqrt{2}}{\eps}\nabla b)\rangle,
&&I_{2,1}^q = \int_{\R^N} \dq b\, \dq(-\frac{\sqrt{2}}{\eps}{\rm div}({\rm
Re}z)),\\
&I_{1,2}^q = \int_{\R^N} 
 b \langle\dq z, \dq(-\nabla b)\rangle,
&&I_{2,2}^q = \int_{\R^N}\dq b\,\dq(-{\rm div}(b{\rm Re}z)).\\
&I_{1,3}^q = \int_{\R^N} \langle\dq z, \dq(i\Delta z)\rangle,\\
&I_{1,4}^q = \int_{\R^N} \frac{\eps}{\sqrt{2}} b \langle \dq z, \dq(i\Delta z)\rangle,\\
&I_{1,5} ^q= \int_{\R^N} (1+\frac{\eps}{\sqrt{2}} b) \langle\dq z,\dq(-\nabla(\frac{z\cdot z}{2})\rangle,\\
\end{align*}
As in Section \ref{ss:integer}, both $I_{1,3}^q$ and
$I_{1,1}^q+I_{2,1}^q$ vanish. Next, in order to deal with 
$I_{1,2}^q+I_{2,2}^q,$ one may integrate by parts in $I_{2,2}^q.$ 
We find that
\begin{align*}
I_{1,2}^q + I_{2,2}^q &= \int_{\R^N} \langle \nabla\dq b,\dq(b{\rm Re}z) -
b\dq \Re z\rangle\\
&= \int_{\R^N}\!\dq b\bigl(b\,\dq\div\Re z -\dq (b\,\div\Re z)\bigr)  +
\int_{\R^N}\!\dq b\,\bigl(\nabla b\cdot\dq\Re z-\dq(\nabla b\cdot\Re z)\bigr)\\
&=  \int_{\R^N}  \dq b\,[b,\dq]\div\Re z+
\int_{\R^N} \dq b\,\nabla b\cdot\dq\Re z\\&\hspace{5cm}-
\int_{\R^N}\dq b\,\dq\nabla b\cdot\Re z
+\int_{\R^N}\dq b[\Re z,\dq]\cdot\nabla b
\\
&=  \int_{\R^N}  \dq b[b,\dq]\div\Re z +
\int_{\R^N}\dq b\,\nabla b\cdot\dq\Re z\\&\hspace{5cm}+\frac12
\int_{\R^N}(\dq b)^2\div\Re z
+\int_{\R^N}\dq b\,[\Re z,\dq]\cdot\nabla b.
\end{align*}
For the second and third term, we have
$$\begin{array}{lll}
\biggl|\Int_{\R^N} \dq b\,\nabla b\cdot\dq\Re z\biggr|
&\leq&\|Db\|_{L^\infty}\|\dq b\|_{L^2}\|\dq z\|_{L^2},\\[2ex]
\biggl|\Int_{\R^N} |\dq b|^2\, {\rm div}{\rm Re}z
\biggr| &\leq& \|\div z\|_{L^\infty} \|\dq b\|_{L^2}^2.\end{array}
$$
The first and last  terms may be bounded according to Lemma \ref{l:alamormoilnoeud}.
We find that for some sequence $(c_q)_{q\geq-1}$ such that 
$\|(c_q)\|_{\ell^r}=1,$
$$
\begin{array}{lll}
\left \vert  \Int_{\R^N} \dq b\,[b,\dq]\div\Re z  \ \right  \vert &\leq&
 Cc_q2^{-qs}\bigl(\|Db\|_{L^\infty}\|\div z\|_{B^{s-1}_{2,r}}
 +\|\div z\|_{L^\infty}\|Db\|_{B^{s-1}_{2,r}}\bigr)\|\dq b\|_{L^2}, \\[2ex]
\left \vert  \Int_{\R^N}\dq b\,[\Re z,\dq]\nabla b  \ \right  \vert &\leq&Cc_q2^{-qs}
 \bigl(\|Dz\|_{L^\infty}\|Db\|_{B^{s-1}_{2,r}}
 +\|Db\|_{L^\infty}\|Dz\|_{B^{s-1}_{2,r}}\bigr)\|\dq b\|_{L^2}.
\end{array}
$$
Combining the previous   inequalities,  we obtain
\begin{equation}\label{eq:besov2}
|I_{1,2}^q+I_{2,2}^q| \leq Cc_q2^{-qs}
 \bigl(\|Dz\|_{L^\infty}\|Db\|_{B^{s-1}_{2,r}}
 +\|Db\|_{L^\infty}\|Dz\|_{B^{s-1}_{2,r}}\bigr)\|\dq b\|_{L^2}.
 \end{equation}
To finish with, let us prove that
 \begin{equation}\label{eq:boutin6}
|2I_{1,4}^q+2I_{1,5}^q+I_3^q|
\leq Cc_q\bigl(1+\eps\|b\|_{L^\infty}\bigr) \|Dz\|_{L^\infty}
\|Dz\|_{B^{s-1}_{2,r}}\|\dq z\|_{L^2}.
\end{equation}
Integrating by
parts in $I_{1,4}^q,$  and using
the pointwise identity $\langle \dq\nabla z,\dq (i\nabla z)\rangle = 0$ and 
 \eqref{eq:pota},  we  derive   the identity 
$$
I_{1,4}^q = \int_{\R^N} (1+ \frac{\eps}{\sqrt{2}}b) \langle \dq z, \dq(i\nabla
z)\rangle {\rm Im}z. 
$$
Next, expanding $\nabla(\frac{z\cdot z}{2}),$ we
are led to
\begin{align*}
I_{1,5}^q &= -\int_{\R^N} (1+\frac{\eps}{\sqrt{2}} b) \langle \dq z,
\dq(\nabla z)\cdot z\rangle 
+ \int_{\R^N} (1+\frac{\eps}{\sqrt{2}} b) \langle \dq z,
\dq\nabla z\cdot z-\dq(z\cdot\nabla z)\rangle\\
&= {I'}^q_{1,5} + {I''}^q_{1,5}.
\end{align*} 
On the one hand,
Lemma \ref{l:alamormoilnoeud} ensures that ${I''}^q_{1,5}$ may
be bounded by the right-hand side of \eqref{eq:boutin6}. 
On the other hand,  mimicking the computations made in Section
\ref{ss:integer}, we get
\begin{align*}
2\bigl({I'}^q_{1,5}+I^q_{1,4}\bigr)+I^q_3 &= - \int_{\R^N}
(1+\frac{\eps}{\sqrt{2}}b)\Re z\cdot \nabla |\dq z|^2+  \int_{\R^N}
\frac{\eps}{\sqrt{2}}\partial_t b \langle \dq z,\dq z\rangle \\ 
&= \int_{\R^N} {\rm div}\biggl(\Bigl(1+\frac{\eps}{\sqrt{2}}b\Bigr) \Re  z\biggr)\,|\dq z|^2
+  \int_{\R^N}\frac{\eps}{\sqrt{2}}\partial_t b\,|\dq z|^2
\end{align*}
so that, since system \eqref{eq:seven} is satisfied, 
$$
2{I'}^q_{1,5}+2I^q_{1,4}+I^q_3=0.
$$
This completes the proof of \eqref{eq:boutin6}, 
and thus of \eqref{eq:besov1}.
\end{proof}


\section{Proof of Theorems \ref{thm:un} and \ref{thm:deux}}

We first notice that by Sobolev embedding and the definition of $a_\eps$ there exists
	 a constant $C_1(s,N)\geq 1$ independent of
	 $\eps$ such that if $C_1(s,N)\eps \|a_\eps\|_{H^{s+1}} \leq 1$ then 
	 \begin{equation}\label{eq:novortex1}
	 \rho_\eps^{\pm1}\leq2.
	 \end{equation}
	 	The constant $C(s,N)$ will be required to satisfy $C(s,N) > C_1(s,N),$ so that in particular $\rho_\eps^{\pm1}(\cdot,0)<{2}.$ If we denote by $\Psi^0$ the corresponding initial 
datum for $(GP)$ then one may prove that $\Psi^0\in\mathcal{V}+H^{s+1}(\R^N).$ 
In fact, it turns out that for any 
 smooth nonnegative function $\alpha$ compactly supported in $\R^N$ and satisfying
$\int \alpha =1$ the function  $U:= \Psi * \alpha$
 belongs to $\mathcal{V}$ and $\Psi^0-U$
belongs to $H^{s+1}(\R^N)$ (see e.g. \cite{Gal}).  
Therefore, by virtue of  Proposition \ref{prop:BS}, equation $(GP)$ 
possesses a unique solution $\Psi$ in $\Psi^0 + H^{s+1}$ on some time interval $[0,T]$. 
\medbreak

\noindent{\bf Proof of  Theorem \ref{thm:un}}

\noindent{\bf Step 1: } 	
	In a first step, we assume that in addition 
	$(a_\eps^0,u_\eps^0) \in H^{s+3}\times H^{s+2}$. By 
	Proposition \ref{prop:BS} {\it iii)} combined with an appropriate 
	change of variable, equation \eqref{eq:habit} has 
	 a unique maximal  solution $\Upsilon_\eps$
 in $\mathcal V+\mathcal{C}^1([0,T^s);H^{s+1}(\R^N)).$  We introduce the stopping time
$$
t_0 = {\rm Sup}\left\{ 0\leq t<T^s \quad\text{s.t.}\quad 
\rho_\eps^{\pm1} \leq{2}\quad \text{and}\quad A(\tau) \leq 2 A(0),\quad  \forall \tau \in
[0,t]\right\},
$$
where we have set\footnote{For expository purposes, 
we assume here that $s$ is an integer number.}
$$
A(t) :=  \Gamma^s(b(\cdot,t),z(\cdot,t))+E_\eps(\Upsilon_\eps(\cdot, t)).
$$

By continuity and the fact that $\rho(\cdot,0) >\frac 1 2,$ we have $t_0>0.$
Next, we apply Proposition \ref{prop:1}
 on the interval $[0,t_0)$, which yields the
inequality
$$
\frac{d}{dt}\Gamma^s(b,z) \leq C_2(s,N) \|(Db,Dz)\|_{L^\infty}\left(
\Gamma^s(b,z)+E_\eps(\Upsilon_\eps)\right).
$$
On the one hand, by conservation of energy, we have on $[0,T)$,
$$
\frac{d}{dt}E_\eps(\Upsilon_\eps) = 0.
$$
On the other hand, by Sobolev embedding and \eqref{eq:victor0}, we have
$$
\|(Db,Dz)\|_{L^\infty}^2 \leq C_3(s,N)\left( \Gamma^s(b,z)+E_\eps(\Upsilon_\eps)\right).
$$
Therefore, after summation
we are led to
$$
\frac{d}{dt} A(t) \leq C_4(s,N) A(t)^{3/2}\qquad\text{on }\ [0,t_0).
$$
Integrating this inequality we obtain   
$$
A(t) \leq \frac{A(0)}{(1-C_4(s,N)\sqrt{A(0)}t/2)^2}\quad\hbox{whenever }\ t<t_1:=\min(t_0, \frac{2}{\sqrt{A(0)C_4}}).
$$
Notice that, owing to \eqref{eq:novortex1} and to the definition of $\Gamma^s,$ we have
\begin{equation}\label{eq:novortex2}
\frac12\|(a_\eps^0,u_\eps^0)\|_{H^{s+1}\times H^s}\leq
\sqrt{A(0)}\leq 2  \|(a_\eps^0,u_\eps^0)\|_{H^{s+1}\times H^s}.
\end{equation}
Therefore, choosing $C(s,N)$ sufficiently large, we have, for $t\leq t_*:=
\frac{1}{C(s,N)\|a_\eps^0,u_\eps^0\|_{H^{s+1}\times H^s}},$
$$
C_4(s,N) \sqrt{A(0)} t/2 \leq C_5(s,N) \|(a_\eps^0,u_\eps^0)\|_{H^{s+1}\times H^s}
\cdot \frac{1}{C(s,N)\|(a_\eps^0,u_\eps^0)\|_{H^{s+1}\times H^s}} \leq \frac{1}{2}, 
$$
so that 
$$
A(t) \leq 2 A(0)\quad\hbox{whenever }\ t\leq \min(t_0,t_*). 
$$
For such $t$, we then have
$$
\eps \|(a_\eps(\cdot,\frac t \eps),u_\eps(\cdot, \frac t \eps)\|_{H^{s+1}\times H^s}
 \leq C_6(s,N) \eps \|(a_\eps^0,u_\eps^0\|_{H^{s+1}\times H^s}
  \leq \frac{C_6(s,N)}{C(s,N)}
$$
so that
condition \eqref{eq:novortex1} is satisfied  provided $C(s,N)$ is chosen sufficiently large. 
It follows that $t_0>t_*$. The case where $s\not\in\N$  follows from the same arguments. It suffices to apply Proposition \ref{prop:1besov} with $r=2$
instead of Proposition \ref{prop:1}. The conclusion in Theorem \ref{thm:un} therefore holds in the case considered in this step.

\medbreak
\noindent{\bf Step 2:} The general case.  In order to prove Theorem \ref{thm:un} in the general case, we  mollify the inital datum by an approximation of the identity and then  rely on Case 1 and the continuity of the flow map on $\mathcal{C}^0([0,T];\Psi^0+H^{s+1}).$ 
The details are standard and left to the reader. \qed  

\medbreak\noindent{\bf Proof of Theorem \ref{thm:deux}}

We notice that $(\tilde a_\eps,\tilde u_\eps):=(a_\eps,u_\eps)-(\mathfrak a,\mathfrak u)$
satisfies the wave equation
$$
\left\{
\begin{array}{l}
\displaystyle
\partial_t \tilde a_\eps + \sqrt{2}\, {\rm div}\,\tilde u_\eps = \eps\,\div f^1_\eps,\\
\displaystyle
\partial_t \tilde u_\eps + \sqrt{2}\, \nabla\tilde a_\eps = \eps\,\nabla\bigl(g^1_\eps
+2 g^2_\eps\bigr)
\end{array}
\right.
$$ 
with null initial datum and
$$
f^1_\eps:=-a_\eps u_\eps,\quad
g^1_\eps:=-|u_\eps|^2\ \hbox{ and }\ 
g^2_\eps:=\frac{\Delta\sqrt{\sqrt 2+\eps a_\eps}}
{\sqrt{\sqrt 2+\eps a_\eps}}\cdotp
$$
Using basic energy estimates for the wave equation, we readily get
\begin{equation}\label{eq:diff0}
\|(\tilde a_\eps,\tilde u_\eps)(t)\|_{H^{s-2}}
\leq \int_0^t\bigl(\|f^1_\eps\|_{H^{s-1}}
+\|g^1_\eps\|_{H^{s-1}}+2\|g^2_\eps\|_{H^{s-1}}\bigr)\,d\tau.
\end{equation}
Now, as $H^{s-1}$ is an algebra, one can write
$$
\|f^1_\eps\|_{H^{s-1}}\leq C\|a_\eps\|_{H^{s-1}}\|u_\eps\|_{H^{s-1}}\ \hbox{ and }\
\|g^1_\eps\|_{H^{s-1}}\leq C\|u_\eps\|_{H^{s-1}}^2,
$$
whence, according to Theorem \ref{thm:un}, 
\begin{equation}\label{eq:diff1}
\|f^1_\eps\|_{H^{s-1}}
+\|g^1_\eps\|_{H^{s-1}}\leq 
C\|(a_\eps^0,u_\eps^0)\|^2_{H^{s+1}\times H^s}\quad\hbox{for all }\ 
t\in[0,T_\eps).
\end{equation}
In order to bound the last term in \eqref{eq:diff0}, we notice that,
under condition \eqref{eq:novortex1}, 
there exist two smooth functions $K_1$ and $K_2$ vanishing at $0$ and such that
$$
g_\eps^2=\frac\eps4\Delta a_\eps+\eps K_1(\eps a_\eps)\Delta a_\eps
+\eps \nabla a_\eps\cdot\nabla K_2(\eps a_\eps).
$$
Therefore, applying Proposition \ref{p:compo} yields
$$
\|g_\eps^2\|_{H^{s-1}}\leq C\bigl(\eps\|a_\eps\|_{H^{s+1}}
+\eps^2\|a_\eps\|_{H^{s-1}}\|a_\eps\|_{H^{s+1}}
+\eps^2\|a_\eps\|_{H^s}^2\bigr),
$$
so that, using the bounds provided by Theorem \ref{thm:un}, 
\begin{equation}\label{eq:diff2}
\|g^2_\eps\|_{H^{s-1}}
\leq  C\bigl(\eps\|(a_\eps^0,u_\eps^0)\|_{H^{s+1}\times H^s}
+\eps^2\|(a_\eps^0,u_\eps^0)\|_{H^{s+1}\times H^s}^2\bigr)
\quad\hbox{for all }\ 
t\in[0,T_\eps).
\end{equation}
Using inequalities \eqref{eq:diff1} and \eqref{eq:diff2} 
in \eqref{eq:diff0}, it is now easy to complete
the proof of Theorem~\ref{thm:deux}. \qed

\begin{rem}
Using Proposition \ref{prop:1besov}, the results in Theorem \ref{thm:un} and Theorem \ref{thm:deux}  can be extended to the spaces $B^{s+1}_{2,r}\times B^s_{2,r}$ instead of $H^{s+1}\times H^s$   whenever $B^s_{2,r}$ embeds continuously in 
$W^{1,\infty}.$ The Besov spaces framework
 allows to get a result for the critical regularity $s=1+N/2$ since 
 $B^{1+N/2}_{2,1}\hookrightarrow W^{1,\infty}$ (whereas $H^{1+N/2}\not\hookrightarrow
 W^{1,\infty}$). 
   \end{rem}

\section{Proof of Theorems \ref{thm:trois} and  \ref{thm:quatre}}\label{prop:5}

As mentioned in the introduction, our proofs will be based on the dispersive
properties of the linearized 
 system \eqref{eq:dyna} about $(0,0),$ namely
 \begin{equation}\label{eq:linearized}
\left\{
\begin{array}{l}
\displaystyle
\partial_t b + \frac{\sqrt{2}}{\eps}{\rm div}v = f,
\\
\displaystyle
\partial_t v + \frac{\sqrt{2}}{\eps} \nabla b-\sqrt2\eps\nabla\Delta b = g.
\end{array}
\right.
\end{equation}
More precisely, we shall use the following 
result, the proof of which is presented in the Appendix.
\begin{prop}\label{p:dispersive}
Let $(b,v)$ solve system $\eqref{eq:linearized}$
on $[0,T]\times\R^N.$
There exists a smooth compactly supported function $\chi$
with  value $1$ near the origin such that for all $\eps>0,$
if we denote  
\begin{equation}\label{eq:def}
b_\ell:=\chi(\eps D)b,\quad
b_h:=(1-\chi(\eps D)) b,\quad
v_\ell:=\chi(\eps D)v\quad\!\!\hbox{and}\!\!\quad
v_h:=(1-\chi(\eps D)) v
\end{equation}
then the  following a priori estimates hold true for some constant
$C$ depending only on $N$:
\begin{itemize}
\item if $N\geq4$ then
$\|(b,v)_{\ell}\|_{L^2_T(C^{0,1})}\leq C\eps^{\frac12}
\Bigl(\|(b_0,v_0)_{\ell}\|_{B^{\frac N2+\frac12}_{2,1}}+
\|(f,g)_{\ell}\|_{L^1_T(B^{\frac N2+\frac12}_{2,1})}\Bigr)$
\item if $N=3$ then for all $p>2,$
$\|(b,v)_{\ell}\|_{L^p_T(C^{0,1})}\! \leq\! C\eps^{\frac1p}
\Bigl(\|(b_0,v_0)_{\ell}\|_{B^{\frac 52\!-\!\frac1p}_{2,1}}\!+\!
\|(f,g)_{\ell}\|_{L^1_T(B^{\frac 52\!-\!\frac1p}_{2,1})}\Bigr)$
\item if $N=2$ then 
$\|(b,v)_{\ell}\|_{L^4_T(C^{0,1})}\leq C\eps^{\frac14}
\Bigl(\|(b_0,v_0)_{\ell}\|_{B^{\frac74}_{2,1}}+
\|(f,g)_{\ell}\|_{L^1_T(B^{\frac74}_{2,1})}\Bigr)$
\item if $N\geq3$ then 
$\|(\eps\nabla b,v)_{h}\|_{L^2_T(C^{0,1})}\leq C
\Bigl(\|(\eps\nabla b_0,v_0)_{h}\|_{B^{\frac N2}_{2,1}}+
\|(\eps\nabla f,g)_{h}\|_{L^1_T(B^{\frac N2}_{2,1})}\Bigr)$
\item if $N=2$ then for all $p>2,$
$$\|(\eps\nabla b,v)_h\|_{L^p_T(C^{0,1})}\leq C
\Bigl(\|(\eps\nabla b_0,v_0)_{h}\|_{B^{2-\frac2p}_{2,1}}+
\|(\eps\nabla f,g)_{h}\|_{L^1_T(B^{2-\frac2p}_{2,1})}\Bigr).$$
\end{itemize}\end{prop}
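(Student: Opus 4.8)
The plan is to establish the dispersive estimates of Proposition \ref{p:dispersive} by first diagonalizing system \eqref{eq:linearized} in Fourier space, then treating the low-frequency and high-frequency pieces separately via classical Strichartz estimates for the wave and Schr\"odinger equations respectively. I would begin by passing to the Fourier side and, as indicated in the excerpt, reducing to the potential case where $v$ is a gradient (the divergence-free part of $v$ is unaffected by the singular coupling and can be handled trivially, since it only sees the transport term). For potential solutions the symbol has eigenvalues $\lambda_\pm=\pm i\sqrt2|\xi|\sqrt{|\xi|^2+\eps^{-2}}$, and I would introduce the unknown $m_\pm = \hat b \pm \text{(something)}\,|\xi|^{-1}\xi\cdot\hat v$, suitably normalized so that $m_\pm$ solves the half-wave type equation $\partial_t m_\pm - \lambda_\pm m_\pm = \hat f \pm (\text{multiplier})\,\hat g$, with the group generated by $\lambda_\pm$ being exactly $e^{\pm it\sqrt2|\xi|\sqrt{|\xi|^2+\eps^{-2}}}$.

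Next I would separate frequencies with the cut-off $\chi(\eps D)$. On the low-frequency range $|\xi|\lesssim\eps^{-1}$ one has $\sqrt{|\xi|^2+\eps^{-2}}\approx\eps^{-1}$, so the phase is a perturbation of the wave phase $\pm\sqrt2\,\eps^{-1}|\xi|$; I would rescale time by $\eps$ (equivalently use the scaling $x\mapsto x/\eps$ hidden in the parabolic scaling) to reduce to the standard wave equation $e^{\pm i\tau|\xi|}$ on the frequency-localized data, apply the classical Strichartz estimates for the wave equation in dimension $N$ (admissible pairs $(p,q)$ with $q=\infty$ forcing $p=2$ when $N\geq4$, $p>2$ when $N=3$, $p=4$ when $N=2$), and then keep careful track of how the rescaling produces the factors $\eps^{1/2}$, $\eps^{1/p}$, $\eps^{1/4}$ in front of the estimates and the precise Besov regularity indices $\frac N2+\frac12$, $\frac52-\frac1p$, $\frac74$; the extra half-derivative compared to the $L^2$-based $B^{N/2}_{2,1}\hookrightarrow C^{0,1}$ threshold is exactly the Strichartz gain. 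The crucial technical points here are: (i) justifying that the actual phase $\sqrt2|\xi|\sqrt{|\xi|^2+\eps^{-2}}$, not the pure wave phase, still obeys the stationary-phase dispersive bound $\|e^{itH}P_{\text{low}}\|_{L^1\to L^\infty}\lesssim \eps^{?}|t|^{-(N-1)/2}$ — this requires a non-degeneracy/curvature check on the Hessian of the symbol on dyadic shells, uniformly in $\eps$ after rescaling; (ii) summing the dyadic Littlewood--Paley pieces with the $\ell^1$ (Besov $B^s_{2,1}$) summation, which is what makes the endpoint $C^{0,1}=C^{0,1}$ Lipschitz control legitimate.

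On the high-frequency range $|\xi|\gtrsim\eps^{-1}$ one has $\sqrt{|\xi|^2+\eps^{-2}}\approx|\xi|$, so the phase is $\pm\sqrt2\,\eps|\xi|^2$ up to lower-order terms — a Schr\"odinger phase with small diffusion $\sqrt2\,\eps$. I would again rescale to reduce to the free Schr\"odinger group $e^{i\tau\Delta}$, invoke the Strichartz estimates for Schr\"odinger (admissible endpoint $(p,q)$ with $q=\infty$: $p=2$ for $N\geq3$, $p>2$ for $N=2$), and translate back. Here the natural unknown is $\eps\nabla b$ together with $v$ (rather than $b$ alone) because the symmetrized variable that diagonalizes the system at high frequency weights $\hat b$ by a factor $\sim\eps|\xi|$; this explains the appearance of $(\eps\nabla b,v)_h$ and $(\eps\nabla f,g)_h$ in the statement, and the regularity index drops to $\frac N2$ (resp. $2-\frac2p$) since the Schr\"odinger Strichartz gain here is $1$ full derivative ($\frac12$ on each of the two Strichartz slots for $q=\infty$), matching $B^{N/2}_{2,1}\hookrightarrow C^{0,1}$. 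I expect the main obstacle to be the uniform-in-$\eps$ dispersive estimate for the true phase $\xi\mapsto|\xi|\sqrt{|\xi|^2+\eps^{-2}}$: one must verify that after the frequency localization the phase has non-vanishing Gaussian curvature on the relevant hypersurfaces with bounds independent of $\eps$ (in the low regime, uniformly as the surface degenerates from a sphere towards the wave cone; in the high regime, uniformly as it becomes a paraboloid), so that the $TT^*$ argument of Ginibre--Velo applies with $\eps$-explicit constants. Once the two regimes are in hand, Proposition \ref{p:dispersive} follows by combining the wave-type bound on $(b,v)_\ell$ and the Schr\"odinger-type bound on $(\eps\nabla b,v)_h$, adding the trivial $L^\infty_T L^2$ energy estimate for the divergence-free part of $v$, and summing over Littlewood--Paley blocks in $\ell^1$.
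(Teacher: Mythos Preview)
Your outline is essentially the same strategy as the paper's proof: symmetrize/diagonalize the system to reduce to the scalar group $e^{it\eps^{-1}\sqrt2|\xi|\sqrt{1+\eps^2|\xi|^2}}$, prove a uniform-in-$\eps$ dispersive estimate for this phase on dyadic shells (wave-type decay $t^{-(N-1)/2}$ when $\eps|\xi|\lesssim1$, Schr\"odinger-type decay $t^{-N/2}$ when $\eps|\xi|\gtrsim1$) via stationary phase, deduce Strichartz estimates by Keel--Tao/$TT^*$, and sum the Littlewood--Paley pieces in $\ell^1$ to land in $B^s_{2,1}$. The paper implements the diagonalization by introducing $c=(1-\eps^2\Delta)^{1/2}b$ and $d=(-\Delta)^{-1/2}\div v$, and handles the $\eps$-dependence of constants by rescaling each dyadic block $(\Delta_q c,\Delta_q d)(t,x)\mapsto(\Delta_q c,\Delta_q d)(2^{-2q}t,2^{-q}x)$, which reduces everything to a fixed annulus with the effective parameter $2^q\eps$; this is exactly the mechanism behind your ``rescale time by $\eps$'' step, and the curvature check you flag as the main obstacle is precisely the content of the paper's Lemma~\ref{l:dispersive}.

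One small correction: the divergence-free part of $v$ is \emph{not} handled ``trivially'' in the sense you suggest. In system~\eqref{eq:linearized} there is no transport term; the Leray projection gives $\partial_t Pv=Pg$, so $Pv(t)=Pv_0+\int_0^tPg$, and bounding this in $L^p_T(C^{0,1})$ would produce a factor $T^{1/p}$ absent from the stated estimates. The paper simply restricts to potential $v$ (as is the case in all the applications, since $v=2\nabla\varphi$), and you should do the same rather than claim a separate estimate for $Pv$.
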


Throughout the proof of Theorem \ref{thm:trois}, we shall use freely the following 
inequalities which are proved in the Appendix:
\begin{lem} With the notation used in Proposition \ref{p:dispersive},   there exists a constant $C>0$ depending only on $N$ and $\sigma>0$ such that, under
condition \eqref{eq:novortex1},
\label{l:rouge}
\begin{eqnarray}\label{eq:equiv1}
&\!\!\!C^{-1}\|(b,z)\|_{B^\sigma_{2,r}}\leq \|(b,v)_\ell\|_{B^\sigma_{2,r}}
\!+\!\|(\eps\nabla b,v)_h\|_{B^\sigma_{2,r}}\leq C\|(b,z)\|_{B^\sigma_{2,r}}
\quad\!\!\hbox{for } \ r\in[1,\infty],\quad\\[2ex]
\label{eq:equiv2}
&C^{-1}\|(b,z)\|_{C^{0,1}}\leq \|(b,v)_\ell\|_{C^{0,1}}
+\|(\eps\nabla b,v)_h\|_{C^{0,1}}\leq C\|(b,z)\|_{C^{0,1}}.
\end{eqnarray}
\end{lem}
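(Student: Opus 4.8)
The plan is to reduce everything to the algebraic identity \eqref{eq:pota}, which (recalling \eqref{eq:sixbis}) states that $w:=\Im z=-\nabla\log\rho^2$ with $\rho^2:=1+\frac{\eps}{\sqrt2}b$. Under \eqref{eq:novortex1} one has $\rho^2\in[\frac14,4]$ and $\eps\|b\|_{L^\infty}\leq C$, so that both $t\mapsto\log(1+t/\sqrt2)$ and its inverse $t\mapsto\sqrt2(e^t-1)$ are smooth on fixed neighbourhoods of the ranges of $\eps b$ and of $\log\rho^2$, and vanish at the origin. Hence, by the composition and product estimates of Proposition~\ref{p:compo}, for every $\theta>0$ and $r\in[1,\infty]$,
$$
\|\log\rho^2\|_{B^\theta_{2,r}}\leq C\eps\|b\|_{B^\theta_{2,r}}\quad\hbox{and, conversely,}\quad \eps\|b\|_{B^\theta_{2,r}}\leq C\|\log\rho^2\|_{B^\theta_{2,r}},
$$
the constants depending only on $N$, $\theta$ and the bound in \eqref{eq:novortex1}. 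Combining this with the standard equivalence $\|u\|_{B^{\sigma+1}_{2,r}}\approx\|u\|_{B^\sigma_{2,r}}+\|\nabla u\|_{B^\sigma_{2,r}}$ and with $w=-\nabla\log\rho^2$, I obtain
$$
\eps\|b\|_{B^{\sigma+1}_{2,r}}\approx\|\log\rho^2\|_{B^{\sigma+1}_{2,r}}\approx\|\log\rho^2\|_{B^\sigma_{2,r}}+\|w\|_{B^\sigma_{2,r}}\approx\eps\|b\|_{B^\sigma_{2,r}}+\|w\|_{B^\sigma_{2,r}}.
$$
In words: up to the bounded multiplier $\rho^{\pm2}$, the field $\eps\nabla b=\nabla(\eps b)$ is interchangeable with $w$, and one extra derivative of $\eps b$ with $w$. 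This is precisely why the imaginary part of the augmented unknown $z=\nabla(2\varphi-i\log\rho^2)$ gains a derivative on $b$, and it is the only step where the specific structure of the equation enters.

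It then remains to compare $\|(b,z)\|_{B^\sigma_{2,r}}\approx\|b\|_{B^\sigma_{2,r}}+\|v\|_{B^\sigma_{2,r}}+\|w\|_{B^\sigma_{2,r}}$ with $\|(b,v)_\ell\|_{B^\sigma_{2,r}}+\|(\eps\nabla b,v)_h\|_{B^\sigma_{2,r}}$, which is pure Littlewood--Paley bookkeeping around the threshold $|\xi|\sim\eps^{-1}$ fixed by $\chi(\eps D)$ in \eqref{eq:def}. For $v$ it is immediate from $v=v_\ell+v_h$. For $b$ I would invoke two Bernstein inequalities: since $b_h$ is spectrally supported away from the ball of radius $\sim\eps^{-1}$, $\|b_h\|_{B^\sigma_{2,r}}\leq C\eps\|\nabla b_h\|_{B^\sigma_{2,r}}=C\|(\eps\nabla b)_h\|_{B^\sigma_{2,r}}$, so $b_h$ is absorbed by the high-frequency group; and since $b_\ell$ is spectrally supported in a ball of radius $\sim\eps^{-1}$, $\eps\|b_\ell\|_{B^{\sigma+1}_{2,r}}\leq C\|b_\ell\|_{B^\sigma_{2,r}}$, which together with $\eps\|b_h\|_{B^{\sigma+1}_{2,r}}\approx\|(\eps\nabla b)_h\|_{B^\sigma_{2,r}}$ (valid for $\eps\leq1$) gives $\eps\|b\|_{B^{\sigma+1}_{2,r}}\leq C(\|b_\ell\|_{B^\sigma_{2,r}}+\|(\eps\nabla b)_h\|_{B^\sigma_{2,r}})$. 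Feeding this last bound into the displayed chain controls $\|w\|_{B^\sigma_{2,r}}$ by the middle quantity; conversely $\|(\eps\nabla b)_h\|_{B^\sigma_{2,r}}\leq\eps\|b\|_{B^{\sigma+1}_{2,r}}\leq C(\|b\|_{B^\sigma_{2,r}}+\|w\|_{B^\sigma_{2,r}})$ controls the middle quantity by $\|(b,z)\|_{B^\sigma_{2,r}}$. Collecting the pieces yields \eqref{eq:equiv1}, with no smallness required beyond $\eps\leq1$.

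For \eqref{eq:equiv2} the same scheme applies verbatim with $B^\sigma_{2,r}$ replaced by $C^{0,1}=W^{1,\infty}$: the composition/product step reduces to the chain rule and the Leibniz formula ($\rho^{\pm2}$ being bounded in $W^{1,\infty}$ once $\|\nabla b\|_{L^\infty}$ is finite), and the Bernstein inequalities are used in their $L^\infty$ form. The one extra point is that relating $\eps\nabla^2 b$ to $\nabla w=-\nabla^2\log\rho^2$ generates a quadratic contribution $\|w\|_{L^\infty}^2$, which is harmless because $\|(b,z)\|_{C^{0,1}}$ is bounded (indeed $O(1)$) in every situation where the lemma is applied, by Theorem~\ref{thm:un}. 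The main --- essentially the only --- difficulty is the first paragraph: one must make sure that the $\eps$-dependent change of unknown $b\leftrightarrow\log\rho^2$ costs only a constant governed by the no-vortex bound, and not by $\eps$ or by the size of $b$ in a strong norm. The device for this is to write $\log(1+\frac{\eps}{\sqrt2}b)$ and its inverse as smooth functions of $\eps b$ vanishing at $0$, so that composition estimates apply with $\eps b$ (not $b$) as argument and the gain of a power of $\eps$ comes out for free. Everything else is routine Littlewood--Paley manipulation.
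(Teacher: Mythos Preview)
Your proposal is correct and follows essentially the same approach as the paper: the identity $\Im z=-\nabla\log(1+\eps b/\sqrt2)$ combined with the composition estimates of Proposition~\ref{p:compo} applied to $\eps b$, together with Bernstein inequalities at the frequency threshold $|\xi|\sim\eps^{-1}$. The only technical detail the paper spells out that you gloss over is, for \eqref{eq:equiv2}, the convolution-kernel representation $(\eps\nabla b)_\ell=\eps^{-N}k(\eps^{-1}\cdot)\star b_\ell$ with $k\in L^1$, which is what justifies the $L^\infty$ Bernstein step since $\chi(\eps D)$ is not a dyadic block; conversely, your explicit mention of the quadratic contribution $\|w\|_{L^\infty}^2$ is a point the paper passes over silently.
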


\smallskip

\subsection{Proof of Theorem \ref{thm:trois} in the case 
\mathversion{bold}$N\geq 4$\mathversion{normal}}

According to Proposition \ref{p:dispersive}, the linear system
\eqref{eq:linearized} possesses better dispersive
properties in high dimension $N\geq4.$ 
Therefore, we shall first prove Theorem \ref{thm:trois}
in this case. 
\smallbreak
Assume that we are given some map
$\Psi$ solution of $(GP)$ with 
datum $\Psi^0,$  satisfying $(b,z)\in{\mathcal C}^1([0,T];H^s)$ and 
\begin{equation}\label{eq:strichartz0}
\frac12\leq\rho\leq 2\quad\hbox{on}\quad[0,T].
\end{equation}
Integrating  the inequality in Proposition \ref{prop:1besov} in the case $r=2$ 
and taking  inequality \eqref{eq:strichartz0} into account
 yields  for all $t\in[0,T],$ 
$$
\|(b,z)(t)\|_{H^s}
\leq 2\|(b_0,z_0)\|_{H^s}+C\int_0^t\|(Db,Dz)\|_{L^\infty}
\|(b,z)\|_{H^s}\,d\tau,
$$
whence, according to Cauchy-Schwarz inequality and to inequality \eqref{eq:equiv2},
\begin{equation}\label{eq:strichartz1}
\!\!\!\|(b,z)\|_{L^\infty_t(H^s)}
\!\leq\!2\|(b_0,z_0)\|_{H^s}\!+\!Ct^{\frac12}\bigl(\|(b,v)_{\!\ell}\|_{L_t^2(C^{0,1})}
\!+\!\|(\eps\nabla b,v)_{\!h}\|_{L_t^2(C^{0,1})})
\|(b,z)\|_{L^\infty_t(H^s)}.\!
\end{equation}
In order to bound  $(b,v)_\ell$ and $(\eps\nabla b,v)_h$ in $L^2([0,T];C^{0,1}),$
we shall  take advantage of Proposition 
\ref{p:dispersive}. As $N\geq4$ and  
 $(b,v)$ satisfies system \eqref{eq:linearized} with source 
terms\footnote{To get the formula for $g_2,$ it suffices to use
\eqref{eq:dyna} and the identities \eqref{eq:rho}, \eqref{eq:pota}
which imply that $$
\frac{\Delta\rho}\rho=-\div\Im z\quad\hbox{and}\quad
\frac{\eps}{\sqrt2}\,\Delta b=-\frac\eps{\sqrt2}\,\div(b\,\Im z)-\div\Im z.
$$}
 $$ f:=-\div(bv)\quad\hbox{and}\quad g:=g_1+g_2\ \hbox{ with }\ g_1:=-\nabla
|v|^2\ \hbox{ and }\ g_2:=\sqrt2\,\eps\nabla\div(b\,\Im z),$$
 we get for all $t\in[0,T],$
\begin{eqnarray}\label{eq:strichartz2a}
\|(b,v)_\ell\|_{L_t^2(C^{0,1})}
\leq C\eps^{\frac12}
\bigl(\|(b_0,v_0)_\ell\|_{B^{\frac N2+\frac12}_{2,1}}
+\|(f,g)_\ell\|_{L_t^1(B^{\frac N2+\frac12}_{2,1})}\bigr),\\
\label{eq:strichartz2b}
\|(\eps Db,v)_h\|_{L_t^2(C^{0,1})}
\leq C\bigl(\|(\eps Db_0,v_0)_h\|_{B^{\frac N2}_{2,1}}
+\|(\eps Df,g)_h\|_{L_t^1(B^{\frac N2}_{2,1})}\bigr).
\end{eqnarray}
We claim that 
\begin{equation}\label{eq:strichartz2c}
\eps^{\frac12}\|(b_0,v_0)_\ell\|_{B^{\frac N2+\frac12}_{2,1}}
+\|(\eps Db_0,v_0)_h\|_{B^{\frac N2}_{2,1}}
\leq C\eps^{\frac12}\|(b_0,z_0)\|_{B^{\frac N2+\frac12}_{2,1}}.
\end{equation}
In fact,
for  the high frequency part of the datum, one may exchange
 the factor $\eps^{\frac12}$ against half a derivative. 
This is due to the fact that for all $\sigma\in\R,$ $\alpha>0$ and $\phi\in
B^{s+\alpha}_{2,1},$ we have \begin{equation}\label{eq:strichartzHF}
\|\phi_h\|_{B^s_{2,1}}\leq C\eps^{\alpha}
\|\phi_h\|_{B^{s+\alpha}_{2,1}}.
\end{equation}
 Indeed, owing to the support properties
of function $\chi,$ one may write for some suitable $\eps_0>0,$
 $$
\|\phi_h\|_{B^s_{2,1}}=\sum_{2^q\eps\geq\eps_0}
2^{qs}\|\Delta_q\phi_h\|_{L^2} \leq
\Bigl(\frac\eps{\eps_0}\Bigr)^{\alpha}\!\sum_{2^q\eps\geq\eps_0}
2^{q(s+\alpha)}\|\Delta_q\phi_h\|_{L^2}\leq\Bigl(\frac\eps{\eps_0}\Bigr)^{\alpha}
\|\phi_h\|_{B^{s+\alpha}_{2,1}}.
$$
Therefore 
$$
\eps^{\frac12}\|(b_0,v_0)_\ell\|_{B^{\frac N2+\frac12}_{2,1}}
+\|(\eps Db_0,v_0)_h\|_{B^{\frac N2}_{2,1}}
\leq C\eps^{\frac12}\bigl(\|(b_0,v_0)_\ell\|_{B^{\frac N2+\frac12}_{2,1}}
+\|(\eps Db_0,v_0)_h\|_{B^{\frac N2+\frac12}_{2,1}}\bigr)
$$
and applying inequality \eqref{eq:equiv1}
gives \eqref{eq:strichartz2c}.
\smallbreak
It follows from the previous discussion  that  the problem reduces to finding  suitable bounds 
for $(f,g)_\ell$ in $L^1([0,T];B^{\frac N2+\frac12}_{2,1}),$
and for $(\eps\nabla f,g)_h$ in $L^1([0,T];B^{\frac N2}_{2,1}).$
For that purpose, we  use standard tame estimates for the product
of functions in Besov spaces  which are stated in  Proposition \ref{p:besov}.
This yields
$$
\|f_\ell\|_{B^{\frac N2+\frac12}_{2,1}}
\leq C\|bv\|_{B^{\frac N2+\frac32}_{2,1}}
\leq C\bigl(\|b\|_{L^\infty}\|v\|_{B^{\frac N2+\frac32}_{2,1}}
+\|v\|_{L^\infty}\|b\|_{B^{\frac N2+\frac32}_{2,1}}\bigr),
$$
$$
\|(g_1)_\ell\|_{B^{\frac N2+\frac12}_{2,1}}
\leq C\||v|^2\|_{B^{\frac N2+\frac32}_{2,1}}
\leq C\|v\|_{L^\infty}\|v\|_{B^{\frac N2+\frac32}_{2,1}}.
$$
To deal with the term $(g_2)_\ell,$ 
we notice that for all $\sigma\in\R$ and $\phi\in B^{\sigma}_{2,1},$ we have
\begin{equation}\label{eq:strichartzLF}
\|\eps\nabla\phi_\ell\|_{B^\sigma_{2,1}}\leq
C\|\phi_\ell\|_{B^{\sigma}_{2,1}}. \end{equation}
Indeed, owing to the support properties of $\Supp\hat\phi_\ell$
and Parseval formula, we have for some $\eps_1>\eps_0,$
   $$
\|\eps\nabla \phi_\ell\|_{B^\sigma_{2,1}}=\sum_{2^q\eps\leq\eps_1}
2^{q\sigma}\eps\|\nabla\Delta_q\phi_\ell\|_{L^2} \leq
\sum_{2^q\eps\leq\eps_1}
(\eps 2^q)
2^{q\sigma}\|\Delta_q\phi_\ell\|_{L^2}\leq\eps_1
\|\phi_\ell\|_{B^{\sigma}_{2,1}}. $$ 
Therefore,
$$ \begin{array}{lll}
\|(g_2)_\ell\|_{B^{\frac N2+\frac12}_{2,1}}
&\leq& C\|b\,\Im z\|_{B^{\frac N2+\frac32}_{2,1}},\\
&\leq& C\bigl(\|b\|_{L^\infty}\|\Im z\|_{B^{\frac N2+\frac32}_{2,1}}
+\|\Im z\|_{L^\infty}\|b\|_{B^{\frac N2+\frac32}_{2,1}}\bigr).\end{array}
$$
Summing the    inequalities above, we end up with
\begin{equation}\label{eq:strichartz3}
\|(f,g)_\ell\|_{B^{\frac N2+\frac12}_{2,1}}\leq C\|(b,z)\|_{L^\infty}
\|(b,z)\|_{B^{\frac N2+\frac32}_{2,1}}.
\end{equation}
To deal with the high-frequency terms,  we  use Proposition 
\ref{p:besov} once more. We obtain
\begin{eqnarray}\label{eq:strichartz3a}
&\|\eps\nabla f_h\|_{B^{\frac N2}_{2,1}}
\leq C\eps\|bv\|_{B^{\frac N2+2}_{2,1}}
\leq C\eps\bigl(\|b\|_{L^\infty}\|v\|_{B^{\frac N2+2}_{2,1}}
+\|v\|_{L^\infty}\|b\|_{B^{\frac N2+2}_{2,1}}\bigr),\\\label{eq:strichartz3b}
&\|(g_2)_h\|_{B^{\frac N2}_{2,1}}
\leq C\eps\|b\,\Im z\|_{B^{\frac N2+2}_{2,1}}
\leq C\eps\bigl(\|b\|_{L^\infty}\|\Im z\|_{B^{\frac N2+2}_{2,1}}
+\|\Im z\|_{L^\infty}\|b\|_{B^{\frac N2+2}_{2,1}}\bigr).
\end{eqnarray}
A direct  estimate of $(g_1)_h$ would give a term  of order $1.$
To get the factor $\eps,$ one
may first take advantage of  inequality \eqref{eq:strichartzHF}
so as to get, 
$$\begin{array}{lll}
\|(g_1)_h\|_{B^{\frac N2}_{2,1}}
&\leq& C\eps\|g_1\|_{B^{\frac N2+1}_{2,1}},\\
&\leq& C\eps\||v|^2\|_{B^{\frac N2+2}_{2,1}},\\
&\leq&  C\eps
\|v\|_{L^\infty}\|v\|_{B^{\frac N2+2}_{2,1}}.
\end{array}
$$
The above  inequality together with \eqref{eq:strichartz3a} and
\eqref{eq:strichartz3b} implies that  \begin{equation}\label{eq:strichartz4}
\|(\eps\nabla f,g)_h\|_{B^{\frac N2+\frac12}_{2,1}}
\leq C\eps\|(b,z)\|_{L^\infty}
\|(b,z)\|_{B^{\frac N2+2}_{2,1}}.
\end{equation}
Finally, inserting inequalities \eqref{eq:strichartz2c},
\eqref{eq:strichartz3},  \eqref{eq:strichartz4} into  \eqref{eq:strichartz2a},
\eqref{eq:strichartz2b}, we  end up with
$$\displaylines{
\|(b,v)_\ell\|_{L^2_t(C^{0,1})}+\|(\eps\nabla b,v)_h\|_{L^2_t(C^{0,1})}
\leq C\eps^{\frac12}\Bigl(\|(b_0,z_0)\|_{B^{\frac N2+\frac12}_{2,1}}
\hfill\cr\hfill+t^{\frac12}\|(b,z)\|_{L_t^2(L^\infty)}
\|(b,z)\|_{L_t^\infty(B^{\frac N2+2}_{2,1})}\Bigr).}
$$
Since $s>N/2+2,$ we have  $H^s\hookrightarrow B^{\frac
N2+\frac12}_{2,1}\hookrightarrow B^{\frac
N2+2}_{2,1}$ so that 
$$
\|(b,v)_\ell\|_{L^2_t(C^{0,1})}+\|(\eps\nabla b,v)_h\|_{L^2_t(C^{0,1})}
\leq C\eps^{\frac12}\Bigl(\|(b_0,z_0)\|_{H^s}
+t^{\frac12}\|(b,z)\|_{L_t^2(L^\infty)}
\|(b,z)\|_{L_t^\infty(H^s)}\Bigr).
$$
Let $X_0:=\|(b_0,z_0)\|_{H^s}$ and $X(t):= \|(b,z)\|_{L^\infty_t(H^s)}
+c\eps^{-\frac12}\|(b,z)\|_{L^2_t(C^{0,1})}$  where $c=c(S, N)$ is some constant which is assumed to be  sufficiently small. We deduce from the previous inequality,
 \eqref{eq:strichartz1} and Lemma \ref{l:rouge},
 that,  changing possibly the  constant $C,$
\begin{equation}\label{eq:boot}
X(t)\leq 3X_0+C\sqrt{\eps t}\, X^2(t).
\end{equation}
Therefore, using a stopping time  argument
similar to  the  one used in  the proof of Theorem \ref{thm:un}, 
we  conclude that $X(t)\leq 4X(0)$ for all $t\in[0,T]$ 
whenever $T$ satisfies
$$
16CX_0\sqrt{\eps T}\leq 1.
$$
We finally  complete the proof of  Theorem \ref{thm:trois} in the case $N\geq 4$ as the end of the proof of Theorem \ref{thm:un}. The
details are left to the reader.
\qed
\begin{rem}
The above proof
may be easily adapted to the Besov spaces framework, and in 
particular to the case where the data $(a_0^\eps,u_0^\eps)$ are in 
$B^{\frac N2+3}_{2,1}\times
B^{\frac N2+2}_{2,1}$ and satisfy $$C\eps\|(a_\eps^0,u_\eps^0)\|_{B^{\frac N2+3}_{2,1}\times
B^{\frac N2+2}_{2,1}}\leq 1.$$  As an easy consequence, 
we discover that  under the conditions of Theorem $\ref{thm:trois}$
 there exists a constant $c$ independent of $s$
 such that $|\Psi|$ remains bounded away from zero
 up to time  
 $$\frac c{\eps^2\|(a_\eps^0,u_\eps^0)\|_{B^{\frac N2+3}_{2,1}\times B^{\frac N2+2}_{2,1}}^2}\cdotp$$ 
 \end{rem}

 
\subsection{Proof of Theorem \ref{thm:trois} in the case 
\mathversion{bold}$N=3$\mathversion{normal}}

 The proof   Theorem \ref{thm:trois} in the three-dimensional case relies on very  similar arguments:   however, 
the endpoint inequality pertaining to $p=2$ 
in Proposition \ref{p:dispersive} does not hold for $N=3$ 
and has to be replaced by slightly more technical arguments. 

As above, we assume that we are given some suitably smooth map
$\Psi,$ solution of $(GP)$ with 
datum $\Psi^0$ and satisfying \eqref{eq:strichartz0}. 
Fix some $\alpha\in(0,1),$ and set 
 $p:=1+1/\alpha$ and $p'=\alpha+1.$
Arguing as  for   the proof  \eqref{eq:strichartz1}, we obtain
\begin{eqnarray}\label{eq:N=3:1}
&&\|(b,z)\|_{L^\infty_t(H^s)}
\leq2\|(b_0,z_0)\|_{H^s}\nonumber\\
&&\hspace{3cm}+C\Bigl(t^{\frac1{p'}}\|(b,v)_{\ell}\|_{L_t^p(C^{0,1})}
+t^{\frac12}\|(\eps\nabla b,v)_{h}\|_{L_t^2(C^{0,1})}\Bigr)
\|(b,z)\|_{L^\infty_t(H^s)}.\qquad
\end{eqnarray}
It remains to find appropriate   bounds for $(b,v)_{\ell}$ in $L^p([0,T];C^{0,1})$
and $(\eps\nabla b,v)_{h}$ in $L^2([0,T];C^{0,1}).$
For the low-frequency part of the solution, 
 Proposition \ref{p:dispersive} ensures that
\begin{equation}\label{eq:N=3:2}
\|(b,v)_\ell\|_{L_t^p(C^{0,1})}
\leq C\eps^{\frac1p}
\bigl(\|(b_0,v_0)_\ell\|_{B^{\frac52-\frac1p}_{2,1}}
+\|(f,g)_\ell\|_{L_t^1(B^{\frac52-\frac1p}_{2,1})}\bigr).
\end{equation}
As in the case $N\geq4,$ 
the source terms $f_\ell$ and $g_\ell$ may be easily bounded  thanks to
Proposition \ref{p:besov}. We end up with 
\begin{equation}\label{eq:N=3:3}
\|(f,g)_\ell\|_{B^{\frac52-\frac1p}_{2,1}}\leq C\|(b,z)\|_{L^\infty}
\|(b,z)\|_{B^{\frac72-\frac1p}_{2,1}}.
\end{equation}
To deal with the high-frequency terms, we notice that, by virtue of Proposition
\ref{p:dispersive}, we have
\begin{equation}\label{eq:N=3:4}
\|(\eps Db,v)_h\|_{L_t^2(C^{0,1})}
\leq C\bigl(\|(\eps Db_0,v_0)_h\|_{B^{\frac32}_{2,1}}
+\|(\eps Df,g)_h\|_{L_t^1(B^{\frac32}_{2,1})}\bigr).
\end{equation}
Taking advantage of Proposition \ref{p:besov} and inequality
\eqref{eq:strichartzHF},   we get $$
\begin{array}{lll}
\|\eps\nabla f_h\|_{B^{\frac32}_{2,1}}
&\leq& C\eps\bigl(\|b\|_{L^\infty}\|v\|_{B^{\frac72}_{2,1}}
+\|v\|_{L^\infty}\|b\|_{B^{\frac72}_{2,1}}\bigr),\\[2ex]
\|(g_1)_h\|_{B^{\frac32}_{2,1}}&\leq& C\eps\|(g_1)_h\|_{B^{\frac52}_{2,1}}
\leq C\eps\|v\|_{L^\infty}\|v\|_{B^{\frac72}_{2,1}},\\[2ex]
\|(g_2)_h\|_{B^{\frac32}_{2,1}}
&\leq&C\eps\bigl(\|b\|_{L^\infty}\|\Im z\|_{B^{\frac72}_{2,1}}
+\|\Im z\|_{L^\infty}\|b\|_{B^{\frac72}_{2,1}}\bigr).
\end{array}
$$
 Following  the lines of the computations leading to 
inequality \eqref{eq:strichartz2c}, it is not difficult to show that
$$
\|(\eps Db_0,v_0)_h\|_{B^{\frac32}_{2,1}}
\leq C\eps\|(b_0,z_0)\|_{B^{\frac52}_{2,1}}.
$$
It follows that, if $s>7/2$ then inequalities \eqref{eq:N=3:2} to \eqref{eq:N=3:4}
yield $$\displaylines{
\eps^{-\frac1p}\|(b,v)_\ell\|_{L_t^p(C^{0,1})}
+\eps^{-1}\|(\eps Db,v)_h\|_{L_t^2(C^{0,1})}
\leq C\Bigl(\|(b_0,v_0)\|_{H^s}\hfill\cr\hfill+\Bigl(t^{\frac1{p'}}
\|(b,v)_{\ell}\|_{L_t^p(C^{0,1})} +t^{\frac12}\|(\eps\nabla
b,v)_{h}\|_{L_t^2(C^{0,1})}\Bigr)\|(b,z)\|_{L^\infty_t(H^s)}\Bigr).}
$$
We introduce, for a  constant $c$ which is assumed to be arbitrarily small, the quantity 
$$
X(t):=\|(b,z)\|_{L^\infty_t(H^s)}+c\eps^{-\frac1p}\|(b,v)_\ell\|_{L_t^p(C^{0,1})}
+c\eps^{-1}\|(\eps Db,v)_h\|_{L_t^2(C^{0,1})}.
$$
We obtain, in view of the prevous estimates
$$
X(t)\leq 3X_0+C\bigl(\eps^{\frac1p}t^{\frac1{p'}}+\eps\sqrt t\bigr)X^2.
$$
Using a standard bootstrap argument, 
one can conclude that $X(t)\leq 4X(0)$ for all $t\in[0,T]$ 
whenever $T$ satisfies
$$
16C\bigl(\eps^{\frac1p}t^{\frac1{p'}}+\eps\sqrt t\bigr)X_0\leq 1.
$$
It is  then straightforward to  complete the proof of the theorem.
\qed
\begin{rem}
As a by-product of the above proof, we see that if
 the data $(a_0^\eps,u_0^\eps)$ are in 
$B^{\frac92}_{2,1}(\R^3)\times B^{\frac72}_{2,1}(\R^3)$ 
and satisfy $$C\eps\|(a_\eps^0,u_\eps^0)\|_{B^{\frac92}_{2,1}\times
B^{\frac72}_{2,1}}\leq 1$$ then for all $\alpha\in(0,1),$ 
 there exists a constant $c$ 
 such that $|\Psi|$ remains bounded away from zero
 up to time  
 $$\min\biggl(\frac c
 {\eps^{1+\alpha}\|(a_\eps^0,u_\eps^0)\|_{B^{\frac92}_{2,1}\times B^{\frac72}_{2,1}}^{1+\alpha}},
 \frac c
 {\eps^{3}\|(a_\eps^0,u_\eps^0)\|_{B^{\frac92}_{2,1}\times B^{\frac72}_{2,1}}^{2}}
\biggr) \cdotp$$ 
Let us also point out that, resorting
to the logarithmic Strichartz estimate 
(as in e.g. \cite{BCD}, Th. 8.27), 
one may replace the factor $\eps^{1+\alpha}$
by $\eps^2\sqrt{\log\eps^{-1}}.$
However we do not intend to  provide proofs  it  in  this paper. 
 \end{rem}


\subsection{Proof of Theorem \ref{thm:trois} in the two-dimensional case}

Arguing as in the proof of  \eqref{eq:strichartz1}, one may write
for all $p\geq2,$
\begin{eqnarray}\label{eq:N=2:1}
&&\|(b,z)\|_{L^\infty_t(H^s)}
\leq\frac32\|(b_0,z_0)\|_{H^s}\nonumber\\
&&\hspace{3cm}+C\Bigl(t^{\frac34}\|(b,v)_{\ell}\|_{L_t^4(C^{0,1})}
+t^{\frac1{p'}}\|(\eps\nabla b,v)_{h}\|_{L_t^p(C^{0,1})}\Bigr)
\|(b,z)\|_{L^\infty_t(H^s)}.\qquad
\end{eqnarray}
Therefore it remains to bound  $(b,v)_{\ell}$ in $L^4([0,T];C^{0,1})$
and $(\eps\nabla b,v)_{h}$ in $L^p([0,T];C^{0,1}).$
For the low-frequency part of the solution, 
 Proposition \ref{p:dispersive} ensures that
\begin{equation}\label{eq:N=2:2}
\|(b,v)_\ell\|_{L_t^4(C^{0,1})}
\leq C\eps^{\frac14}
\bigl(\|(b_0,v_0)_\ell\|_{B^{\frac74}_{2,1}}
+\|(f,g)_\ell\|_{L_t^1(B^{\frac74}_{2,1})}\bigr).
\end{equation}
The source terms $f_\ell$ and $g_\ell$ may be easily bounded  thanks to
Proposition \ref{p:besov}: we get 
\begin{equation}\label{eq:N=2:3}
\|(f,g)_\ell\|_{B^{\frac74}_{2,1}}\leq C\|(b,z)\|_{L^\infty}
\|(b,z)\|_{B^{\frac74}_{2,1}}.
\end{equation}
Let us now focus on  the high-frequency part of the solution.
Applying Proposition \ref{p:dispersive}  yields
\begin{equation}\label{eq:N=2:4}
\|(\eps Db,v)_h\|_{L_t^p(C^{0,1})}
\leq C\bigl(\|(\eps Db_0,v_0)_h\|_{B^{2-\frac2p}_{2,1}}
+\|(\eps Df,g)_h\|_{L_t^1(B^{2-\frac2p}_{2,1})}\bigr).
\end{equation}
Taking advantage of Proposition \ref{p:besov} and inequality
\eqref{eq:strichartzHF}, we get $$
\begin{array}{lll}
\|\eps\nabla f_h\|_{B^{2-\frac2p}_{2,1}}
&\leq& C\eps\bigl(\|b\|_{L^\infty}\|v\|_{B^{4-\frac2p}_{2,1}}
+\|v\|_{L^\infty}\|b\|_{B^{4-\frac2p}_{2,1}}\bigr),\\[2ex]
\|(g_1)_h\|_{B^{2-\frac2p}_{2,1}}&\leq& C\eps\||v|^2\|_{B^{4-\frac2p}_{2,1}}
\leq C\eps\|v\|_{L^\infty}\|v\|_{B^{4-\frac2p}_{2,1}},\\[2ex]
\|(g_2)_h\|_{B^{2-\frac2p}_{2,1}}
&\leq&C\eps\bigl(\|b\|_{L^\infty}\|\Im z\|_{B^{4-\frac2p}_{2,1}}
+\|\Im z\|_{L^\infty}\|b\|_{B^{4-\frac2p}_{2,1}}\bigr).
\end{array}
$$
 Following the computations leading to 
inequality \eqref{eq:strichartz2c}, it is not difficult to show that
$$
\|(\eps Db_0,v_0)_h\|_{B^{2-\frac2p}_{2,1}}
\leq C\eps\|(b_0,z_0)\|_{B^{3-\frac2p}_{2,1}}.
$$
If we assume that  $s>4-\frac2p$ then   inequalities 
\eqref{eq:N=3:2} to \eqref{eq:N=3:4} imply that
 $$\displaylines{
\eps^{-\frac14}\|(b,v)_\ell\|_{L_t^4(C^{0,1})}
+\eps^{-1}\|(\eps Db,v)_h\|_{L_t^p(C^{0,1})}
\leq C\Bigl(\|(b_0,v_0)\|_{H^s}\hfill\cr\hfill+\Bigl(t^{\frac34}
\|(b,v)_{\ell}\|_{L_t^4(C^{0,1})}+t^{\frac1{p'}}\|(\eps\nabla
b,v)_{h}\|_{L_t^p(C^{0,1})}\Bigr)\|(b,z)\|_{L^\infty_t(H^s)}\Bigr).}
$$
We introduce as before, for a  constant $c$ which is assumed to be sufficiently  small, the quantity 
$$
X(t):=\|(b,z)\|_{L^\infty_t(H^s)}
+c\eps^{-\frac14}\|(b,v)_\ell\|_{L_t^4(C^{0,1})}
+c\eps^{-1}\|(\eps Db,v)_h\|_{L_t^{p'}(C^{0,1})},
$$
we get
$$
X(t)\leq3X_0+C\bigl(\eps^{\frac14}t^{\frac34}
+\eps t^{\frac1{p'}}\bigr)X^2.
$$
It is now easy  to complete the proof of the theorem.
\qed


\subsection{Proof of Theorem \ref{thm:quatre}}

With Theorem \ref{thm:trois} at our disposal, we  compare the solution 
$(a_\eps,u_\eps)$ to the hydrodynamical form \eqref{eq:dynaslow}
of the Gross-Pitaevskii equation, to the solution 
$(\mathfrak a_\eps, \mathfrak u_\eps)$ of
the linear system 
$L_\eps(\mathfrak a_\eps,\mathfrak u_\eps)=0$
with the same initial datum. 
We notice that 
$(\tilde b,\tilde v)(x,t):=
(a_\eps-\mathfrak a_\eps,u_\eps-\mathfrak u_\eps)(x,\frac t\eps)$
 satisfies 
$$\left\{
\begin{array}{l}
\displaystyle
\partial_t\tilde b + \frac{\sqrt{2}}{\eps}{\rm div}\,\tilde v = f,
\\
\displaystyle
\partial_t\tilde v + \frac{\sqrt{2}}{\eps} \nabla\tilde b-\sqrt2\eps\nabla\Delta\tilde b = g
\end{array}
\right.
$$
with null initial datum,
 $f:=-\div(bv)$ and $g:=-\nabla(|v|^2)-\sqrt2\eps\nabla\div(b\,\Im z).$
\smallbreak
\noindent
By standard energy method, it follows that 
$$
\begin{array}{lll}
\|(\tilde b,\eps\nabla \tilde b,\tilde v)_\ell(t)\|_{H^{s-1}}&\leq& 
\Int_0^t\|(f,\eps\nabla f,g)_\ell(\tau)\|_{H^{s-1}}\,d\tau,\\[1.5ex]
\|(\tilde b,\eps\nabla\tilde b,\tilde v)_h(t)\|_{H^{s-2}}&\leq& 
\Int_0^t\|(f,\eps\nabla f,g)_h(\tau)\|_{H^{s-2}}\,d\tau.\end{array}
$$
Parseval equality entails that 
\begin{equation}\label{eq:cor0}
\|(b,\eps\nabla b,v)_\ell\|_{H^{s-1}}\approx \|(b,v)_\ell\|_{H^{s-1}}
\quad\hbox{and}\quad
\|(b,\eps\nabla b,v)_h\|_{H^{s-2}}\approx \eps \|b_h\|_{H^{s-1}}+\|v_h\|_{H^{s-2}},
\end{equation}
and a similar property holds for $(f,g)$.
 We remark that,  thanks  to the low frequency cut-off, we have
$$
\|\eps\nabla\div(b\,\Im z)_\ell\|_{H^{s-1}}\leq C\|\div(b\,\Im z)\|_{H^{s-1}}.
$$
Therefore, using Lemma \ref{lem:A.1}, we get
\begin{equation}\label{eq:cor1}
\|(f,g)_\ell\|_{H^{s-1}}\leq C\|(b,z)\|_{L^\infty}\|(b,z)\|_{H^s}.
\end{equation}
In order to bound $\|(\eps\nabla f,g)_h\|_{H^{s-2}},$ we use the fact that
$$
\|(\nabla|v|^2)_h\|_{H^{s-2}}\leq C\eps\||v|^2\|_{H^s}
$$
so that we end up with the inequality
\begin{equation}\label{eq:cor2}
\|(\eps\nabla f,g)_h\|_{H^{s-2}}\leq C\eps\|(b,z)\|_{L^\infty}\|(b,z)\|_{H^s}.
\end{equation}
Combining inequalities \eqref{eq:cor1} and \eqref{eq:cor2} and making use of 
\eqref{eq:cor0}, we obtain that 
\begin{equation}\label{eq:cor3}
\|\tilde b(t)\|_{H^{s-1}}+\|\tilde v_\ell(t)\|_{H^{s-1}} 
+\eps^{-1}\|\tilde v_h(t)\|_{H^{s-2}} \leq C\int_0^t
\|(b,z)(\tau)\|_{L^\infty}\|(b,z)\|_{H^s}\,d\tau.
\end{equation}
If we assume that $N\geq4$ then, according to inequality \eqref{eq:boot}, 
we have for some constant $C$ depending only 
on $s$ and on $N,$
$$
\|(b,z)\|_{L^\infty_t(H^s)}
+\eps^{-\frac12}\|(b,z)\|_{L^2_t(C^{0,1})}
\leq C\|(b_0,z_0)\|_{H^s}\quad\hbox{for all}\quad
t\in[0,T_\eps].
$$
Inserting the above inequality in \eqref{eq:cor3}
directly implies Theorem \ref{thm:quatre}
in the case $N\geq4.$

The conclusion in the case $N=2,3$ follows from similar arguments. The
details are left to the reader.
\qed

\goodbreak

\appendix

\section{Tame estimates}
We recall  several Gagliardo-Nirenberg type inequalities,
the proof of which may be found in \cite{Fri}, provide the proof to Lemma \ref{l:rouge}, and finally present a commutation  result. 
\begin{lem}\label{Gagliardo}
Let $k\in\N$ and $j\in\{0,\cdots,k\}$. There exists a constant $C_{j,k}$
depending only on $j$ and $k$ and such that 
the following inequality holds true:
$$
\|D^jv\|_{L^\frac{2k}j}
\leq C_{j,k}\|v\|_{L^\infty}^{1-\frac jk}
\|D^k v\|_{L^2}^{\frac jk}.
$$
\end{lem}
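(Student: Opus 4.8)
The plan is to reduce the whole family of inequalities to a single scale-invariant interpolation between \emph{consecutive} orders of differentiation, and then to chain those estimates by a discrete convexity argument. First I would dismiss the endpoint cases $j=0$ and $j=k$, which are tautologies, leaving only $1\le j\le k-1$, and I would reduce to $v\in C_c^\infty(\R^N)$ by a standard truncation and mollification; the estimate for general $v$ then follows by approximation, all the intervening norms being controlled along the way.

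The core step is the claim that, for $1\le j\le k-1$ and with the conventions $\|D^0v\|_{L^{2k/0}}:=\|v\|_{L^\infty}$ and $\|D^kv\|_{L^{2k/k}}:=\|D^kv\|_{L^2}$, one has
$$
\|D^jv\|_{L^{2k/j}}^2\leq C_{j,k}\,\|D^{j-1}v\|_{L^{2k/(j-1)}}\,\|D^{j+1}v\|_{L^{2k/(j+1)}};
$$
a quick check shows this relation is invariant under the scaling $v\mapsto v(\lambda\,\cdot)$, which is reassuring. To prove it I would fix a multi-index $\alpha$ with $|\alpha|=j$, write $D^\alpha=\partial_iD^\beta$ with $|\beta|=j-1$, set $p:=2k/j$ (note $p>2$ since $j\le k-1$), and integrate by parts in $\int_{\R^N}|\partial_iD^\beta v|^{p-2}(\partial_iD^\beta v)(\partial_iD^\beta v)$ so as to move one derivative onto $D^\beta v$; this produces $-(p-1)\int_{\R^N}(D^\beta v)\,|D^\alpha v|^{p-2}\,\partial_i^2D^\beta v$. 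The natural three-factor H\"older inequality, with exponents $\tfrac{2k}{j-1}$, $\tfrac{k}{k-j}$, $\tfrac{2k}{j+1}$ (whose reciprocals add up to $1$, and for which $\tfrac{k}{k-j}(p-2)=\tfrac{2k}{j}$), then bounds this by a constant times $\|D^{j-1}v\|_{L^{2k/(j-1)}}\,\|D^jv\|_{L^{2k/j}}^{p-2}\,\|D^{j+1}v\|_{L^{2k/(j+1)}}$, after using the trivial bounds $\|\partial_i^2D^\beta v\|_{L^q}\le\|D^{j+1}v\|_{L^q}$ and $\|D^\beta v\|_{L^q}\le\|D^{j-1}v\|_{L^q}$. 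Summing over the finitely many $\alpha$ with $|\alpha|=j$ and cancelling the common factor $\|D^jv\|_{L^{2k/j}}^{p-2}$ yields the claim; the case $k=2$, $j=1$, i.e.\ $\|Dv\|_{L^4}^2\le C\|v\|_{L^\infty}\|D^2v\|_{L^2}$, is the prototype that already displays the whole mechanism.

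It then remains to iterate. Putting $a_j:=\log\|D^jv\|_{L^{2k/j}}$ for $0\le j\le k$, with $a_0=\log\|v\|_{L^\infty}$ and $a_k=\log\|D^kv\|_{L^2}$, the claim reads $2a_j\le a_{j-1}+a_{j+1}+\log C_{j,k}$, i.e.\ the finite sequence $(a_j)$ is convex up to an additive constant; hence it lies below the chord joining its endpoints, $a_j\le\frac{k-j}{k}\,a_0+\frac jk\,a_k+\mathrm{const}$, which is precisely the announced inequality, the final constant being obtained by composing the constants $C_{j,k}$ through the convexity estimate. I expect the only genuine subtlety to be this preliminary reduction to smooth compactly supported $v$ (so that the integrations by parts are legitimate and the intermediate norms $\|D^jv\|_{L^{2k/j}}$ are a priori finite) together with keeping the H\"older exponents correctly bookkept; the convexity reformulation is exactly what turns an otherwise fiddly multi-step induction into a one-line conclusion.
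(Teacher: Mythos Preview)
The paper does not actually prove this lemma: it merely records it as a classical Gagliardo--Nirenberg inequality and refers to Friedman's book \cite{Fri} for the argument. So there is no ``paper's own proof'' to compare against.

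Your argument is correct and is, in fact, the standard route (essentially Nirenberg's original one): prove the consecutive-order interpolation
\[
\|D^jv\|_{L^{2k/j}}^2\leq C_{j,k}\,\|D^{j-1}v\|_{L^{2k/(j-1)}}\,\|D^{j+1}v\|_{L^{2k/(j+1)}}
\]
by writing $\int|D^\alpha v|^p=-(p-1)\int D^\beta v\,|D^\alpha v|^{p-2}\,\partial_i^2D^\beta v$ and applying the three-term H\"older inequality with the exponents you gave (the check $\tfrac{j-1}{2k}+\tfrac{k-j}{k}+\tfrac{j+1}{2k}=1$ and $(p-2)\tfrac{k}{k-j}=p$ is correct), then chain via the log-convexity of the sequence $a_j=\log\|D^jv\|_{L^{2k/j}}$. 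The reduction to $v\in C_c^\infty$ is indeed the right way to make the integrations by parts legitimate and to ensure the intermediate norms are finite; note also that $t\mapsto|t|^{p-2}t$ is $C^1$ for $p>2$ (and here $p=2k/j\ge 2k/(k-1)>2$), so the pointwise differentiation is justified. Two cosmetic remarks: the constant unavoidably depends on the ambient dimension $N$ through the number of multi-indices of a given order (the paper's own statement is slightly imprecise on this point; see the constants $C_{k,N}$ appearing immediately afterwards in Lemma~\ref{lem:A.1}); and the phrase ``cancelling the common factor $\|D^jv\|_{L^{2k/j}}^{p-2}$'' is fine once you first bound each $\|D^\alpha v\|_{L^p}^{p-2}$ by $\|D^jv\|_{L^p}^{p-2}$ before summing over $|\alpha|=j$.
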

The Gagliardo-Nirenberg inequalities stated above
will enable us to prove the following
tame estimates for the product of two functions~:
\begin{lem}\label{lem:A.1}
Let $k\in\N$ and  $j\in\{0,\cdots,k\}$.
  There exists a constant  $C_{k,N}$ depending only on $(k,N)$, such that 
\begin{eqnarray}\label{tame1}
&\|D^ju\,D^{k-j}v\|_{L^2}
\leq C_{j,k}\Bigl(\|u\|_{L^\infty}\|D^{k}v\|_{L^2}
+\|v\|_{L^\infty}\|D^ku\|_{L^2}\Bigr),\\
\label{tame2}
&\|uv\|_{H^k}\leq C_k\Bigl(\|u\|_{L^\infty}\|u\|_{H^k}+
\|v\|_{L^\infty}\|u\|_{H^{k}}\Bigr).
\end{eqnarray}
\end{lem}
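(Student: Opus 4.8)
The plan is to derive \eqref{tame1} from the Gagliardo--Nirenberg inequalities of Lemma \ref{Gagliardo} together with H\"older's and Young's inequalities, and then to obtain \eqref{tame2} by expanding $D^k(uv)$ via the Leibniz rule and applying \eqref{tame1} to each summand.

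For \eqref{tame1}, I would first treat the endpoint indices $j=0$ and $j=k$ (and the trivial case $k=0$), for which the bound is immediate from $\|D^ju\,D^{k-j}v\|_{L^2}\le\|u\|_{L^\infty}\|D^kv\|_{L^2}$ and its mirror image. For $1\le j\le k-1$, since $\tfrac{j}{2k}+\tfrac{k-j}{2k}=\tfrac{1}{2}$, H\"older's inequality gives
\[
\|D^ju\,D^{k-j}v\|_{L^2}\le\|D^ju\|_{L^{2k/j}}\,\|D^{k-j}v\|_{L^{2k/(k-j)}}.
\]
Applying Lemma \ref{Gagliardo} to each factor (with parameters $(j,k)$ and $(k-j,k)$ respectively), the right-hand side is bounded by $C_{j,k}\,A^{1-j/k}B^{j/k}$, where $A:=\|u\|_{L^\infty}\|D^kv\|_{L^2}$ and $B:=\|v\|_{L^\infty}\|D^ku\|_{L^2}$; here one uses that $\|u\|_{L^\infty}^{1-j/k}$ and $\|v\|_{L^\infty}^{j/k}$ recombine with the $L^2$ factors into powers of $A$ and $B$. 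Young's inequality with conjugate exponents $\tfrac{k}{k-j}$ and $\tfrac{k}{j}$ then gives $A^{1-j/k}B^{j/k}\le A+B$, which is precisely \eqref{tame1}.

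For \eqref{tame2}, I would use the Leibniz formula $D^k(uv)=\sum_{j=0}^{k}\binom{k}{j}D^ju\,D^{k-j}v$ (with the usual multi-index interpretation) to write
\[
\|uv\|_{H^k}\le C\Bigl(\|u\|_{L^\infty}\|v\|_{L^2}+\sum_{j=0}^{k}\binom{k}{j}\|D^ju\,D^{k-j}v\|_{L^2}\Bigr),
\]
bound the summands with $1\le j\le k-1$ by \eqref{tame1}, estimate the $j=0$ and $j=k$ terms directly, and then absorb everything using $\|D^{\ell}w\|_{L^2}\le\|w\|_{H^k}$ for $0\le\ell\le k$; this leads to the (symmetric) right-hand side $C_k\bigl(\|u\|_{L^\infty}\|v\|_{H^k}+\|v\|_{L^\infty}\|u\|_{H^k}\bigr)$, which is the intended form of the estimate.

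The computation is essentially routine and I do not anticipate a genuine obstacle; the only points requiring mild care are verifying that the H\"older exponents are conjugate, tracking the Gagliardo--Nirenberg exponents so that they recombine into the symmetric form $A^{1-j/k}B^{j/k}$ before Young's inequality is applied, and dealing with the degenerate indices $j\in\{0,k\}$, where the interpolation exponent $2k/j$ fails to be finite and one argues directly by H\"older's inequality.
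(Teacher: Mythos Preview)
Your proposal is correct and follows essentially the same route as the paper: H\"older with exponents $2k/j$ and $2k/(k-j)$, then Gagliardo--Nirenberg to reach $A^{1-j/k}B^{j/k}$, then Young, with the endpoints handled directly and \eqref{tame2} deduced from \eqref{tame1} via Leibniz. You also correctly identify the typo in \eqref{tame2}, whose intended right-hand side is the symmetric expression $C_k\bigl(\|u\|_{L^\infty}\|v\|_{H^k}+\|v\|_{L^\infty}\|u\|_{H^k}\bigr)$.
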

\begin{proof} 
Note that Leibniz formula combined with inequality \eqref{tame1}
yields \eqref{tame2}.
So let us prove the first inequality.
According to H\"older inequality, we have
$$
\|D^ju\,D^{k-j}v\|_{L^2}\leq \|D^ju\|_{L^{\frac{2k}j}}
\|D^{k-j}v\|_{L^{\frac{2k}{k-j}}}.
$$
This yields \eqref{tame1} if $j=0$ or $k$. Otherwise,  using
Lemma \ref{Gagliardo}, one can write that
$$\begin{array}{lll}
\|D^ju\,D^{k-j}v\|_{L^2}&\leq&C_{k,N}
\Bigl(\|u\|_{L^\infty}^{1-\frac jk}\|D^ku\|_{L^2}^{\frac jk}\Bigr)
\Bigl(\|v\|_{L^\infty}^{{\frac jk}}\|D^{k}v\|_{L^2}^{1-\frac jk}
\Bigr),\cr
&\leq&C_{k,N}
\Bigl(\|u\|_{L^\infty}\|D^{k}v\|_{L^{2}}\Bigr)^{1-\frac jk}
\Bigl(\|v\|_{L^\infty} \|D^ku\|_{L^2}\Bigr)^{\frac jk},
\end{array}
$$
and Young inequality  leads to $(\ref{tame1})$. 
\end{proof}
The tame estimates for the product of two functions extend
in every $H^s$ with $s\geq0$ and 
in the Besov space framework as follows (see the proof in e.g. \cite{BCD},
Chap. 2).
\begin{prop}\label{p:besov}
For any $r\in[1,\infty]$ and $s>0$ there exists a constant 
$C$ such that 
$$
\|uv\|_{B^s_{2,r}}\leq C\bigl(\|u\|_{L^\infty}\|v\|_{B^s_{2,r}}
+\|v\|_{L^\infty}\|u\|_{B^s_{2,r}}\bigr).
$$
\end{prop}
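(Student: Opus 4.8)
The plan is to use Bony's paraproduct decomposition together with the spectral localisation properties already recorded in \eqref{quasiorthogonality} and Bernstein's inequality; the precise definitions and a complete argument may be found in \cite{BCD}, Chapter~2, so I only sketch the structure. Writing, for tempered distributions $u$ and $v$,
$$
uv=T_uv+T_vu+R(u,v),\qquad
T_uv:=\sum_{q}S_{q-1}u\,\dq v,\qquad
R(u,v):=\sum_{q}\sum_{|q-q'|\le1}\dq u\,\Delta_{q'}v,
$$
it suffices to bound each of the three pieces in $B^s_{2,r}$.

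First I would handle the two paraproducts, for which no sign condition on $s$ is needed. By \eqref{quasiorthogonality}, only the indices $q$ with $|q-j|\le4$ contribute to $\Delta_j(T_uv)$. Since $S_{q-1}=\chi(2^{-q}{\rm D})$ is bounded on $L^\infty$ uniformly in $q$, this gives
$$
\|\Delta_j(T_uv)\|_{L^2}\le\sum_{|q-j|\le4}\|S_{q-1}u\|_{L^\infty}\|\dq v\|_{L^2}
\le C\|u\|_{L^\infty}\!\!\sum_{|q-j|\le4}\!\!\|\dq v\|_{L^2}.
$$
Multiplying by $2^{js}$ and taking the $\ell^r$ norm in $j$ yields $\|T_uv\|_{B^s_{2,r}}\le C\|u\|_{L^\infty}\|v\|_{B^s_{2,r}}$, and exchanging the roles of $u$ and $v$ one gets likewise $\|T_vu\|_{B^s_{2,r}}\le C\|v\|_{L^\infty}\|u\|_{B^s_{2,r}}$.

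The remainder is the only delicate point, and it is here that the hypothesis $s>0$ is used. The Fourier support of $\dq u\,\Delta_{q'}v$ with $|q-q'|\le1$ lies in a ball of radius comparable to $2^{q}$, so that $\Delta_j R(u,v)=\sum_{q\ge j-N_0}\sum_{|q'-q|\le1}\Delta_j(\dq u\,\Delta_{q'}v)$ for some fixed integer $N_0$; note that, in contrast with the paraproduct case, there is no lower restriction on $q$ for fixed $j$. Using Bernstein's inequality $\|\dq u\|_{L^\infty}\le C\|u\|_{L^\infty}$ we obtain
$$
2^{js}\|\Delta_j R(u,v)\|_{L^2}
\le C\|u\|_{L^\infty}\sum_{q\ge j-N_0}2^{(j-q)s}\,\Bigl(2^{qs}\!\!\sum_{|q'-q|\le1}\!\!\|\Delta_{q'}v\|_{L^2}\Bigr).
$$
Since $s>0$, the sequence $(2^{-ks})_{k\ge-N_0}$ is summable, so by Young's inequality for series (convolution of an $\ell^1$ sequence with an $\ell^r$ sequence) the right-hand side is the general term of an $\ell^r$ sequence whose norm is at most $C\|u\|_{L^\infty}\|v\|_{B^s_{2,r}}$; by the symmetry $R(u,v)=R(v,u)$ it is equally bounded by $C\|v\|_{L^\infty}\|u\|_{B^s_{2,r}}$. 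Summing the contributions of $T_uv$, $T_vu$ and $R(u,v)$ then gives the announced estimate.

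The hard part is thus confined to the remainder: for $s\le0$ the geometric series $\sum_{q\ge j-N_0}2^{(j-q)s}$ diverges, which is precisely why the inequality genuinely fails in that range; everything else reduces to routine bookkeeping of Littlewood--Paley blocks.
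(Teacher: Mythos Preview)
Your argument is correct and is precisely the standard Bony paraproduct proof; the paper does not give its own proof but simply refers to \cite{BCD}, Chapter~2, where the same decomposition $uv=T_uv+T_vu+R(u,v)$ and the same bookkeeping are carried out. One minor point of terminology: the bound $\|\dq u\|_{L^\infty}\le C\|u\|_{L^\infty}$ is not Bernstein's inequality strictly speaking but rather the uniform $L^\infty$-boundedness of the Littlewood--Paley projectors (their convolution kernels have $L^1$ norm independent of $q$); this does not affect the validity of your argument.
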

We also  recall the following continuity results
in Besov spaces for the left-composition
(see again e.g. \cite{BCD}, Chap. 2).
\begin{prop}\label{p:compo}
Let $F$ be a smooth function defined on some  open interval $I$
containing $0,$ and such that $F(0)=0.$
For any $r\in[1,\infty],$ $s>0$ and  compact subset $J$ of $I,$  there exists a
constant  $C$  such that for any function $u\in B^s_{2,r}$ valued in $J$ we
have  
$$\|F(u)\|_{B^s_{2,r}}\leq C\|u\|_{B^s_{2,r}}.
$$
\end{prop}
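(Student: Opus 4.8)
\smallskip
\noindent\textbf{Proof proposal.} The plan is to reduce to a globally bounded smooth nonlinearity and then run a Littlewood--Paley telescoping argument, in the spirit of the product estimate of Proposition~\ref{p:besov}. \emph{Reduction.} As $J$ is compact and $I$ open, pick a closed interval $\widetilde J$ with $J\subset\operatorname{int}\widetilde J\subset\widetilde J\subset I$ and a cut-off $\theta\in C^\infty_c(I)$ equal to $1$ on $\widetilde J$; extending $\theta F$ by zero produces $\widetilde F\in C^\infty_c(\R)$ with $\widetilde F(0)=0$ and $\widetilde F(u)=F(u)$ for every $u$ valued in $J$. So we may assume henceforth that $F$ and all of its derivatives are bounded on $\R$, and let $C$ depend on finitely many of these bounds, on $s,N,r$, and on $\sup_x|u(x)|\le C(J)$.

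\emph{Telescoping.} Writing $u=\lim_{q\to\infty}S_qu$ and $S_{q+1}-S_q=\dq$ for $q\ge0$, one has (as tempered distributions)
$$
F(u)=F(S_0u)+\sum_{q\ge0}\bigl(F(S_{q+1}u)-F(S_qu)\bigr)=F(S_0u)+\sum_{q\ge0}m_q\,\dq u,\qquad m_q:=\int_0^1F'\bigl(S_qu+t\,\dq u\bigr)\,dt.
$$
Since $S_qu$ and $\dq u$ are spectrally supported in $B(0,C2^q)$, the Bernstein inequalities yield $\|D^m(S_qu)\|_{L^\infty}+\|D^m(\dq u)\|_{L^\infty}\le C_m2^{qm}\|u\|_{L^\infty}$; together with the Fa\`a di Bruno formula and the boundedness of $F',F'',\dots$ this gives $\|m_q\|_{L^\infty}\le\|F'\|_{L^\infty}$ and $\|D^m m_q\|_{L^\infty}\le C_m2^{qm}$ for all $m\ge1$, uniformly in $q$ and --- the key point --- with no power of $\|u\|_{B^s_{2,r}}$. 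Hence $\|\Delta_p m_q\|_{L^\infty}\le C_M2^{-(p-q)M}$ for every $M$ whenever $p\ge q$.

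\emph{The Littlewood--Paley estimate.} Apply $\Delta_j$ to the series, and for each $q$ split $m_q=S_{q-N_0}m_q+(\mathrm{Id}-S_{q-N_0})m_q$ with $N_0$ a fixed large integer: the term $S_{q-N_0}m_q\cdot\dq u$ is spectrally supported in $\{|\xi|\sim2^q\}$ with $L^2$ norm $\le\|m_q\|_{L^\infty}\|\dq u\|_{L^2}$, while $(\mathrm{Id}-S_{q-N_0})m_q\cdot\dq u$ is estimated through $\|\Delta_p m_q\|_{L^\infty}\le C_M2^{-(p-q)M}$ for $p\ge q$; one obtains, for every $M$, $\|\Delta_j(m_q\,\dq u)\|_{L^2}\le C\min(1,2^{-(j-q)M})\|\dq u\|_{L^2}$. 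Multiplying by $2^{js}$ and summing over $q$,
$$
2^{js}\|\Delta_j F(u)\|_{L^2}\le 2^{js}\|\Delta_j F(S_0u)\|_{L^2}+C\sum_{q\ge0}\kappa_M(j-q)\,2^{qs}\|\dq u\|_{L^2},\qquad\kappa_M(\ell):=2^{\ell s}\min(1,2^{-\ell M}).
$$
Because $s>0$, the kernel $\kappa_M$ lies in $\ell^1(\Z)$ as soon as $M>s$ --- this is exactly where $s>0$ enters --- so Young's inequality for sequences gives $\|F(u)\|_{B^s_{2,r}}\le C\|F(S_0u)\|_{B^s_{2,r}}+C\|u\|_{B^s_{2,r}}$. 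The term $F(S_0u)$ is harmless: $S_0u$ has Fourier support in $B(0,4/3)$, so $\|D^m(S_0u)\|_{L^\infty}\le C_m\|u\|_{L^\infty}$ and $\|D^m(S_0u)\|_{L^2}\le C_m\|u\|_{L^2}$; by Fa\`a di Bruno (one factor measured in $L^2$, the rest in $L^\infty$) together with $F(0)=0$ this gives $\|D^m F(S_0u)\|_{L^2}\le C_m\|u\|_{L^2}$ for every $m$, and since $\|\Delta_j F(S_0u)\|_{L^2}\le C\min(1,2^{-jM})\bigl(\|F(S_0u)\|_{L^2}+\|D^M F(S_0u)\|_{L^2}\bigr)$, a summation with $M>s$ yields $\|F(S_0u)\|_{B^s_{2,r}}\le C\|u\|_{L^2}\le C\|u\|_{B^s_{2,r}}$, the last step using $B^s_{2,r}\hookrightarrow L^2$ for $s>0$. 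This closes the argument.

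\emph{Main obstacle.} The one genuinely delicate point is the frequency interaction in $\Delta_j(m_q\,\dq u)$: one must know that the high-frequency content of the nonlinear, non-spectrally-localized coefficient $m_q$ decays fast, which is the Bernstein bound $\|D^m m_q\|_{L^\infty}\le C_m2^{qm}$ (valid precisely because $m_q$ is a smooth function of the spectrally localized quantities $S_qu$ and $\dq u$), and that the resulting kernel $\kappa_M$ is summable, which forces $M>s$ and is where $s>0$ is needed. Keeping all higher-order contributions inside the constant --- so that the final bound remains \emph{linear} in $\|u\|_{B^s_{2,r}}$ --- is the other bookkeeping point; it works because in each summand only a single factor $\dq u$ (respectively $D^m(S_0u)$) is measured in $L^2$, everything else being an $L^\infty$ quantity bounded independently of $\|u\|_{B^s_{2,r}}$. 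A slightly heavier alternative treats $0<s<1$ directly from the finite-difference norm of $B^s_{2,r}$ and the Lipschitz bound $|F(u(x+h))-F(u(x))|\le\|F'\|_{L^\infty}|u(x+h)-u(x)|$, and then induces on $\lceil s\rceil$ via $\partial_j F(u)=F'(u)\,\partial_j u$ and Bony's paraproduct and remainder estimates; see \cite{BCD}.
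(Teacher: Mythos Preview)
Your argument is correct; it is the standard Meyer first-linearization (telescoping) proof of the composition estimate in $B^s_{2,r}$. The paper itself does not prove Proposition~\ref{p:compo} but simply quotes it from \cite{BCD}, Chapter~2, where exactly this argument is carried out: write $F(u)=F(S_0u)+\sum_{q\ge0}m_q\,\dq u$, control the high-frequency content of the smooth coefficients $m_q$ by Bernstein and Fa\`a~di~Bruno, and sum via a discrete Young inequality using $s>0$. Your handling of the low-frequency piece $F(S_0u)$ and the bookkeeping that keeps the final bound linear in $\|u\|_{B^s_{2,r}}$ (one $L^2$ factor, the rest $L^\infty$) are both fine. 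The alternative route you mention at the end --- finite differences for $0<s<1$ and induction on $\lceil s\rceil$ via $\partial_j F(u)=F'(u)\partial_j u$ and paraproduct estimates --- is the other standard proof, also in \cite{BCD}; either is acceptable here.
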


\noindent
{\bf Proof of  Lemma \ref{l:rouge}}.
We assume that condition \eqref{eq:strichartz0} holds. 
Using the fact that 
$v=v_\ell+v_h$ and Parseval formula, 
we easily get 
$$
\|v\|_{B^\sigma_{2,r}}\leq \|v_\ell\|_{B^\sigma_{2,r}}
+\|v_h\|_{B^\sigma_{2,r}}\leq 2\|v\|_{B^\sigma_{2,r}}.
$$
Next, because $b=b_\ell+b_h$ and
$\eps|\xi|\geq\eps_0$ for $\xi\in\Supp b_h,$
one may write
$$
\|b\|_{B^\sigma_{2,r}}\leq \|b_\ell\|_{B^\sigma_{2,r}}+\eps_0^{-1}
\|\eps\nabla b_h\|_{B^\sigma_{2,r}}\quad\hbox{and}\quad
\|b_\ell\|_{B^\sigma_{2,r}}\leq \|b\|_{B^\sigma_{2,r}}.
$$
According to \eqref{eq:pota}, we have $\Im z
=-\nabla\log(1+\eps b/\sqrt2).$
Therefore, condition \eqref{eq:strichartz0}
and Proposition \ref{p:compo} imply that
$$
\|\Im z\|_{B^\sigma_{2,r}}\leq \|\log(1+\eps b/\sqrt2)\|_{B^{\sigma+1}_{2,r}}
\leq C\eps\|b\|_{B^{\sigma+1}_{2,r}}.
$$
Now, using the definition of the $B^{\sigma+1}_{2,r}$ norm
and of $b_\ell, b_h,$ one may write
for a suitable $\eps_1>\eps_0,$
$$
\eps\|b\|_{B^{\sigma+1}_{2,r}}
\leq \sum_{\eps 2^q\leq\eps_1}(\eps 2^q)\,2^{q\sigma}\|\Delta_qb_\ell\|_{L^2}
+\sum_{\eps 2^q\geq\eps_0} 2^{q\sigma}\,(\eps2^q\|\Delta_qb_h\|_{L^2})
\leq C\bigl(\|b_\ell\|_{B^\sigma_{2,r}}+\|\eps\nabla
b_h\|_{B^\sigma_{2,r}}\bigr). $$
Therefore 
$$
\|(b,\Im z)\|_{B^\sigma_{2,r}}\leq C\bigl(\|b_\ell\|_{B^\sigma_{2,r}}+
\|\eps\nabla b_h\|_{B^\sigma_{2,r}}\bigr).
$$
In order to complete the proof of  \eqref{eq:equiv1}, 
we still have to show that
\begin{equation}\label{eq:ap1}
\|\eps\nabla b_h\|_{B^\sigma_{2,r}}\leq C \|(b,\Im z)\|_{B^\sigma_{2,r}}.
\end{equation}
In fact, as $L:z\mapsto\log(1+z)$ is a smooth diffeomorphism
from $(a,b)$ to $L((a,b))$ for any $0<a<b,$ and  vanishes at $0,$
Proposition \ref{p:compo} enables us to write that 
$$\begin{array}{lll}
\|\eps\nabla b_h\|_{B^\sigma_{2,r}}&\leq&\|\eps b\|_{B^{\sigma+1}_{2,r}},\\[1ex]
&\leq& C \|\log(1+\frac{\eps
b}{\sqrt2})\|_{B^{\sigma+1}_{2,r}},\\[1ex] &\leq&C\bigl(\|\log(1+\frac{\eps
b}{\sqrt2})\|_{B^{\sigma}_{2,r}} +\|\nabla\log(1+\frac{\eps
b}{\sqrt2})\|_{B^{\sigma}_{2,r}}\bigr),\\[1ex]
&\leq&C\bigl(\eps\|b\|_{B^{\sigma}_{2,r}}  +\|\Im
z\|_{B^{\sigma}_{2,r}}\bigr).\end{array} $$
This completes the proof of \eqref{eq:ap1} thus of \eqref{eq:equiv1}.
\medbreak
Let us now turn to the proof of inequality \eqref{eq:equiv2}.
Because $$
z=v-i\frac\eps{\sqrt2}\frac{\nabla b}{1+\frac\eps{\sqrt2}b}\quad\hbox{and}\quad
\nabla z=\nabla v-i\frac\eps{\sqrt2}\frac{\nabla^2 b}{1+\frac\eps{\sqrt2}b}
-i\frac{\eps^2}2\frac{|\nabla  b|^2}{\bigl(1+\frac\eps{\sqrt2}b\bigr)^2},
$$
condition \eqref{eq:strichartz0} guarantees that 
$$
\|(b,z)\|_{C^{0,1}}\leq \|v\|_{C^{0,1}}+C\eps\|\nabla b\|_{C^{0,1}}.
$$
Let us notice that, whenever $\tilde \chi\in{\mathcal C}_c^\infty$
has value $1$ on $\Supp\chi,$ 
one may write $b_\ell=\tilde\chi(\eps^{-1}D) b_\ell.$
Therefore,  there exists a $L^1(\R^N;\R^N)$ function $k$ 
so that
$$
(\eps\nabla b)_\ell=\eps\nabla\tilde\chi(\eps D) b_\ell
=\eps^{-N}k(\eps^{-1}\cdot)\star b_\ell.
$$
This ensures that
  $$
\eps\|\nabla b\|_{C^{0,1}}\leq \|(\eps\nabla b)_\ell\|_{C^{0,1}}
+\|(\eps\nabla b)_h\|_{C^{0,1}}\leq C\|b_\ell\|_{C^{0,1}}
+\|(\eps\nabla b)_h\|_{C^{0,1}}.
$$
so that
$$
\|(b,z)\|_{C^{0,1}}\leq C\bigl(\|(b,v)_\ell\|_{C^{0,1}}
+\|(\eps Db,v)_h\|_{C^{0,1}}\bigr).
$$
The reverse inequality follows from similar arguments. 
The details are left to the reader.\qed
\medbreak
The following commutation lemma  is central in  the proof of Proposition \ref{prop:1besov}.
\begin{lem}\label{l:alamormoilnoeud}
Let $s>0,$  $r\in[1,\infty]$ and $\psi$ be a smooth function compactly 
supported in an annulus $\{\xi\in\R^N\,/\, R_1\leq|\xi|\leq R_2\}.$
 There exists a constant $C$ depending only on $\psi$ and $s$ such that 
for all $q\in\N$ the 
 following estimate holds true: 
\begin{equation}\label{eq:com}  \|[a,\psi(2^{-q}D)]f\|_{L^2}\leq
Cc_q2^{-qs}\bigl(  \|Da\|_{L^\infty}\|f\|_{B^{s-1}_{2,r}}+
 \|Da\|_{B^{s-1}_{2,r}}\|f\|_{L^\infty}\bigr)
 \end{equation}
 for some sequence $(c_q)_{q\in\N}$ with $\|c_q\|_{\ell^r}=1.$
 
 A similar estimate is true with $q=0$ if
 $\psi$ is only supported in a ball.  
 \end{lem}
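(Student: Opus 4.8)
The plan is to reduce the commutator estimate to a standard result from Littlewood--Paley theory, namely the Bony decomposition together with the classical commutator estimate of Coifman--Meyer type adapted to homogeneous differences. First I would write $\psi(2^{-q}D)f = h_q \star f$ where $h_q(x) = 2^{qN} h(2^q x)$ and $\widehat h = \psi$; since $\psi$ is supported in an annulus, $h$ has zero integral, which is what produces the gain. The starting point is the pointwise identity
\begin{equation*}
[a,\psi(2^{-q}D)]f(x) = \int_{\R^N} h_q(x-y)\,\bigl(a(y)-a(x)\bigr)\,f(y)\,dy
= -\int_{\R^N} h_q(z)\Bigl(\int_0^1 z\cdot\nabla a(x-\tau z)\,d\tau\Bigr) f(x-z)\,dz,
\end{equation*}
using the fundamental theorem of calculus along the segment $[x-z,x]$. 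This already displays the first order gain: one full derivative falls on $a$ and is compensated by the factor $z$ against $h_q$, which costs $2^{-q}$.

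Next I would estimate the $L^2$ norm. The cleanest way is to invoke the standard commutator lemma (as stated for instance in \cite{BCD}, Lemma 2.97 and Lemma 2.100): for a function $g$ with $Dg\in L^\infty$ one has
\begin{equation*}
2^{q}\,\|[g,\psi(2^{-q}D)]f\|_{L^2}\leq C\,c_q\,2^{-q(s-1)}\bigl(\|Dg\|_{L^\infty}\|f\|_{B^{s-1}_{2,r}}+\|Dg\|_{B^{s-1}_{2,r}}\|f\|_{L^\infty}\bigr),
\end{equation*}
where $(c_q)$ is a sequence of $\ell^r$ norm one. Rearranging, this is exactly \eqref{eq:com} with $a=g$. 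If one wishes a self-contained argument rather than quoting it, one decomposes $a = \sum_{p}\Delta_p a$ and $f=\sum_{p'}\Delta_{p'}f$ in the integral representation above, exploits the spectral localization to see that the commutator of $\psi(2^{-q}D)$ with $\Delta_p a$ vanishes unless $p\gtrsim q$ or $p\lesssim q$ according to which piece of the paraproduct one is in, and in each regime uses the mean-value bound together with Bernstein's inequality: the ``high-high'' and ``low-high'' interactions in $a$ are controlled by $\|Da\|_{B^{s-1}_{2,r}}\|f\|_{L^\infty}$, while the ``low frequency of $a$ times comparable frequency of $f$'' piece is controlled by $\|Da\|_{L^\infty}\|f\|_{B^{s-1}_{2,r}}$. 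Summing the geometric series in the frequency gap (which converges because $s>0$, so that no borderline logarithmic loss occurs) produces the square-summable — more precisely $\ell^r$-summable — sequence $c_q$.

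The main obstacle, and the only place where some care is genuinely needed, is the bookkeeping of the two different ``tame'' terms on the right-hand side and making sure the sequence $(c_q)$ is genuinely of $\ell^r$ norm one rather than merely bounded: one must peel off the $\ell^r$ norm of $Da$ (resp.\ of $f$) and leave behind a normalized sequence, which forces a judicious grouping of the paraproduct terms before applying Young's inequality for series. The condition $s>0$ is used precisely here to guarantee convergence of $\sum_{\ell\geq 0}2^{-\ell s}$ after a shift of indices. Finally, for the case $q=0$ with $\psi$ supported only in a ball rather than an annulus, the function $h$ no longer has zero mean, so the telescoping identity above is replaced by the trivial bound $\|[a,\psi(D)]f\|_{L^2}\le \|a\|_{L^\infty}\|\psi(D)f\|_{L^2}+\|\psi(D)(af)\|_{L^2}$ absorbed into the low-frequency block $c_0$; one then simply notes $\|a\|_{L^\infty}\lesssim \|Da\|_{B^{s-1}_{2,r}}$ is \emph{not} available in general, so instead one keeps the $q=0$ contribution with its own constant, which is harmless since only finitely many (indeed one) terms are affected and they are already subsumed in the low-frequency part of every Besov norm appearing in Proposition \ref{prop:1besov}.
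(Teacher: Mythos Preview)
Your main approach is essentially the paper's: write the commutator through the convolution kernel, extract one derivative of $a$ via the mean-value identity (this is exactly the content of Lemma~2.93 in \cite{BCD}, which the paper invokes), and then organize the pieces by Bony's decomposition. The paper carries this out by first splitting $a=S_0a+\tilde a$, handling $[S_0a,\psi(2^{-q}D)]f$ directly via spectral localization and the mean-value commutator bound, and then writing $[\tilde a,\psi(2^{-q}D)]f=[T_{\tilde a},\psi(2^{-q}D)]f+T'_{\psi(2^{-q}D)f}\tilde a-\psi(2^{-q}D)T'_f\tilde a$; the last term is the one requiring $s>0$ through the continuity of $T'_f$ on $B^s_{2,r}$. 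Your sketch matches this in substance, though you are vaguer about which paraproduct piece produces which of the two tame terms.

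There is one genuine slip, in your handling of the $q=0$ ball-supported case. You write that ``the function $h$ no longer has zero mean, so the telescoping identity above is replaced by the trivial bound''. But the identity
\[
[a,\psi(D)]f(x)=\int h(x-y)\bigl(a(y)-a(x)\bigr)f(y)\,dy
\]
does \emph{not} use that $\int h=0$; it is just the definition of the commutator expanded. The zero-mean property of $h$ would matter if you were trying to write $\psi(D)a$ itself as an integral of differences, but that is not what is happening here. Consequently the mean-value/kernel argument works for ball-supported $\psi$ exactly as for the annular case (with the obvious modification that $\psi(D)f$ is now supported in low frequencies $|q'|\leq N_0$ rather than $|q'-q|\leq N_0$), and there is no need for the awkward patch you propose. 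Your suggested fallback --- bounding $\|[a,\psi(D)]f\|_{L^2}$ by $\|a\|_{L^\infty}$-type quantities and then declaring the resulting uncontrolled term ``harmless for the application'' --- would not prove the lemma as stated and should be dropped.
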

\begin{proof}
Decomposing $a$ into $a=S_0a+\tilde a,$ one may 
write
\begin{equation}\label{eq:com0}
[a,\psi(2^{-q}D)]f=[S_0a,\psi(2^{-q}D)]f+[\tilde a,\psi(2^{-q}D)]f.
\end{equation}
Remark that, owing to the support properties of $\psi,$ there exists
some integer $N_0$ such that
$$
[S_0a,\psi(2^{-q}D)]f=\sum_{|q'-q|\leq N_0}
[S_0a,\psi(2^{-q}D)]\Delta_{q'}f.
$$
Now, according
to Lemma 2.93 in \cite{BCD}, we have
$$
\|[S_0a,\psi(2^{-q}D)]\Delta_{q'}f\|_{L^2}\leq
C2^{-q}\|DS_0a\|_{L^\infty}\|\Delta_{q'}f\|_{L^2}, $$
whence, since $\|DS_0a\|_{L^\infty}\leq C\|Da\|_{L^\infty},$ 
$$
2^{qs}\|[S_0a,\psi(2^{-q}D)]f\|_{L^2}\leq C2^{-qs}\|Da\|_{L^\infty}
\sum_{|q'-q|\leq
N_0}2^{(q-q')(s-1)}\,\bigl(2^{q'(s-1)}\|\Delta_{q'}f\|_{L^2}\bigr) 
$$
so that we find that, for some sequence $(c_q)_{q\in\N}$ such that
$\|c_q\|_{q\in\N}=1$ and 
\begin{equation}\label{eq:com1}
\|[S_0a,\psi(2^{-q}D)]f\|_{L^2}\leq Cc_q2^{-qs}\|Da\|_{L^\infty}
\|f\|_{B^{s-1}_{2,r}}. 
\end{equation}
To deal  with the last term in \eqref{eq:com0}, 
one may take advantage of the paradifferential calculus based on a Littlewood-Paley decomposition, 
a tool introduced by J.-M. Bony in \cite{bony}. 
The paraproduct of two tempered distributions $u$ and $v$
is defined by 
$$
T_uv:=\sum_{q\geq1}S_{q-1}u\dq v
$$
and we have the following (formal) Bony's decomposition for the
product of two distributions:
$$
uv=T_uv+T'_vu\quad\hbox{with}\quad
T'_vu:=\sum_{q\geq-1}S_{q+2}v\Delta_{q}u.$$
This leads us to expand 
$[\tilde a,\psi(2^{-q}D)]f$ into
$$[\tilde a,\psi(2^{-q}D)]f=
[T_{\tilde a},\psi(2^{-q}D)]f+T'_{\psi(2^{-q}D) f}\tilde a-\psi(2^{-q}D)T'_f\tilde a.
$$
Taking advantage of the support properties of $\psi,$ one may write
for some suitable integer $N_0,$
$$
[T_{\tilde a},\dq]f=\sum_{\substack{q'\geq1\\|q'-q|\leq N_0}} [S_{q'-1}\tilde
a, \psi(2^{-q}D)] \Delta_{q'}f.
$$
Using again Lemma 2.93 in \cite{BCD}, one may write
$$
\|[S_{q'-1}\tilde
a, \psi(2^{-q}D)] \Delta_{q'}f\|_{L^2}
\leq C2^{-q}\|DS_{q'-1}\tilde a\|_{L^\infty}
\|\Delta_{q'}f\|_{L^2},
$$
so that we find 
\begin{equation}
\label{eq:com2}
\|[T_{\tilde a},\psi(2^{-q}D)]f\|_{L^2}\leq Cc_q2^{-qs}\|Da\|_{L^\infty}
\|f\|_{B^{s-1}_{2,r}}.
\end{equation}
Next, we have
$$
T'_{\psi(2^{-q}D)f}\tilde a=\sum_{q'\geq q-N_0} S_{q'+2}\psi(2^{-q}D)f\,
\Delta_{q'}\tilde a.
$$
Therefore
\begin{equation}\label{eq:com3}
\|T'_{\psi(2^{-q}D)f}\tilde a\|_{L^2}
\leq \sum_{q'\geq q-N_0} \|\psi(2^{-q}D)f\|_{L^2}\,
\|\Delta_{q'}\tilde a\|_{L^\infty}.
\end{equation}
Because ${\mathcal F}\tilde a$ is supported away from the origin, 
Bernstein inequality ensures that 
$\|\Delta_{q'}\tilde a\|_{L^\infty}\leq C2^{-q'}\|Da\|_{L^\infty}.$
Inserting this inequality in \eqref{eq:com3}, we thus get
$$
\|T'_{\psi(2^{-q}D)f}\tilde a\|_{L^2}
\leq C2^{-qs}\|Da\|_{L^\infty}\bigl(2^{q(s-1)}\|\psi(2^{-q}D)f\|_{L^2}\bigr)
\sum_{q'\geq q-N_0}2^{q-q'},
$$
whence
\begin{equation}\label{eq:com4}
\|T'_{\psi(2^{-q}D)f}\tilde a\|_{L^2}
\leq C \|Da\|_{L^\infty}\,2^{q(s-1)} \|\psi(2^{-q}D)f\|_{L^2}.
\end{equation}
Note that  because $\Delta_{q'}\psi(2^{-q}D)=0$ for $|q'-q|>N_0,$
there exists  some sequence $(c_q)_{q\in\N}$ with $\|c_q\|_{\ell^r}=1$
such that (see \cite[Section 2.7]{BCD})
\begin{equation}\label{eq:com4a}
2^{q(s-1)} \|\psi(2^{-q}D)f\|_{L^2}\leq Cc_q\|f\|_{B^{s-1}_{2,r}}.
\end{equation}
Finally, standard continuity results for the paraproduct\footnote{Here
we need that $s>0$, see \cite[Section 2.8]{BCD}} ensure
that  $$
\|T'_f\tilde a\|_{B^{s}_{2,r}}
\leq C \|f\|_{L^\infty}\|\tilde a\|_{B^{s}_{2,r}}
\leq C \|f\|_{L^\infty}\|Da\|_{B^{s-1}_{2,r}}
$$
so that, because $\Delta_{q'}\psi(2^{-q}D)=0$ for $|q'-q|>N_0,$
\begin{equation}\label{eq:com5}
\|\psi(2^{-q}D)T'_f\tilde a\|_{L^2}
\leq  C c_q2^{-qs}\|f\|_{L^\infty}\|Da\|_{B^{s-1}_{2,r}}.
\end{equation}
Putting together inequalities \eqref{eq:com1}--\eqref{eq:com5}
completes the proof.
\end{proof}


\section{Dispersive estimates}

This section is devoted to  the proof of  Proposition
\ref{p:dispersive}. 
We  first symmetrize system \eqref{eq:linearized}
by introducing the new  functions 
$$
c=(1-\eps^2\Delta)^{\frac12}b\quad\hbox{and}\quad d=(-\Delta)^{-\frac12}\div v, 
$$
and set  $F=(1-\eps^2\Delta)^{\frac12}f$ and $G=(-\Delta)^{-\frac12}\div g.$
If we restrict ourselves  to solutions $(b,v)$ such that $v$
is a gradient,  then system \eqref{eq:linearized} translates into
\begin{equation}\label{eq:cd}
\frac d{dt}\begin{pmatrix}c\\d\end{pmatrix}
=\begin{pmatrix} 0&-\eps^{-1}(-2\Delta)^{\frac12}\,(1-\eps^2\Delta)^{\frac12}\\
\eps^{-1}(-2\Delta)^{\frac12}\,(1-\eps^2\Delta)^{\frac12}&0\end{pmatrix}
\begin{pmatrix}c\\d\end{pmatrix}+
\begin{pmatrix}F\\G\end{pmatrix}.
\end{equation}
We recover our original  system \eqref{eq:linearized}  using  the inverse transformation  $b=(1-\eps^2\Delta)^{-\frac12}c$ and 
$v=-\nabla(-\Delta)^{-\frac12} d$. 
For $\eps>0$,  we  are hence led to consider the unitary group 
$(V_\eps(t))_{t\in\R}$
 on $L^2(\R^N)$  for the  infinitesimal generator
 $\mathcal I_\eps= i\eps^{-1}(-2\Delta)^{\frac12}(1-\eps^2\Delta)^{\frac12}$. As already  suggested in the introduction, when $\eps$ is large  $V_\eps$ behaves as the Schr\"odinger equation, whereas when $\eps$ is small, it behaves as  part of the wave system with speed $\sqrt{2}\slash \eps$. For this resaon, we introduce the slowed  operator 
 $$U_\eps(t)=V_\eps(\tfrac{\eps} {\sqrt{2}}t), $$
which should therefore behave as the wave operator of speed $1$.

\medbreak
The main ingredient for  the proof of the Strichartz estimates provided in  Proposition \ref{p:dispersive}  are the following uniform bounds.
\begin{lem}\label{l:dispersive}
Let $R_2>R_1>0$ and $a\in L^1(\R^N)$ such 
that $\Supp\hat a \subset\{\xi\in\R^N\,/\, R_1\leq|\xi|\leq R_2\}.$
 There exist two positive constants $\eps_1$  and  $C$
depending only on $N,R_1,R_2$ 
and such that  for all $t>0$ and $\eps\geq\eps_1,$
we have
\begin{equation}\label{eq:dispersive2}
\|V_\eps(t)a)\|_{L^\infty}\leq Ct^{-\frac{N}2}\|a\|_{L^1}.
\end{equation}
For all $\eps_0$ there exists a constant 
$C=C(\eps_0,N,R_1,R_2)$ such that for all $t>0$ and $\eps\in(0,\eps_0],$
we have
\begin{equation}\label{eq:dispersive1}
\|U_\eps(t)a)\|_{L^\infty}\leq Ct^{\frac{1-N}2}\|a\|_{L^1}.
\end{equation}
\end{lem}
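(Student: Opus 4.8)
The plan is to reduce both inequalities to pointwise bounds on convolution kernels and then to invoke a version of the stationary phase lemma that is uniform with respect to the parameter $\eps$. First I would pick a radial function $\psi\in\mathcal C_c^\infty(\R^N)$ supported in $\{\xi\in\R^N\,:\,R_1/2\le|\xi|\le 2R_2\}$ with $\psi\equiv1$ on $\{\xi\in\R^N\,:\,R_1\le|\xi|\le R_2\}$. Since $\Supp\hat a$ lies in the latter annulus we have $\hat a=\psi\hat a$, so that $V_\eps(t)a=K_\eps(t,\cdot)\star a$ and $U_\eps(t)a=\widetilde K_\eps(t,\cdot)\star a$ with
$$
K_\eps(t,x)=(2\pi)^{-N}\!\int_{\R^N}e^{i(x\cdot\xi+t\Phi_\eps(\xi))}\psi(\xi)\,d\xi,\qquad
\widetilde K_\eps(t,x)=(2\pi)^{-N}\!\int_{\R^N}e^{i(x\cdot\xi+t\widetilde\Phi_\eps(\xi))}\psi(\xi)\,d\xi,
$$
where $\Phi_\eps(\xi)=\sqrt2\,\eps^{-1}|\xi|\sqrt{1+\eps^2|\xi|^2}$ is the symbol for which $\widehat{V_\eps(t)a}=e^{it\Phi_\eps}\hat a$ (in view of the definition of $\mathcal I_\eps$) and $\widetilde\Phi_\eps(\xi)=\frac{\eps}{\sqrt2}\Phi_\eps(\xi)=|\xi|\sqrt{1+\eps^2|\xi|^2}$ is the corresponding symbol for $U_\eps(t)=V_\eps(\frac{\eps}{\sqrt2}t)$. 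By Young's inequality it is then enough to prove $\|K_\eps(t)\|_{L^\infty}\le C t^{-N/2}$ for $\eps\ge\eps_1$ and $\|\widetilde K_\eps(t)\|_{L^\infty}\le C t^{(1-N)/2}$ for $\eps\le\eps_0$. For $0<t\le1$ both follow from the crude bound $\|K_\eps(t)\|_{L^\infty}+\|\widetilde K_\eps(t)\|_{L^\infty}\le(2\pi)^{-N}\|\psi\|_{L^1}$, so the real point is the regime $t\ge1$.

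For the bound on $V_\eps$ (large $\eps$), I would prove that on $\Supp\psi$ the phase $\Phi_\eps$ is a small $C^M$ perturbation of the Schr\"odinger phase $\sqrt2\,|\xi|^2$, uniformly in $\eps\ge\eps_1$. Writing $\Phi_\eps(\xi)=\sqrt2\,|\xi|^2\,h(\eps^2|\xi|^2)$ with $h(\sigma)=\sqrt{1+\sigma^{-1}}$, one uses that $h$ is smooth on $[1,\infty)$ with $h^{(j)}(\sigma)=O(\sigma^{-1-j})$ as $\sigma\to+\infty$; a direct computation via the Fa\`a di Bruno formula then shows $\|h(\eps^2|\cdot|^2)-1\|_{C^M(\Supp\psi)}=O(\eps^{-2})$ — the decay of the derivatives of $h$ exactly offsetting the powers of $\eps^2$ produced by differentiating $\eps^2|\xi|^2$ — provided $\eps_1$ is large enough that $\eps^2|\xi|^2\ge1$ on $\Supp\psi$. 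Hence $\|\Phi_\eps-\sqrt2\,|\cdot|^2\|_{C^M(\Supp\psi)}=O(\eps^{-2})$, and since $D^2(\sqrt2\,|\xi|^2)=2\sqrt2\,\mathrm{Id}$ is invertible, after possibly enlarging $\eps_1$ (depending only on $N,R_1,R_2$) we get $|\det D^2\Phi_\eps|\ge c>0$ on $\Supp\psi$ together with $\eps$-uniform bounds on finitely many derivatives of $\Phi_\eps$. The classical stationary phase estimate for oscillatory integrals with nondegenerate phase — the part of $\Supp\psi$ on which $x+t\nabla\Phi_\eps$ stays away from $0$ being treated by repeated integration by parts — whose constant depends only on $N$, on the lower bound for $|\det D^2\Phi_\eps|$, and on finitely many $C^M$ norms of $\Phi_\eps$ and $\psi$, then yields $\|K_\eps(t)\|_{L^\infty}\le C t^{-N/2}$ uniformly in $x$ and $\eps\ge\eps_1$, which is \eqref{eq:dispersive2}.

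For the bound on $U_\eps$ (bounded $\eps$), I would use that $\widetilde\Phi_\eps$ is radial, $\widetilde\Phi_\eps(\xi)=f_\eps(|\xi|)$ with $f_\eps(r)=r\sqrt{1+\eps^2r^2}$, whose Hessian reads
$$
D^2\widetilde\Phi_\eps(\xi)=f_\eps''(|\xi|)\,\frac{\xi\otimes\xi}{|\xi|^2}+\frac{f_\eps'(|\xi|)}{|\xi|}\Bigl(\mathrm{Id}-\frac{\xi\otimes\xi}{|\xi|^2}\Bigr).
$$
Since $f_\eps'(r)=(1+2\eps^2r^2)(1+\eps^2r^2)^{-1/2}>0$, the map $(\xi,\eps)\mapsto f_\eps'(|\xi|)/|\xi|$ is continuous and positive on the compact set $\Supp\psi\times[0,\eps_0]$, hence bounded below by some $c=c(\eps_0,N,R_1,R_2)>0$; therefore $D^2\widetilde\Phi_\eps$ has at least $N-1$ eigenvalues $\ge c$ on $\Supp\psi$, uniformly for $\eps\le\eps_0$, and finitely many of its derivatives are bounded there uniformly in $\eps\le\eps_0$. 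The stationary phase lemma in its form for phases whose Hessian has rank at least $N-1$ (the unique possibly degenerate direction being the radial one) then gives $\|\widetilde K_\eps(t)\|_{L^\infty}\le C t^{-(N-1)/2}$ for $t\ge1$, uniformly in $x$ and $\eps\le\eps_0$, with $C=C(\eps_0,N,R_1,R_2)$; combined with the $t\le1$ bound this is \eqref{eq:dispersive1}.

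The crux of the argument is the uniformity in $\eps$: one must ensure that the Hessian of $\Phi_\eps$ (resp.\ $\widetilde\Phi_\eps$) is uniformly nondegenerate (resp.\ of uniformly maximal rank $N-1$) on the fixed annulus, and that the finitely many derivatives entering the constant of the stationary phase lemma are controlled independently of $\eps$. For $V_\eps$ this rests on the Fa\`a di Bruno estimate sketched above, whose delicate point is that differentiating $h(\eps^2|\xi|^2)$ preserves the $O(\eps^{-2})$ gain; for $U_\eps$ it reduces to an elementary compactness argument on $\Supp\psi\times[0,\eps_0]$. Everything else is standard oscillatory-integral machinery (see e.g.\ \cite{BCD}).
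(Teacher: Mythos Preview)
Your argument is correct and follows a somewhat different route than the paper's. Both begin identically, reducing to pointwise bounds on the kernels $K_\eps$ and $\widetilde K_\eps$ via a fixed annular cutoff; the divergence is in how the oscillatory integrals are estimated. The paper does not analyze the Hessian of the phase. Instead, it invokes a single lemma (stated as Lemma~\ref{l:phase} in the appendix) that bounds $\int e^{itA}\psi$ by $t^{-k}$ plus an integral of $(1+t|\nabla A|^2)^{-k'}$ over the near-critical set $\{|\nabla A|\le1\}$. For the wave-like kernel it then computes $\nabla A_x^\eps$ explicitly, splits $\xi$ into its component along $x$ and the orthogonal part $\xi'_x$, observes $|\nabla A_x^\eps|\ge c|\xi'_x|$ uniformly in $\eps\le\eps_0$, and integrates in $\xi'_x$ to obtain $t^{(1-N)/2}$. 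For the Schr\"odinger-like kernel it shows that for $\eps\ge\eps_1$ the map $\xi\mapsto\sqrt t\,(x+\xi_xR_\eps(\xi)+\xi'_x)$ is a diffeomorphism on the annulus with Jacobian $\gtrsim t^{N/2}$, and changes variables. Your approach is to read off the decay rates directly from the rank of $D^2\Phi_\eps$ and $D^2\widetilde\Phi_\eps$ via classical stationary phase: writing $\Phi_\eps$ as an $O(\eps^{-2})$ perturbation in $C^M$ of $\sqrt2\,|\xi|^2$ gives a uniformly nondegenerate Hessian and hence $t^{-N/2}$, while the radial form of $\widetilde\Phi_\eps$ with $f_\eps'>0$ forces rank $\ge N-1$ and hence $t^{-(N-1)/2}$. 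The paper's method is more elementary and self-contained (only gradient bounds are used, no Hessian asymptotics), whereas yours is more conceptual in that it identifies the two regimes with the Schr\"odinger and wave dispersion relations at the level of the Hessian. Both handle uniformity in $\eps$ by the same mechanism you isolate: compactness of $\Supp\psi\times[0,\eps_0]$ in the low-frequency case, and the $O(\eps^{-2})$ gain from the decay of $h$ and its derivatives in the high-frequency case.
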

{\it Proof}.
Fix some function $\phi\in{\mathcal C}^\infty_c(\R^N)$ supported 
in $\{R_1/2\leq|\xi|\leq 2R_2\}$ and with value $1$ on
$\{R_1\leq|\xi|\leq R_2\}.$
In view of  to the assumption  on $\Supp\hat a,$  we  may write
$$
U_\eps(t)a= (2\pi)^{-N}L_\eps(t)\star a\quad\hbox{and}\quad
V_\eps(t)a=(2\pi)^{-N} H_\eps(\sqrt 2\, t)\star a
$$
where we have set 
$$
L_\eps(t,x):=\!\int_{\R^N}e^{i(x\cdot\xi+t|\xi|\sqrt{1+(\eps|\xi|)^2})}\phi(\xi)\,d\xi
\!\quad\hbox{and}\!\quad
H_\eps(t,x):=\!\int_{\R^N}e^{i(x\cdot\xi+t|\xi|^2\sqrt{1+(\eps|\xi|)^{-2}})}\phi(\xi)\,d\xi.
$$
In order to prove the lemma, it suffices  therefore to establish  that
for all $\eps_0>0$ there  exists a constant $C$ such 
that for all  $t>0,$ we have
\begin{equation}\label{eq:dispersive3}
\|L_\eps(t)\|_{L^\infty}\leq Ct^{\frac{1-N}2}\quad\hbox{if}\quad
\eps\leq\eps_0,
\end{equation} 
and that there exists $\eps_1>0$ and a constant $C'$ such that
for all $t>0,$
\begin{equation}\label{eq:dispersive4}
\|H_\eps(t)\|_{L^\infty}\leq C't^{-\frac N2}\quad\hbox{if}\quad
\eps\geq\eps_1.
\end{equation}
As a matter if fact, inequalities \eqref{eq:dispersive3} and \eqref{eq:dispersive4} are derived from 
 the stationary and nonstationary phase theorems. 
The basic result that we shall invoke (see the proof in
e.g. \cite{BCD}, Chap. 8) reads.
 \begin{lem}\label{l:phase}
Let $K$ be a compact subset of $\R^N$ and $\psi$ be   a smooth function  supported
in $K.$ Let  $A$ be a real-valued smooth function defined
on some  neighborhood of $K.$ Set $$
I(t):=\int_{\R^N} e^{itA(\xi)}\psi(\xi)\,d\xi.
$$
For all couple $(k,k')$ of positive real numbers,  
there exists a constant $C$
depending only on $k,k'$ and on (a finite number of) derivatives of $A$ and $\psi$
such that for all $t>0,$
$$
|I(t)|\leq C\biggl(t^{-k}+\int_{\{\xi\in K\,/\,|\nabla A(\xi)|\leq1\}}
(1+t|\nabla A(\xi)|^2)^{-k'}\,d\xi\biggr).
$$
\end{lem}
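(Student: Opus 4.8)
\emph{Plan of proof.} The plan is to localise according to the size of $\nabla A$, playing off a crude bound near the critical set of $A$ against repeated non-stationary phase away from it. First I would reduce to $t\ge1$: for $0<t\le1$ the trivial inequality $|I(t)|\le\|\psi\|_{L^1}$ already gives $|I(t)|\le Ct^{-k}$. Setting $M:=1+\max_K|\nabla A|$, I would then build, for each fixed $t$, a smooth partition of unity adapted to the level sets of the smooth function $|\nabla A|^2$: applying a standard dyadic partition of unity of $(0,\infty)$ to $|\nabla A|^2$ yields functions $\phi_{\mathrm c}$ and $(\phi_j)_j$ with $\phi_{\mathrm c}+\sum_j\phi_j=1$ on $K$, where $\Supp\phi_{\mathrm c}\subset\{|\nabla A|^2\le4/t\}$, each $\phi_j$ is supported in the shell $S_j:=\{\mu_j/2\le|\nabla A|\le2\mu_j\}$ with $\mu_j:=2^{-j}$, and the sum runs over the finitely many $j$ with $\mu_j\le2M$ and $\mu_j^2\ge1/t$. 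The key point is that, thanks to the boundedness of the derivatives of $A$ on $K$ and the chain rule, these cut-offs obey the symbol bounds $|\nabla^\alpha\phi_j|\le C_\alpha\mu_j^{-|\alpha|}$ — using only that $|\nabla A|\le2\mu_j$ on $\Supp\phi_j$, and \emph{no} nondegeneracy of the Hessian of $A$.

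For the core piece I would estimate trivially, bounding $\bigl|\int e^{itA}\psi\phi_{\mathrm c}\,d\xi\bigr|$ by $\|\psi\|_{L^\infty}\,\bigl|\{\xi\in K:|\nabla A(\xi)|^2\le4/t\}\bigr|$; since $(1+t|\nabla A|^2)^{-k'}\ge5^{-k'}$ on that set, and since $\{|\nabla A|^2\le4/t\}\subset\{|\nabla A|\le1\}$ once $t\ge4$, this is $\le C\int_{\{|\nabla A|\le1\}}(1+t|\nabla A|^2)^{-k'}\,d\xi$ (the range $1\le t\le4$ being handled trivially). For each shell piece I would integrate by parts with the vector field $Lu=(it)^{-1}|\nabla A|^{-2}\,\nabla A\cdot\nabla u$, which satisfies $Le^{itA}=e^{itA}$, so that $\int e^{itA}\psi\phi_j\,d\xi=\int e^{itA}\,({}^{t}L)^m(\psi\phi_j)\,d\xi$ for every $m\in\N$, with ${}^{t}Lu=-\div\!\bigl((it)^{-1}|\nabla A|^{-2}\nabla A\,u\bigr)$. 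On $\Supp\phi_j$ one has $|\nabla A|\sim\mu_j$ and $\bigl|\nabla^\alpha(|\nabla A|^{-2}\nabla A)\bigr|\le C_\alpha\mu_j^{-1-|\alpha|}$, and combining this with the symbol bounds on $\phi_j$ I would prove, by induction on $m$, that $\bigl|({}^{t}L)^m(\psi\phi_j)\bigr|\le C_m\,(t\mu_j^2)^{-m}\,\mathbf 1_{\Supp\phi_j}$, hence $\bigl|\int e^{itA}\psi\phi_j\,d\xi\bigr|\le C_m(t\mu_j^2)^{-m}\,|S_j\cap K|$.

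It would then remain to sum over $j$, with $m$ fixed so that $m\ge k$ and $m>k'$. The $j$ with $\mu_j>1/2$ — a finite set whose size is bounded in terms of $M$ — satisfy $(t\mu_j^2)^{-m}\le(t/4)^{-m}\le 4^mt^{-k}$ and so contribute at most $C\,t^{-k}$. For the remaining $j$ one has $\mu_j\le1/2$, so $S_j\subset\{|\nabla A|\le1\}$, and, by construction, $t\mu_j^2\ge1$; then $(t\mu_j^2)^{-m}\le(t\mu_j^2)^{-k'}$, while $|\nabla A|\ge\mu_j/2$ on $S_j$ forces $(1+t|\nabla A|^2)^{-k'}\ge c\,(t\mu_j^2)^{-k'}$ there, so $(t\mu_j^2)^{-m}|S_j\cap K|\le C\int_{S_j}(1+t|\nabla A|^2)^{-k'}\,d\xi$. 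Since the shells $S_j$ have bounded overlap, summing gives $\le C\int_{\{|\nabla A|\le1\}}(1+t|\nabla A|^2)^{-k'}\,d\xi$; adding the core contribution yields the claimed bound, with $C$ depending only on $k$, $k'$, $N$ and finitely many derivatives of $A$ and $\psi$ on $K$.

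I expect the main obstacle to be the induction $\bigl|({}^{t}L)^m(\psi\phi_j)\bigr|\le C_m(t\mu_j^2)^{-m}$: one has to expand $({}^{t}L)^m(\psi\phi_j)$ as a finite sum of products of derivatives of the amplitude $|\nabla A|^{-2}\nabla A$ and of $\psi\phi_j$, and verify that in every term the total number of lost factors $\mu_j^{-1}$ is exactly compensated by the $m$ gained factors $t^{-1}\mu_j^{-1}$ — this is precisely where it matters that $|\nabla A|^{-2}\nabla A$ behaves on $S_j$ like a symbol of order $-1$ at scale $\mu_j$, regardless of any degeneracy of $\nabla^2A$. The remaining ingredients — constructing the partition of unity with the stated symbol estimates, counting the bounded overlap of the shells, and comparing $(t\mu_j^2)^{-m}$ with the weight $(1+t|\nabla A|^2)^{-k'}$ — should all be routine.
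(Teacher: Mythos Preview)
The paper does not actually prove this lemma; it simply refers the reader to \cite{BCD}, Chapter~8. Your argument --- dyadic localisation in the size of $|\nabla A|$, a trivial volume bound on the core $\{|\nabla A|^2\lesssim t^{-1}\}$, and $m$-fold integration by parts with $L=(it)^{-1}|\nabla A|^{-2}\nabla A\cdot\nabla$ on each shell --- is precisely the standard proof and is correct. The symbol bounds $|\nabla^\alpha\phi_j|\le C_\alpha\mu_j^{-|\alpha|}$ and $|\nabla^\alpha(|\nabla A|^{-2}\nabla A)|\le C_\alpha\mu_j^{-1-|\alpha|}$ on $S_j$ follow from Fa\`a di Bruno and the observation that every differentiation of $|\nabla A|^2$ costs at most one power of $\mu_j$; from these the induction $\bigl|\nabla^\alpha({}^tL)^m(\psi\phi_j)\bigr|\le C_{m,\alpha}(t\mu_j^2)^{-m}\mu_j^{-|\alpha|}$ goes through exactly as you outline, and the final dyadic summation and comparison with $(1+t|\nabla A|^2)^{-k'}$ are routine.
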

\begin{proof}[Proof of Lemma \ref{l:dispersive} completed]
We first turn to inequality \eqref{eq:dispersive3}.
We notice that for any $x\in\R^N$ and $t>0,$ we have
$$
L_\eps(t,tx)=\int_{\R^N}e^{it(x\cdot\xi+|\xi|\sqrt{1+\eps^2|\xi|^2})}
\phi(\xi)\,d\xi.
$$
According to  Lemma \ref{l:phase}, we thus have for some constant $C$
depending  only on $N$ and~$\phi,$ 
\begin{equation}\label{eq:dispersive5}
|L_\eps(t,tx)|\leq C\biggl(t^{\frac{1-N}2}
+\int_{{\mathcal C}_x^\eps}\Bigl(1+t|\nabla A_x^\eps(\xi)|^2\Bigr)^{-N}\,d\xi\biggr) \end{equation}
where we have set 
$$
A_x^\eps(\xi):=x\cdot\xi+|\xi|\sqrt{1+\eps^2|\xi|^2}\quad\hbox{and}\quad
{\mathcal C}_x^\eps:=\bigl\{\xi\in\Supp\phi\,/\,  |\nabla A_x(\xi)|\leq1\bigr\}.
$$
We compute
$$
\nabla
A_x^\eps(\xi)=x+\biggl(\frac{1+2(\eps|\xi|)^2}{|\xi|\sqrt{1+(\eps|\xi|)^2}}\biggr)\xi.
$$
We may assume without  no loss of generality that $x\not=0,$ 
and decompose $\xi$ into
$$
\xi=\xi_x+\xi'_x\quad\hbox{where }\quad
\xi_x:=\Big ( \xi .  \frac{x}{|x|}\Big )\frac{x}{|x|},
$$
 so that we obtain,  for some positive constant $c$ depending only on  $\eps_0,$
$$
|\nabla A_x^\eps(\xi)|\geq c|\xi_x'|
\quad\hbox{for all}\quad \xi\in\Supp\phi\ \hbox{ and }\ \eps\in(0,\eps_0].
$$Plugging this inequality into  \eqref{eq:dispersive5}, one ends up with 
$$|L_\eps(t,tx)|\leq C\biggl(t^{\frac{1-N}2}
+\int_{-2R_2}^{2R_2}\int_{\R^{N-1}}\Bigl(1+t|\xi_x'|^2\Bigr)^{-N}
\,d\xi'_x\,d\xi_x\biggr).
$$ 
The change of variable $\eta'=\sqrt{t}\:\xi'_x$ finally 
yields \eqref{eq:dispersive3}. 
\smallbreak
For  the proof of  inequality \eqref{eq:dispersive4}, 
we use the fact that 
$$
H_\eps(t,tx)=\int_{\R^N} e^{it(x\cdot\xi+|\xi|^2\sqrt{1+(\eps|\xi|)^{-2}})}
\phi(\xi)\,d\xi.
$$
Using  Lemma \ref{l:phase}once more, we obtain that 
\begin{equation}\label{eq:dispersive6}
|H_\eps(t,tx)|\leq C\biggl(t^{-\frac N2}
+\int_{{\mathcal D}_x^\eps}\Bigl(1+t|\nabla
B_x^\eps(\xi)|^2\Bigr)^{-N}\,d\xi\biggr) \end{equation}
where we have set 
$$
B_x^\eps(\xi):=x\cdot\xi+|\xi|^2\sqrt{1+(\eps|\xi|)^{-2}}
\quad\hbox{and}\quad
{\mathcal D}_x^\eps:=\bigl\{\xi\in\Supp\phi\,/\,  |\nabla B_x(\xi)|\leq1\bigr\}.
$$
we write
$$
\nabla
B_x^\eps(\xi)=x+R_\eps(\xi)\xi\quad\hbox{where}\quad
R_\eps(\xi):=\frac{2+(\eps|\xi|)^{-2}}{\sqrt{1+(\eps|\xi|)^{-2}}}\cdotp
$$
Decomposing $\xi$ into $\xi=\xi_x+\xi'_x$ as before, and
using the fact that the integration is restricted 
to the set of   $R_1/2\leq|\xi|\leq2R_2,$ Lemma 
\ref{l:phase}  implies that  if $\eps\geq\eps_1>0$
then we have
$$
|H_\eps(t,tx)|\leq C\biggl(t^{-\frac N2}
+\int_{\frac{R_1}2<|\xi_x|<2 R_2}\int_{\R^{N-1}}
\Bigl(1+t\bigl(|\xi'_x|^2+(x+2\xi_x
R_\eps(\xi))^2\bigr)\Bigr)^{-N}\,d\xi'_x\,d\xi_x\biggr)  
$$
for some constant $C$ depending only on $\eps_1,N.$
If $\eps_1$ is  assumed to be sufficiently large,  then 
for all $\eps\geq\eps_1$ the map
$$
\Phi^\eps_x:\xi\longmapsto \sqrt t\bigl(x+\xi_x R_\eps(\xi)+\xi'_x\bigr)
$$
is a diffeomorphism from $\Omega:=\{\xi\in\R^N\,/\, R_1/2<|\xi|<2R_2\}$
 to $\Phi^\eps_x(\Omega)$  and that the jacobian of $\Phi_x^\eps$ is bounded
by below by $\alpha t^{N/2}$ for some $\alpha>0$
independent of $\eps.$
Making the change of variable $\eta=\Phi(\xi)$ in the above integral, 
we derive inequality \eqref{eq:dispersive4}.
\end{proof}

The above lemma will enable us  to prove 
Strichartz estimates  for the one-parameter unitary
group $(V_\eps(t))_{t\in\R}.$
Before we state  these estimates,  we recall the definition of 
 wave or Schr\"odinger admissible couples.
\begin{defi}
A couple of numbers  $(p,r)\in[2,\infty]^2$ is  said to be 
\begin{itemize} 
\item wave admissible if 
$$
\frac 1p+\frac{N-1}{2r}=\frac{N-1}4
\quad\hbox{and}\quad (p,r,N)\not=(2,\infty,3),
$$
\item Schr\"odinger admissible if 
$$
\frac 1p+\frac{N}{2r}=\frac{N}4
\quad\hbox{and}\quad (p,r,N)\not=(2,\infty,2).
$$
\end{itemize}
\end{defi}

If $p\geq 1$, we denote by $p'$ its H\"older conjugate exponent. As a consequence of Lemma \ref{l:dispersive} we have
\begin{cor}\label{c:strichartz}
Let $(p,r)$ and  $(p_1,r_1)$ be in $[2,\infty]^2,$  
 $a_0\in L^2(\R^N)$ and $f\in L^{p'_1}([0,T];L^{r'_1}(\R^N)).$
Assume in addition that $\hat a_0$ and  $\hat f(t,\cdot)$
are supported in $\{\xi\in\R^N\,/\,R_1\leq|\xi|\leq R_2\}.$

i) There exists $\eps_1=\eps_1(N,R_1,R_2)$  and a constant $C$ (independent of $T$)
such that if $(p,r)$ and $(p_1,r_1)$ are Schr\"odinger admissible then 
for all $\eps\geq\eps_1,$ 
$$\begin{array}{rcl}
\|V_\eps(t) a_0\|_{L_T^p(L^r)}&\leq& C\|u_0\|_{L^2},\\[1ex]
\Bigl\|\Int_0^t V_\eps(t-t') f(t')\,dt'\Bigr\|_{L_T^{p}(L^{r})}
&\leq& C\|f\|_{L_T^{p'_1}(L^{r'_1})}.
\end{array}
$$
ii) For all $\eps_0$ there exists a constant $C$ (independent of $T$) such 
that if $(p,r)$ and $(p_1,r_1)$ are wave admissible then 
for all $\eps\in(0,\eps_0],$ 
$$\begin{array}{rcl}
\|V_\eps(t) a_0\|_{L_T^p(L^r)}&\leq& C\eps^{\frac1p}\|u_0\|_{L^2},\\[1ex]
\Bigl\|\Int_0^t V_\eps(t-t') f(t')\,dt'\Bigr\|_{L_T^{p}(L^{r})}
&\leq& C\eps^{\frac1{p}+\frac1{p_1}} \|f\|_{L_T^{p'_1}(L^{r'_1})}.
\end{array}
$$

\end{cor}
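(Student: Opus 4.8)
The plan is to deduce the corollary from the abstract Strichartz ($TT^*$) machinery --- in its non-endpoint form as in \cite{GV}, and in the endpoint case as in \cite{BCD}, Chap.~8 --- fed with two ingredients already at our disposal. First, both $(V_\eps(t))_{t\in\R}$ and the slowed group $(U_\eps(t))_{t\in\R}=(V_\eps(\tfrac{\eps}{\sqrt{2}}t))_{t\in\R}$ are \emph{unitary} on $L^2(\R^N)$, so that $\|V_\eps(t)a\|_{L^2}=\|a\|_{L^2}=\|U_\eps(t)a\|_{L^2}$ and $V_\eps(t)V_\eps(s)^*=V_\eps(t-s)$, $U_\eps(t)U_\eps(s)^*=U_\eps(t-s)$. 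Second, the dispersive decay estimates of Lemma~\ref{l:dispersive}. Recall that the abstract result asserts: if a family $(W(t))_{t\in\R}$ satisfies $\|W(t)a\|_{L^2}\leq C\|a\|_{L^2}$ for all $t$ and $\|W(t)W(s)^*a\|_{L^\infty}\leq C|t-s|^{-\sigma}\|a\|_{L^1}$ for all $t\neq s$ and a fixed $\sigma>0,$ then the homogeneous and the retarded inhomogeneous Strichartz estimates hold for every pair of \emph{$\sigma$-admissible} exponents, that is $(p,r)\in[2,\infty]^2$ with $\tfrac1p+\tfrac{\sigma}{r}=\tfrac{\sigma}{2}$ and $(p,r,\sigma)\neq(2,\infty,1),$ with a constant depending only on $\sigma$ and on $C$ (in particular independent of the length of the time interval).

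For part i) I would apply this to $W(t)=V_\eps(t)$ with $\eps\geq\eps_1.$ Since $V_\eps(t)$ is a Fourier multiplier it commutes with the spectral truncation onto $\{R_1\leq|\xi|\leq R_2\},$ so Lemma~\ref{l:dispersive} (namely \eqref{eq:dispersive2}) provides the dispersive bound for $V_\eps(t)V_\eps(s)^*=V_\eps(t-s)$ acting on data with that spectral localization --- which is exactly what the $TT^*$ argument requires here --- and yields $\|V_\eps(t)V_\eps(s)^*a\|_{L^\infty}\leq C|t-s|^{-N/2}\|a\|_{L^1}$ uniformly in $\eps\geq\eps_1.$ Taking $\sigma=N/2,$ the condition $\tfrac1p+\tfrac{N}{2r}=\tfrac N4$ together with the exclusion of $(p,r)=(2,\infty)$ when $N=2$ is precisely the Schr\"odinger admissibility condition, so the abstract theorem directly gives both inequalities of i) with a constant independent of $T$ and of $\eps\geq\eps_1.$

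For part ii) I would instead apply the abstract result to the slowed group $W(t)=U_\eps(t)$ with $\eps\in(0,\eps_0],$ using \eqref{eq:dispersive1}, which reads $\|U_\eps(t)U_\eps(s)^*a\|_{L^\infty}\leq C|t-s|^{-(N-1)/2}\|a\|_{L^1}$ uniformly in $\eps\in(0,\eps_0].$ With $\sigma=(N-1)/2,$ the condition $\tfrac1p+\tfrac{N-1}{2r}=\tfrac{N-1}4$ and the exclusion of $(p,r)=(2,\infty)$ when $N=3$ are exactly the wave admissibility conditions, so for all wave-admissible $(p,r)$ and $(p_1,r_1)$ one obtains, uniformly in $T$ and $\eps\in(0,\eps_0],$
\[
\|U_\eps(t)a_0\|_{L_T^p(L^r)}\leq C\|a_0\|_{L^2},\qquad
\Bigl\|\Int_0^t U_\eps(t-t')f(t')\,dt'\Bigr\|_{L_T^p(L^r)}\leq C\|f\|_{L_T^{p_1'}(L^{r_1'})}.
\]
It then remains to transfer these bounds to $V_\eps$ through the identity $V_\eps(t)=U_\eps(\tfrac{\sqrt{2}}{\eps}t).$ The change of variable $\tau=\tfrac{\sqrt{2}}{\eps}t$ gives $\|V_\eps(\cdot)a_0\|_{L_T^p(L^r)}=(\tfrac{\eps}{\sqrt{2}})^{1/p}\|U_\eps(\cdot)a_0\|_{L_{T'}^p(L^r)}$ with $T'=\tfrac{\sqrt{2}}{\eps}T,$ whence the factor $\eps^{1/p};$ performing the same substitution in both time variables of the Duhamel term --- which factors a $\tfrac{\eps}{\sqrt{2}}$ out of the convolution --- and then converting the $L^{p_1'}$ norm of $f(\tfrac{\eps}{\sqrt{2}}\,\cdot)$ back to that of $f$ produces the power $\tfrac1p+1-\tfrac1{p_1'}=\tfrac1p+\tfrac1{p_1},$ which is the announced $\eps^{1/p+1/p_1}.$

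I do not expect a serious obstacle once Lemma~\ref{l:dispersive} is available. The two points requiring care are: checking that the constants furnished by the abstract theorem are genuinely uniform over the relevant range of $\eps$ --- which is automatic, since they depend only on $\sigma$ and on the dispersive constant, both uniform by Lemma~\ref{l:dispersive} --- and correctly bookkeeping the powers of $\eps$ picked up from the time rescaling in part ii). Let me also stress that the frequency-localization hypothesis is essential rather than cosmetic: it is precisely what places us in the two regimes (large $\eps$ with bounded frequencies for $V_\eps,$ small $\eps$ with bounded frequencies for $U_\eps$) in which the dispersive estimates of Lemma~\ref{l:dispersive} actually hold.
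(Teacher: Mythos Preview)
Your argument is correct and matches the paper's own proof essentially line for line: feed the unitarity of $V_\eps$ and $U_\eps$ together with the dispersive bounds of Lemma~\ref{l:dispersive} into the abstract Strichartz machinery with $\sigma=N/2$ and $\sigma=(N-1)/2$ respectively, and then rescale time to pass from $U_\eps$ back to $V_\eps$ and collect the powers of~$\eps$. The only cosmetic difference is that the paper invokes Keel--Tao \cite{KT} for the abstract result whereas you cite \cite{GV} and \cite{BCD}; the content is the same.
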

\begin{proof} It follows from Lemma \ref{l:dispersive} and the fact that $V_\eps$   and $U_\eps$ are unitary  operators on $L^2(\R^N)$  that the assumptions of  the main result in \cite{KT} are met \footnote{For the choices $\sigma=\frac{N}{2} $ for $V_\eps$ and $\sigma=\frac{N-1}{2} $ for $U_\eps$, $\sigma$ being a parameter entering in the statement of \cite{KT}.}.  The conclusion of \cite{KT} yields i) for $V_\eps$ and $\eps\geq \eps_1.$ For statement ii), it suffices to rephrase the conclusion of \cite{KT} for $U_\eps$ in terms of $V_\eps$,  since
$$
V_\eps(t)a_0=U_\eps\Bigl(\textstyle{\frac{\sqrt2}{\eps}t}\Bigr)a_0\quad\hbox{and}\quad
\Int_0^t V_\eps(t-t') f(t')\,dt'=\frac\eps{\sqrt2}
\int_0^{\frac{\sqrt2}\eps\,t} U_\eps\Bigl(\textstyle{\frac\eps{\sqrt2}}t-\tau\Bigr)
f\Bigl(\textstyle{\frac\eps{\sqrt2}}\tau\Bigr)\,d\tau.\qedhere
$$
\end{proof}
\begin{lem}\label{l:dispersive2}
Let $(c,d)$ satisfy system $\eqref{eq:cd}$
with real-valued initial datum  $(c_0,d_0)$ and source terms $(F,G).$ 

i) For all $\eps_0>0$ and all wave admissible couples
$(p,r)$ and $(p_1,r_1)$ there exists a constant $C$ such 
that for all $q\in\Z$ and $\eps>0$ such that $2^q\eps\leq\eps_0$
we have
$$\displaylines{
2^{q\frac Nr}\|(\dq c,\dq d)\|_{L_T^p(L^r)}
\leq C\Bigl(\eps^{\frac1p}2^{q(\frac N2-\frac1p)}
\|(\dq c_0,\dq d_0)\|_{L^2}\hfill\cr\hfill 
+\eps^{\frac1p+\frac1{p_1}}
2^{q(\frac N{r'_1}-\frac1p-\frac1{p_1})}\|(\dq F,\dq G)\|_{L_T^{p'_1}(L^{r'_1})}\Bigr).}
$$

ii) There exists a constant $C$ such 
that for all $q\in\Z$ and $\eps>0$ such that $2^q\eps\geq\eps_1,$
and  all Schr\"odinger admissible couples
$(p,r)$ and $(p_1,r_1),$ 
we have
$$\!\!\!\!\!\!
2^{q\frac Nr}\|(\dq c,\dq d)\|_{L_T^p(L^r)}
\!\leq\! C\Bigl(2^{q(\frac N2\!-\!\frac2p)}
\|(\dq c_0,\dq d_0)\|_{L^2}+
2^{q(\frac N{r'_1}\!-\!\frac2p\!-\!\frac2{p_1})}\|(\dq F,\dq G)\|_{L_T^{p'_1}(L^{r'_1})}\Bigr).
$$
\end{lem}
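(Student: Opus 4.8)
The plan is to deduce the dyadically localized estimates from the fixed--frequency Strichartz estimates of Corollary \ref{c:strichartz} by a parabolic rescaling tuned to the symbol of $\mathcal I_\eps$. Write $m_\eps(\xi):=\sqrt2\,\eps^{-1}|\xi|\sqrt{1+\eps^2|\xi|^2}$ for the (real, even) symbol of $\mathcal I_\eps/i$. The elementary identity at the heart of the reduction is that, for every $q\in\Z$,
$$
m_\eps(2^q\xi)=2^{2q}\,m_{2^q\eps}(\xi).
$$
Consequently, setting $\delta:=2^q\eps$, if $(c,d)$ solves \eqref{eq:cd} on $[0,T]$ with data $(\dq c_0,\dq d_0)$ and source $(\dq F,\dq G)$, then the rescaled functions
$$
(\tilde c,\tilde d)(y,\tau):=(\dq c,\dq d)(2^{-q}y,2^{-2q}\tau),\qquad
(\tilde F,\tilde G)(y,\tau):=2^{-2q}(\dq F,\dq G)(2^{-q}y,2^{-2q}\tau)
$$
solve system \eqref{eq:cd} on $[0,2^{2q}T]$ with parameter $\delta$ in place of $\eps$, and with data and source now spectrally supported in the \emph{fixed} annulus $C(0,3/4,8/3)=\Supp\varphi$, i.e. independently of $q$. (For $q=-1$ the relevant spectral set is the ball $B(0,4/3)=\Supp\chi$, and one invokes the ball--supported variant of the stationary phase estimate; this case is straightforward and we omit it.)

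Next, since $\mathcal I_\eps$ is a Fourier multiplier with purely imaginary even symbol, each component of the solution of \eqref{eq:cd} is a linear combination of $V_\eps(t)$ and $\overline{V_\eps(t)g}=V_\eps(-t)\bar g$ applied to the datum, plus the corresponding Duhamel integrals; by time reversal both operators satisfy the bounds of Corollary \ref{c:strichartz}, so it suffices to apply that corollary to the single group $V_\delta$ acting on the rescaled, frequency--one localized data. If $\delta=2^q\eps\le\eps_0$ we invoke part ii) of Corollary \ref{c:strichartz} for wave admissible couples $(p,r)$, $(p_1,r_1)$, which yields
$$
\|(\tilde c,\tilde d)\|_{L^p_{2^{2q}T}(L^r)}\le C\Bigl(\delta^{\frac1p}\|(\tilde c_0,\tilde d_0)\|_{L^2}
+\delta^{\frac1p+\frac1{p_1}}\|(\tilde F,\tilde G)\|_{L^{p_1'}_{2^{2q}T}(L^{r_1'})}\Bigr);
$$
if $\delta=2^q\eps\ge\eps_1$ we invoke part i) for Schr\"odinger admissible couples, giving the same inequality with the powers of $\delta$ removed.

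It then remains to undo the rescaling and collect powers of $2^q$. A direct change of variables gives
$$
2^{q\frac Nr}\|(\dq c,\dq d)\|_{L^p_T(L^r)}=2^{-\frac{2q}p}\|(\tilde c,\tilde d)\|_{L^p_{2^{2q}T}(L^r)},\qquad
\|(\tilde c_0,\tilde d_0)\|_{L^2}=2^{q\frac N2}\|(\dq c_0,\dq d_0)\|_{L^2},
$$
$$
\|(\tilde F,\tilde G)\|_{L^{p_1'}_{2^{2q}T}(L^{r_1'})}=2^{q\bigl(\frac N{r_1'}+\frac2{p_1'}-2\bigr)}\|(\dq F,\dq G)\|_{L^{p_1'}_T(L^{r_1'})}.
$$
Inserting the estimate for $(\tilde c,\tilde d)$, substituting $\delta=2^q\eps$ and simplifying with $1-\tfrac1{p_1}=\tfrac1{p_1'}$, the exponents combine into exactly the bounds asserted in i) (low frequencies, wave admissible couples) and ii) (high frequencies, Schr\"odinger admissible couples). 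The only genuinely delicate point is this final bookkeeping: although $V_\eps$ behaves like a wave group at low frequencies while the rescaling used is the \emph{parabolic} one in both regimes, the resulting mismatch of homogeneities is entirely absorbed into the factor $\delta=2^q\eps$ produced by Corollary \ref{c:strichartz}, so that the powers of $2^q$ reassemble precisely as stated.
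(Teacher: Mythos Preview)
Your proof is correct and follows essentially the same route as the paper: a parabolic rescaling $(t,x)\mapsto(2^{-2q}t,2^{-q}x)$ reduces the $\dq$--localized problem to a fixed annulus with parameter $\delta=2^q\eps$, after which Corollary~\ref{c:strichartz} applies directly. Your treatment is in fact slightly more explicit than the paper's, which leaves the final ``going back to the initial variables'' step to the reader; your scaling identities for the $L^p_T(L^r)$ norms and the verification that the exponents of $2^q$ recombine correctly are exactly what is needed there.
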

\begin{proof}
Since  the data are real-valued, we have the identities
$$
\begin{array}{lll}
c(t)&\!\!=\!\!&\Re\bigl(V_\eps(t)c_0\bigr)-\Im\bigl(V_\eps(t)d_0\bigr)
+\Re\Int_0^tV_\eps(t-t')F(t')\,dt'-\Im\Int_0^t V_\eps(t-t')G(t')\,dt',\\[1ex]
d(t)&\!\!=\!\!&\Im\bigl(V_\eps(t)c_0\bigr)+\Re\bigl(V_\eps(t)d_0\bigr)
+\Im\Int_0^tV_\eps(t-t')F(t')\,dt'+\Re\Int_0^t V_\eps(t-t')G(t')\,dt.\end{array}
$$
Therefore,  we introduce the functions
$$
(\tilde c_q,\tilde d_q)(t,x):=(\dq c,\dq d)(2^{-2q}t,2^{-q}x)\!\!\quad\hbox{and}\!\!\quad
(\tilde F_q,\tilde G_q)(t,x):=2^{-2q}(\dq F,\dq G)(2^{-2q}t,2^{-q}x)
$$
so that $\tilde c_q,$ $\tilde d_q,$ $\tilde F_q$ and $\tilde G_q$
are  spectrally supported in $\{3/4\leq|\xi|\leq 8/3\},$
and  we have
$$\displaylines{
\tilde c_q(t)=\Re\bigl(V_{2^q\eps}(t)\tilde c_q(0)\bigr)-\Im\bigl(V_{2^q\eps}(t)\tilde d_q(0)\bigr)
\hfill\cr\hfill+\Re\Int_0^tV_{2^q\eps}(t-t')\tilde F_q(t')\,dt'
-\Im\Int_0^t V_{2^q\eps}(t-t')\tilde G_q(t')\,dt',}
$$
$$\displaylines{
\tilde d_q(t)=\Im\bigl(V_{2^q\eps}(t)\tilde c_q(0)\bigr)
+\Re\bigl(V_{2^q\eps}(t)\tilde d_q(0)\bigr)
\hfill\cr\hfill
+\Im\Int_0^tV_{2^q\eps}(t-t')\tilde F_q(t')\,dt'+\Re\Int_0^tV_{2^q\eps}(t-t')\tilde G_q(t')\,dt.}
$$
Next we fix some $\eps_0>0$.  Applying the first part of
Corollary \ref{c:strichartz}, we derive that for all 
wave admissible couples $(p,r)$ and $(p_1,r_1),$ and $\eps\in(0,\eps_0],$ we have
$$\displaylines{
\|(\tilde c_q,\tilde d_q)\|_{L_T^p(L^r)}
\leq C\Bigl((\eps2^q)^{\frac1p}
\|(\tilde c_q(0),\tilde d_q(0))\|_{L^2}+(\eps2^q)^{\frac1p+\frac1{p_1}}
\|(\tilde F_q,\tilde G_q)\|_{L_T^{p'_1}(L^{r'_1})}\Bigr).}
$$
Going  back to the initial variables, we  obtain  the desired
estimate for $(\dq c,\dq d).$

 The proof  of  the  inequality in the high-frequency regime
goes along the same lines: for this case, we use  instead  of the first part  the second part of Corollary \ref{c:strichartz}. 
\end{proof}

\begin{proof}[Proof of Proposition  \ref{p:dispersive} completed]
With Lemma \ref{l:dispersive2}  at our
disposal,  we  complete the proof of Proposition \ref{p:dispersive}.
Indeed, fix some smooth cut-off function $\chi$ 
with compact support and value $1$ on $B(0,\frac43\eps_1)$
and denote $z_\ell:=\chi(\eps^{-1}D)z$ and $z_h:=z-z_\ell$
 for any tempered distribution $z.$
 Owing to the spectral properties of $z_\ell$ and $z_h,$ 
 there exists  some $\eps_0>\eps_1$ such that 
 \begin{equation}\label{eq:dispersive7}
 \dq z_\ell=0\quad\hbox{for}\quad 2^q\eps>\eps_0\quad\hbox{and}
 \quad \dq z_h=0\ \hbox{ for }\ 2^q\eps<\eps_1.
 \end{equation}
 Let $(b,v)$ satisfy system \eqref{eq:linearized}.
 By virtue of \eqref{eq:dispersive6} and of  Bernstein inequality, 
 one may write for all $r\in[1,\infty],$
 \begin{equation}\label{eq:dispersive8}
 \|(b_\ell,v_\ell)\|_{L^\infty}\leq \sum_{2^q\eps\leq\eps_0}
 \|(\dq b_\ell,\dq v_\ell)\|_{L^\infty}
\leq C \sum_{2^q\eps\leq\eps_0}
 2^{q\frac Nr}\|(\dq b_\ell,\dq v_\ell)\|_{L^r}.
 \end{equation}
 Notice that as $\nabla|D|^{-1}$ and $|D|^{-1}\div$ are homogeneous
 multipliers of degree $0,$ we have (see e.g. Lemma 2.2 in \cite{BCD})
  \begin{equation}\label{eq:dispersive9}
   \|\dq v_\ell\|_{L^r}\approx \|\dq d_\ell\|_{L^r}.
   \end{equation}
  Next, we have $b_\ell=(1-\eps^2\Delta)^{\frac12}c_\ell$ and it is not difficult to 
 show that $A_\eps(D):=(1-\eps^2\Delta)^{\frac12}$ 
 and its inverse $ A_\eps^{-1}$
  are $S^0$-multipliers uniformly for $\eps\leq\eps_0$:
  for every  $k\in\N,$ there exists a constant $C_k$ such 
  that for every     $\eps\leq\eps_0$ and $\xi\in\R^N,$ we have
  $$
  |D^k A_\eps^{\pm1}(\xi)|\leq C_k(1+|\xi|^2)^{-k/2}.
  $$
  Therefore, a classical result    
   (see e.g. Lemma 2.2 in \cite{BCD}) ensures that 
   there exists a constant $C=C(N)$ such that
   for all $q\in\Z,$  $r\in[1,+\infty]$ and tempered distribution $z$
   we have
   \begin{equation}\label{eq:dispersive10}
   \|\dq(1-\eps^2\Delta)^{\pm\frac12}z\|_{L^r}\leq C\|\dq z\|_{L^r}
   \quad\hbox{for all }\ \eps\in[0,\eps_0].
   \end{equation}
   Combining these inequalities with \eqref{eq:dispersive8}, 
   we deduce that
    \begin{equation}\label{eq:dispersive11}
 \|(b_\ell,v_\ell)\|_{L^\infty}\leq C\sum_{2^q\eps\leq\eps_0}
    2^{q\frac Nr}\|(\dq c_\ell,\dq d_\ell)\|_{L^r}.
 \end{equation}
  Let us  consider first the case  $N\geq4.$ In this case,  we apply the first part of
Lemma \ref{l:dispersive2}
  with the wave admissible couples $(p,r):=(2,2(N-1)/(N-3)$ and $(p_1,r_1):=(\infty,2)$
   to deduce that
  $$
   \|(b_\ell,v_\ell)\|_{L^2_T(L^\infty)}\leq C\eps^{\frac12}\sum_{2^q\eps\leq\eps_0}
  2^{q(\frac N2-\frac12)}\Bigl(\|(\dq c_0,\dq d_0)_\ell\|_{L^2}
  +\|(\dq F,\dq G)_\ell\|_{L^1_T(L^2)}\Bigr).
  $$
  In order to  bound the r. h.s in terms of  the functions $b_0,$ $v_0$ $f$ and $g$, we invoke
   \eqref{eq:dispersive9}
  and \eqref{eq:dispersive10}. We end up with 
   \begin{equation}\label{eq:dispersive12}
   \|(b_\ell,v_\ell)\|_{L^2_T(L^\infty)}\leq C\eps^{\frac12}\sum_{2^q\eps\leq\eps_0}
  2^{q(\frac N2-\frac12)}\Bigl(\|(\dq b_0,\dq v_0)_\ell\|_{L^2}
  +\|(\dq f,\dq g)_\ell\|_{L^1_T(L^2)}\Bigr).
  \end{equation}
  Using similar arguments,  we get 
   $$
  \|(\nabla b,\nabla v)_\ell\|_{L^2_T(L^\infty)}\leq C\eps^{\frac12}\sum_{2^q\eps\leq\eps_0}
  2^{q(\frac N2-\frac12)}\Bigl(\|(\dq\nabla  b_0,\dq\nabla v_0)_\ell\|_{L^2}
  +\|(\dq\nabla f,\dq \nabla g)_\ell\|_{L^1_T(L^2)}\Bigr).
  $$
 Combining this latter inequality with \eqref{eq:dispersive12} and  using Bernstein inequality and the definition of 
the norm in $B^{\frac N2+\frac12}_{2,1},$  we  conclude that 
if $N\geq4$ then 
$$
\|(b,v)_\ell\|_{L^2_T(C^{0,1})}\leq C\eps^{\frac12}
\Bigl(\|(b_0,v_0)_\ell\|_{B^{\frac N2+\frac12}_{2,1}}+
\|(f,g)_\ell\|_{L_T^1(B^{\frac N2+\frac12}_{2,1})}\Bigr).
$$
If $N=3,$ the proof is almost the same except
that the endpoint couple $(2,\infty)$ is not admissible. 
However, we may  take  any couple $(p,r)$ 
with $1/p+1/r=1/2,$ and (as before) $(p_1,r_1)=(\infty,2).$
Applying Lemma \ref{l:dispersive2}, we get 
after a few computations, 
$$
\|(b,v)_\ell\|_{L^p_T(C^{0,1})}\leq C\eps^{\frac1p}
\Bigl(\|(b_0,v_0)_\ell\|_{B^{\frac52-\frac1p}_{2,1}}+
\|(f,g)_\ell\|_{L_T^1(B^{\frac 52-\frac1p}_{2,1})}\Bigr).
$$
Finally, in the case $N=2,$ one can take $(p,r)=(4,\infty)$
and  $(p_1,r_1)=(\infty,2).$ We end up with 
$$
\|(b,v)_\ell\|_{L^4_T(C^{0,1})}\leq C\eps^{\frac14}
\Bigl(\|(b_0,v_0)_\ell\|_{B^{\frac74}_{2,1}}+
\|(f,g)_\ell\|_{L_T^1(B^{\frac74}_{2,1})}\Bigr).
$$
  The part of Proposition \ref{p:dispersive} pertaining to the high frequencies
  of the solution may be proved exactly along the same lines.
  It suffices to apply the second part of Lemma \ref{l:dispersive2}.  
  The details are left to the reader.

\end{proof}

\end{document}